\documentclass[10pt,a4paper]{article}
\usepackage[utf8]{inputenc}
\usepackage{amsmath}
\usepackage{amsfonts}
\usepackage{amssymb}
\usepackage{amsthm}
\usepackage{mathtools}
\usepackage{enumitem}
\usepackage{hyperref}

\title{Bound on the number of Ruelle resonances for Gevrey hyperbolic flows}
\author{Malo J\'ez\'equel\thanks{CNRS, Univ. Brest, UMR6205, Laboratoire de Math{\'e}matiques de Bretagne Atlantique, France. email: malo.jezequel@math.cnrs.fr}}
\date{}

\newtheorem{theorem}{Theorem}
\newtheorem{lemma}{Lemma}[section]
\newtheorem{proposition}[lemma]{Proposition}
\theoremstyle{definition}
\newtheorem{definition}[lemma]{Definition}
\theoremstyle{remark}
\newtheorem{remark}[lemma]{Remark}

\newcommand{\set}[1]{\left\{#1\right\}}
\newcommand{\n}[1]{\left\| #1 \right\|}
\newcommand{\st}{\textup{st}}
\newcommand{\un}{\textup{un}}

\newcommand{\brac}[1]{\left\langle #1 \right\rangle}

\DeclareMathOperator{\tr}{tr}
\DeclareMathOperator{\re}{Re}
\DeclareMathOperator{\Un}{Un}

\DeclareMathOperator{\Fix}{Fix}
\DeclareMathOperator{\length}{length}
\DeclareMathOperator{\supp}{supp}
\DeclareMathOperator{\res}{Res}

\begin{document}

\maketitle

\begin{abstract}
We improve the best known upper bounds on the number of Ruelle resonances in disks of large radius for Gevrey uniformly hyperbolic flows. The proof is based on Rugh's approach of dynamical determinants \cite{rugh_axiom_a} that replaces the study of the flow itself by the analysis of a system of open hyperbolic maps.
\end{abstract}

\section{Introduction}

Smooth uniformly hyperbolic flows on compact manifolds are called \emph{Anosov}. This class of flows has been extensively studied (in particular for geometric motivation: the geodesic flow on the unit tangent bundle of a compact Riemannian manifold of negative sectional curvature is Anosov) and is well-known for its chaotic properties. For instance, if $\phi = (\phi_t)_{t \in \mathbb{R}}$ is a $C^2$ Anosov flow on a smooth manifold $M$ that preserves a smooth volume form $\mathrm{d}x$ and whose stable and unstable foliations are not jointly integrable then it is \emph{mixing} \cite{sinai_geodesic,anosov_sinai}: for every $f,g \in L^2(\mathrm{d}x)$ we have
\begin{equation}\label{eq:mixing}
\int_{M} (f \circ \phi_t) g \mathrm{d}x \underset{t \to + \infty}{\to} \int_M f \mathrm{d}x \int_M g \mathrm{d}x.
\end{equation}
It is then natural to wonder if the convergence \eqref{eq:mixing} may be upgraded into an asymptotic expansion. Some answer to this question is given by the theory of \emph{Ruelle resonances}. If $\phi = ( \phi_t)_{t \in \mathbb{R}}$ is now a $C^\infty$ Anosov flow on a smooth manifold $M$ and $\mathrm{d}x$ a smooth volume form on $M$, then there is a discrete subset $\res(\phi)$ of $\mathbb{C}$ such that for every $f,g \in C^\infty(M)$ we have
\begin{equation}\label{eq:expansion_resonances}
\int_M (f \circ \phi_t) g \mathrm{d}x \underset{t \to + \infty}{\approx} \sum_{\lambda \in \res(\phi)} P_{\lambda,f,g}(t) e^{\lambda t},
\end{equation}
where the $P_{\lambda,f,g}$ are polynomials, see \cite{butterley_liverani, butterley_liverani_corrigendum, faure_sjostrand}. Beware that the sign $\approx$ has a very weak sense in \eqref{eq:expansion_resonances}: we just mean that the Laplace transform of the left hand side has a meromorphic extension to $\mathbb{C}$ whose poles are contained in $\res(\phi)$. Proving that the approximation \eqref{eq:expansion_resonances} holds in a stronger sense is a difficult question (see \cite{faure_tsujii_band_structure}), which is related to the problem of establishing a rate of mixing for Anosov flows (see e.g. \cite{liverani_contact}).

The goal of this article is to understand the distribution of \emph{Ruelle resonances} (the elements of the set $\res(\phi)$ from \eqref{eq:expansion_resonances}). More precisely, we care about the growth when $r$ goes to $+ \infty$ of the counting function:
\begin{equation}\label{eq:counting_function}
N(r) = \# \set{\lambda \in \res(\phi) : |\lambda| \leq r}.
\end{equation}
Understanding the asymptotic of the counting function is an old problem in scattering theory \cite{regge_scattering}. The problem that we consider here is an analogue in the context of hyperbolic dynamics. In order to study the growth of \eqref{eq:counting_function}, it is interesting to introduce the \emph{dynamical determinant} defined for $\re z \gg 1$ by 
\begin{equation}\label{eq:determinant_simple}
d(z) = \exp\left( - \sum_{\gamma \in \mathfrak{P}} \frac{T_\gamma^{\#}}{T_\gamma}\frac{e^{- z T_\gamma}}{|\det(I - \mathcal{P}_\gamma)|} \right).
\end{equation}
Here, $\mathfrak{P}$ denotes the set of periodic orbits\footnote{We define a periodic orbit for a flow $\phi$ as a finite length flow line for $\phi$ whose endpoints coincide. We identify two periodic orbits if they have the same image and the same length.} of $\phi$ and if $\gamma \in \mathfrak{P}$, we write $T_\gamma$ for its length, $T_\gamma^\#$ for its primitive length and $\mathcal{P}_\gamma$ for the associated linearized Poincaré map\footnote{See \S \ref{subsection:hyperbolic_flows} for a precise definition of these objects.}. The dynamical determinant $d(z)$ has a holomorphic continuation to $\mathbb{C}$ whose zeroes are exactly the Ruelle resonances \cite{glp_zeta,dyatlov_zworski_zeta, dyatlov_guillarmou_open_systems,dyatlov_guillarmou_afterword}. Hence, according to Jensen's formula, any bound on $|d(z)|$ when $|z|$ goes to $+ \infty$ implies a bound on $N(r)$ when $r$ goes to $+ \infty$ (and the converse is not that far from being true).

Some counter-examples \cite{jezequel_trace, tao_counting} suggest that there is no general upper bound on \eqref{eq:counting_function} in the asymptotic $r \to + \infty$. However, one can get an upper bound on the counting function by imposing a regularity assumption on $\phi$ (stronger than $C^\infty$). Indeed, for a real-analytic Anosov flow the bound $N(r) \underset{r \to + \infty}{=} \mathcal{O}(r^n)$, where $n$ is the dimension of the manifold the flow is acting on, follows from the work of Rugh and Fried \cite{rugh_correlation_spectrum, rugh_axiom_a, fried_zeta, fried_zeta2}. A bound on the number of resonances for a class of ultradifferentiable Anosov flows is given in \cite{jezequel_ultradifferentiable}. In \cite{asterisque}, it is proven that if $\phi$ is a $s$-Gevrey\footnote{For some $s \geq 1$. See \S \ref{subsection:Gevrey} for the definition of Gevrey regularity.} Anosov flow on a manifold of dimension $n$ then $N(r) \underset{r \to + \infty}{=} \mathcal{O}(r^{ns})$. Our first result is an improvement of this bound.

\begin{theorem}\label{theorem:resonances_simple}
If $s > 1$ and $\phi$ is a $s$-Gevrey Anosov flow on a manifold of dimension $n$, then $N(r) \underset{r \to + \infty}{=} \mathcal{O}(r^{1 + (n-1)s}).$
\end{theorem}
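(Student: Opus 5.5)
By Jensen's formula, Theorem~\ref{theorem:resonances_simple} follows from a growth bound on the dynamical determinant:
\[
\log |d(z)| \leq C \langle z\rangle^{1+(n-1)s}.
\]
Since $d(0)$ may vanish, we apply Jensen to $d(z + z_0)$ with $\re z_0 \gg 1$. To get this bound I will follow Rugh's strategy, as announced in the abstract.

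\textbf{Step 1: reduction to open maps.} Choose a finite family of small Gevrey transversal hypersurfaces (Poincaré sections) $\Sigma_1,\dots,\Sigma_m$ of dimension $n-1$, covering the flow in the sense that every orbit hits them with return times in $[\tau_0,\tau_1]$. The Poincaré return maps $P_{ij}$ form a system of open hyperbolic maps between pieces of the sections, and they are $s$-Gevrey. The return times $\tau_{ij}$ are also $s$-Gevrey. Periodic orbits of $\phi$ then correspond to periodic orbits of this system, with $T_\gamma$ equal to the sum of the return times. This lets us write $d(z)$ as a Fredholm determinant $\det(I - \mathcal{L}_z)$, where $\mathcal{L}_z$ is the transfer operator
\[
\mathcal{L}_z u = e^{-z\tau}\, u\circ P^{-1}
\]
(suitably interpreted with local inverses and cutoffs), acting on a space of functions on the union of the sections. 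The factors $\det(I-\mathcal{P}_\gamma)$ and $T^\#_\gamma/T_\gamma$ work out through the usual trace-formula bookkeeping, so that $\log d(z) = -\sum_k \frac1k \tr^\flat \mathcal{L}_z^k$.

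\textbf{Step 2: the key estimate.} The crucial point is that the operator lives in dimension $n-1$, while $z$ enters only through the scalar weight $e^{-z\tau}$. The weight has size $e^{|z|\,\mathcal{O}(1)}$, and its derivatives grow like $|z|^k$ times Gevrey factors. Hyperbolicity contracts the relevant directions, so we use anisotropic Gevrey spaces on the sections, in the spirit of \cite{asterisque} but in dimension $n-1$. On these spaces I will prove singular value bounds for $\mathcal{L}_z$ of the form
\[
\sigma_k(\mathcal{L}_z) \leq C e^{C|z|}\exp\!\left(-c\, k^{1/((n-1)s)}\right).
\]
This is the Gevrey analogue of the exponential decay $\exp(-ck^{1/(n-1)})$ in Rugh's analytic setting. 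The estimate must come from approximating $\mathcal{L}_z$ by finite-rank operators built from Gevrey-class cutoffs in phase space. At frequency scale $R$, the symbol of $e^{-z\tau}$ stays controlled only up to losses like $\exp(C|z|)$, so $\mathcal{L}_z$ composed with a projection onto frequencies $\gtrsim R$ has norm at most
\[
e^{C|z|}\, e^{-cR^{1/s}}.
\]
The corresponding rank is about $R^{n-1}$, in dimension $n-1$.

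\textbf{Step 3: conclusion.} Using $\log|\det(I-\mathcal{L}_z)| \leq \sum_k \log(1+\sigma_k)$, the only contributing $k$ are those with $k^{1/((n-1)s)} \lesssim |z|$, that is $k \lesssim |z|^{(n-1)s}$. Each such term contributes $\mathcal{O}(|z|)$, which gives
\[
\log|d(z)| = \mathcal{O}\!\left(|z|^{1+(n-1)s}\right).
\]
Jensen's formula then yields $N(r) = \mathcal{O}(r^{1+(n-1)s})$.

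The main obstacle is Step 2. Two difficulties must be handled there: the Gevrey anisotropic space has to be adapted to a system of open maps with boundaries, and the cutoffs are not analytic, so they must be Gevrey of the same class without destroying the trace identity. Here $s>1$ helps, since Gevrey partitions of unity exist, which is why the theorem assumes $s>1$. The singular value bound must also be uniform in $z$ with only an $e^{C|z|}$ loss. My first attempt will be an FBI/Bargmann-type transform on the sections with an escape function of order $|\xi|^{1/s}$. I will bound the conjugated operator by a multiplication by $e^{-c|\xi|^{1/s}}$ modulo errors, and estimate singular values via a min-max comparison with that multiplication operator.
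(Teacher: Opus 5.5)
\textbf{The gap is in Step~1.} Your Steps~2 and~3 are essentially what the paper does. It builds anisotropic Gevrey spaces on $(n-1)$-dimensional disks with escape function $|\xi_{\st}|^{1/s}-|\xi_{\un}|^{1/s}$ and gets decay $e^{C|z|-C^{-1}|\ell|^{1/s}}$ per frequency $\ell\in\mathbb{Z}^{n-1}$, hence $e^{C|z|-C^{-1}m^{1/(s(n-1))}}$ after relabelling. Your Weyl-type bound $\prod(1+\sigma_k)$ is a fine substitute for the paper's Grothendieck expansion with Hadamard's inequality.

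The problem is the identity $d(z)=\det(I-\mathcal{L}_z)$, which you attribute to ``the usual trace-formula bookkeeping''.
\begin{itemize}
\item The exact correspondence between periodic orbits of $\phi$ and of a union of sections holds only for the \emph{discontinuous} first-return map, and Step~2 cannot be run on it.
\item Once you pass to smooth branches with smooth cutoffs, as Step~2 requires, $\tr(\mathcal{L}_z^m)$ weights each fixed point of an $m$-fold composition by a product of cutoff values. For a general family of sections these weights do not add up to $1$ on each periodic orbit.
\item The paper gets well-defined Gevrey branches $F_{\alpha,\beta}$, with $\chi_{\alpha,\beta}\equiv 1$ near $R_{\alpha,\beta}$, from a Markov partition whose disks are made Gevrey (Lemma~\ref{lemma:gevreyification}). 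It also shows that the only fixed points in the supports are genuine periodic points, using Lemma~\ref{lemma:useful_later}, expansivity and Lemma~\ref{lemma:better_partition}.
\item Even then a single determinant is wrong. The trace counts each admissible periodic itinerary once, and periodic orbits through rectangle boundaries have several itineraries, so they are overcounted.
\end{itemize}

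\textbf{The missing idea is Bowen's correction (after Manning).} One uses a finite family of subshifts $\Sigma_{\mathbf{k}}$ such that $\sum_{\mathbf{k}}(-1)^{\length(\mathbf{k})+1}|\rho_{\mathbf{k}}^{-1}(\set{\gamma})|=1$ for every $\gamma$. This gives $d=f/g$ with $f=\prod_{\length(\mathbf{k})\ \mathrm{odd}}d_{\mathbf{k}}$ and $g=\prod_{\length(\mathbf{k})\ \mathrm{even}}d_{\mathbf{k}}$. Each $d_{\mathbf{k}}$ is the Fredholm determinant of its own system of open Gevrey maps, and your Steps~2--3 apply to each one. This also changes your conclusion: you never obtain a growth bound on $d$ directly, so Jensen's formula must be applied to $f$, not to $d(z+z_0)$. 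Since $g\,d=f$ with $g\not\equiv 0$, every zero of $d$ is a zero of $f$ with at least the same multiplicity, which gives $N(r)=\mathcal{O}(r^{1+(n-1)s})$. Bounding $|d|$ itself, as in Theorem~\ref{theorem:determinant_simple}, needs the extra Weierstrass-factorization argument of Lemma~\ref{lemma:bound_entire_function}.

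A smaller slip: $u\mapsto e^{-z\tau}\,u\circ P^{-1}$ without a Jacobian has trace weights $|\det DP|/|\det(I-DP)|$ at fixed points. The paper instead uses $u\mapsto \chi\, e^{-z\tau}N\,(u\circ F)$, whose weights are $1/|\det(I-DF)|$.
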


Our second result is the dynamical determinant counterpart of Theorem \ref{theorem:resonances_simple}.

\begin{theorem}\label{theorem:determinant_simple}
If $s > 1$ and $\phi$ is a $s$-Gevrey Anosov flow on a manifold of dimension $n$, then there is a constant $C > 0$ such that for every $z \in \mathbb{C}$ we have
\begin{equation*}
|d(z)| \leq \begin{cases} C \exp(C|z|^{1 + (n-1)s}) & \textup{ if } 1 + (n-1)s \textup{ is not an integer, } \\ C \exp(C|z|^{1 + (n-1)s} \log(1+|z|)) & \textup{ if } 1 + (n-1)s \textup{ is an integer. }\end{cases}
\end{equation*}
\end{theorem}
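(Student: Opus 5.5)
Following Rugh's approach (as announced in the abstract), I will replace the flow by a finite system of open hyperbolic maps. First choose a finite family of small Gevrey local transversal sections $\Sigma_1,\dots,\Sigma_k$ of dimension $n-1$ such that every orbit of $\phi$ crosses some $\Sigma_i$ within bounded time. The associated Poincaré (first-return) maps $P_{ij}$ are defined on open subsets of $\Sigma_i$, are uniformly hyperbolic, and have return times $\tau_{ij}$; both are $s$-Gevrey, by the Gevrey implicit function theorem. Closed orbits of $\phi$ correspond bijectively (with the right multiplicities) to periodic itineraries of this system. The linearized Poincaré map $\mathcal{P}_\gamma$ coincides with the differential of the composed return map along the itinerary. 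Hence $d(z)$ equals the Fredholm determinant $\det(I-\mathcal{L}_z)$ of a matrix of weighted transfer operators
\[
\mathcal{L}_z u_j = \sum_i e^{-z\tau_{ij}}\, u_i\circ P_{ij}^{-1}\cdot(\text{Jacobian/indicator weights}),
\]
acting on a suitable Banach space $\mathcal{B}$ of anisotropic Gevrey ultradistributions on the $\Sigma_i$. The identity is first established for $\operatorname{Re} z\gg1$ via the trace formula $\operatorname{tr}\mathcal{L}_z^m=\sum$ over periodic itineraries, using a Lefschetz-type computation with $|\det(I-\mathcal{P}_\gamma)|$. It then extends by analytic continuation once $\mathcal{L}_z$ is shown to be a trace-class-type, entire family.

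The key gain over the earlier $\mathcal{O}(r^{ns})$ bound is that the dynamics is now $(n-1)$-dimensional and $z$ enters only through the scalar factor $e^{-z\tau_{ij}}$. I will bound singular values (or approximation numbers) $a_m(\mathcal{L}_z)$ and then use
\[
|\det(I-\mathcal{L}_z)|\le \prod_m (1+a_m(\mathcal{L}_z)).
\]
On Gevrey-$s$ anisotropic spaces in dimension $n-1$, a hyperbolic map with bounded weights has $a_m\lesssim \exp(-c\, m^{1/((n-1)s)})$; this is the Gevrey analogue of the analytic exponential decay, obtained by Fourier/FBI localization at frequency $\xi$ with Gevrey cutoffs. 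The weight $e^{-z\tau}$ is itself Gevrey-$s$ in the base variables. Its $\alpha$-th derivatives grow like $|z|^{|\alpha|}$ times Gevrey factors, so the effective loss is $|e^{-z\tau}|\le e^{C|z|}$, together with a frequency shift. Concretely, I expect an estimate of the form
\[
a_m(\mathcal{L}_z)\le C e^{C|z|}\exp\bigl(-c\,m^{1/((n-1)s)}\bigr)
\]
when $m\gtrsim |z|^{(n-1)s}$, with the trivial bound $a_m\le Ce^{C|z|}$ otherwise. An alternative route uses a cutoff at frequency $\sim|z|^{s}$ per direction, beyond which Gevrey smoothing beats the $z$-dependence. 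Counting the modes below that cutoff gives about $(|z|^{s})^{n-1}$ of them, each contributing a factor $e^{C|z|}$. Summing $\log(1+a_m)$ then gives $\lesssim |z|\cdot |z|^{(n-1)s}=|z|^{1+(n-1)s}$ from the small-$m$ part and $O(1)$ from the tail.

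This yields Theorem \ref{theorem:determinant_simple} in the crude form $\exp(C|z|^{1+(n-1)s})$ with at most a logarithmic loss. I will track the borderline case by summation with a dyadic decomposition; the $\log(1+|z|)$ appears when the exponent $1+(n-1)s$ is an integer. The bound then follows from Jensen's formula, giving $N(r)=\mathcal{O}(r^{1+(n-1)s})$ (Theorem \ref{theorem:resonances_simple}), since the zeroes of $d$ are the resonances.

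The main obstacle is the singular value estimate with explicit, sharp $z$-dependence. It requires constructing Gevrey anisotropic spaces on the sections, where the hyperbolic maps are only Gevrey and not analytic, so Rugh's complex-analytic spaces are unavailable. It also requires showing that multiplying by $e^{-z\tau}$ costs only $e^{C|z|}$ per retained mode rather than an extra power of $|z|$ in each direction. I would first try an FBI/Bargmann transform with Gevrey escape function $\langle\xi\rangle^{1/s}$ in the stable/unstable splitting. There I would estimate the kernel of $\mathcal{L}_z$ by non-stationary phase in Gevrey classes, treating $z\tau$ as part of the phase, which shifts frequencies by $O(|z|)$.
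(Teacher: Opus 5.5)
\textbf{There is a genuine gap: the identity $d(z)=\det(I-\mathcal{L}_z)$ for a single operator is not available.} Your analytic half is essentially the paper's. You use Gevrey anisotropic spaces on $(n-1)$-dimensional transversals with escape function $|\xi_{\st}|^{1/s}-|\xi_{\un}|^{1/s}$ and a Gevrey non-stationary phase giving coefficients $\lesssim e^{C|z|-c\,m^{1/((n-1)s)}}$. Counting about $|z|^{(n-1)s}$ effective modes at cost $e^{C|z|}$ each is Lemma~\ref{lemma:abstract_determinant}.

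The problem is the combinatorial step you state in one sentence. You claim closed orbits of $\phi$ correspond bijectively, with the right multiplicities, to periodic itineraries of Poincar\'e maps between arbitrary local sections. That is false in general:
\begin{itemize}
\item Orbits cross several sections, and the return maps are only defined on pieces with uncontrolled boundaries.
\item Indicator weights are not allowed: multiplication by the indicator of a set with (typically fractal) boundary does not act on a Gevrey anisotropic space.
\item With smooth cutoffs, the trace counts every fixed point of the extended maps in the cutoff supports, weighted by a product of cutoff values. Nothing in your setup forces these weights to sum to the correct multiplicity, or excludes spurious fixed points.
\item Even with a Markov partition, the coding $\pi_A$ is not injective on rectangle boundaries, so periodic orbits through them are overcounted.
\end{itemize}

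\textbf{How the paper handles it.} It uses Bowen's version of Manning's inclusion--exclusion. For each auxiliary subshift $\Sigma_{\mathbf{k}}$ it builds a system of Gevrey open maps with Fredholm determinant $d_{\mathbf{k}}$, and uses $\sum_{\mathbf{k}}(-1)^{\length(\mathbf{k})+1}|\rho_{\mathbf{k}}^{-1}(\set{\gamma})|=1$. Making each individual trace formula exact also needs Proposition~\ref{proposition:expansivity} together with Lemmas~\ref{lemma:better_partition} and~\ref{lemma:useful_later}. These show that the only fixed point in the cutoff supports is the coded one, where all cutoffs equal $1$.

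The output is only $d=f/g$, with $f$ and $g$ products of the $d_{\mathbf{k}}$. Upper bounds on $|f|$ and $|g|$ do not bound $|f/g|$, since $g$ can be small. One needs that $d$ is entire, then Jensen's formula and Hadamard factorization (Lemma~\ref{lemma:bound_entire_function}). That step, not a dyadic summation inside the determinant estimate, is where $\log(1+|z|)$ enters when $1+(n-1)s$ is an integer. A single Fredholm determinant already obeys the bound with no logarithm, so your account of the log is itself a sign that the quotient step is missing.
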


Theorems \ref{theorem:resonances_simple} and \ref{theorem:determinant_simple} are simpler versions of more general statements (Theorems \ref{theorem:bound_resonances} and \ref{theorem:bound_determinant}). Notice that, in Theorems \ref{theorem:resonances_simple} and \ref{theorem:determinant_simple}, we excluded the case $s = 1$ (that corresponds to real-analytic regularity). It will be convenient for their proofs (for instance, there are Gevrey partitions of unity when $s > 1$). Moreover, these bounds are already known in the case $s = 1$, see \cite[Theorem 3.1 and Proposition 3.2]{asterisque}. 

\begin{remark}
The more general Theorems \ref{theorem:bound_resonances} and \ref{theorem:bound_determinant} deal with the case of basic sets for a hyperbolic flows. Another important class of flow is the class of Axiom A flow \cite{smale_differentiable}, for which a theory of Ruelle resonances is developed in \cite{meddane_morse}. The results in Theorem \ref{theorem:resonances_simple} and \ref{theorem:determinant_simple} are also valid for Gevrey Axiom A flow. Indeed, the nonwandering set for an Axiom A flow may be decomposed into a finite number of hyperbolic fixed point and a finite number of basic sets \cite[Part II, Theorem 5.2]{smale_differentiable}. The dynamical determinant for an Axiom A flow is then the product of the dynamical determinants associated to each of its basic sets (a similar decomposition for the zeta function of an Axiom A flow is used in \cite{dyatlov_guillarmou_afterword}, here we do not count fixed point as periodic orbits), and the analogues of Theorem \ref{theorem:determinant_simple} for Axiom A flows is then a consequence of Theorem \ref{theorem:bound_determinant}. Concerning the bound on the number of resonances, we know that the set of resonances for an Axiom A flow is the union of the sets of resonances for its basic sets and fixed points \cite[Proposition 5.11]{tao_counting}. Basic sets are dealt with using Theorem \ref{theorem:bound_resonances} and resonances corresponding to fixed points may be computed explicitly \cite[Proposition 6.5]{tao_counting} (they only contribute to the number of resonances of modulus less than $r$ by a $\mathcal{O}(r^n)$ as $r$ goes to $+ \infty$), proving that the analogue of Theorem \ref{theorem:resonances_simple} for Axiom A flows also holds.
\end{remark}

\begin{remark}
From Theorem \ref{theorem:determinant_simple}, we deduce in particular that the dynamical determinant $d(z)$ associated to a $s$-Gevrey Anosov flow has order less than $1 + (n-1)s$. It is standard that the finiteness of the order of the dynamical determinant implies a \emph{trace formula} \cite[Proposition 1.5]{jezequel_ultradifferentiable}: a distributional equality that relates the periodic orbits of the flow with the resonances.  The trace formula is proven in \cite{jezequel_ultradifferentiable} for Anosov flows in a class of regularity larger than Gevrey. However, this reference does not deal with the cases of basic sets or Axiom A flows. As a consequence of Theorem \ref{theorem:bound_determinant}, it is now established that the trace formula holds for Gevrey Axiom A flows and for basic sets of Gevrey hyperbolic flows. Notice however that this trace formula will only take into account resonances corresponding to basic sets (ignoring those coming from fixed points). We expect the trace formula to hold for Axiom A and basic sets in the same class of regularity as in the Anosov case. 
\end{remark}

\subsection*{Strategy of proof}

The modern method to study Ruelle resonances for a hyperbolic flow $\phi$ consists in designing a family $(\mathcal{H}^a)_{a > 0}$ of (Banach or Hilbert) \emph{spaces of anisotropic distributions} such that for each $a > 0$ the intersection of $\set{z \in \mathbb{C} : \re z > - a}$ with the spectrum of the generator $X$ of the flow $\phi$ (seen as a differential operator of order $1$) acting on $\mathcal{H}^a$ is made of isolated eigenvalues of finite multiplicity. These eigenvalues are the Ruelle resonances of $\phi$. This method has been first applied in the continuous-time case by Butterley and Liverani \cite{butterley_liverani,butterley_liverani_corrigendum}. Another standard reference is \cite{faure_sjostrand} and recent textbook treatment of this topic may be found in \cite{lefeuvre_book}. One may also refer to \cite{baladi_book} for the discrete-time case.

This method is ill-suited to the study of the counting function $N(r)$. Indeed, for a given $a > 0$, the space $\mathcal{H}^a$ only gives us access to the resonances of $\phi$ in the half-space $\set{z \in \mathbb{C} : \re z >- a}$, missing a significant part of the disk $\set{ z \in \mathbb{C} : |z| \leq r}$ when $r$ is large. However, under some extra regularity assumption on $\phi$, one may try to construct a single Hilbert space $\mathcal{H}$ on which $X$ has compact resolvent. In particular, the spectrum of $X$ on $\mathcal{H}$ is made of isolated eigenvalues of finite multiplicites (that coincide with Ruelle resonances if $\mathcal{H}$ is well-designed). By establishing that the singular values of the resolvent of $X$ decays at a certain rate, one naturally gets an upper bound on $N(r)$ when $r$ goes to $+ \infty$. A variation on this method is applied in \cite{jezequel_ultradifferentiable} to study trace formulae for a class of ultradifferentiable Anosov flows\footnote{See also \cite{jezequel_trace,jezequel_circle,jezequel_distribution} for a similar approach in the discrete-time case.}.

The strategy described above is implemented in the Gevrey setting in \cite{asterisque}, but yields the bound $N(r) \underset{r \to + \infty}{=} \mathcal{O}(r^{ns})$ instead of $N(r) \underset{r \to + \infty}{=} \mathcal{O}(r^{1 +(n-1)s})$. Let us say a few words about the strategy of \cite{asterisque} in order to explain why it misses the bound from Theorem \ref{theorem:resonances_simple}. Consider an Anosov flow $\phi$ on a manifold $M$ with generator $X$. To the differential operator $X$, one may associate a function $p$ on the cotangent bundle $T^* M$, explicitly defined by $p(x,\xi) = i \xi (X(x))$, the \emph{symbol} of $X$. Microlocal analysis suggests that the number of eigenvalues for $X$ of modulus less than $r >0$ should be controlled\footnote{In the case of an elliptic self-adjoint operator, a much sharper results, called \emph{Weyl law}, is expected. See for instance \cite[\S 6.4]{zworski_book}.} by the volume\footnote{Recall that there is a canonical volume form on $T^* M$.} of the sublevel set $\set{|p| \leq r}$. However, since $p$ vanishes on a hyperplane in each fiber of the cotangent bundle, one may check that the volume $|\set{|p| \leq r}|$ is actually infinite. This is a manifestation of the lack of ellipticity of the operator $X$. The plan from \cite{asterisque} to bypass this issue is to let $X$ act on a Hilbert space $\mathcal{H}$ (instead of $L^2(M)$ for instance), which effectively replaces $p$ by a new symbol $\tilde{p}$. The space $\mathcal{H}$ is designed to make $p$ non-zero by adding a negative term of order of magnitude $|\xi|^{\frac{1}{s}}$ near $p = 0$. Consequently, we could expect $|\set{|\tilde{p}| \leq r}| \lesssim r^{1 + (n-1)s}$, and thus $N(r) \lesssim r^{1 + (n-1)s}$. However, for technical reasons this bound is not achieved. Indeed, to prove that $\tilde{p}$ is large on a \emph{conical} neighbourhood of $\set{p = 0}$, we need to use the correction term of size $|\xi|^{\frac{1}{s}}$ only, which yields $|\set{|\tilde{p}| \leq r}| \lesssim r^{ns}$. Hence, we are not able with the tools of \cite{asterisque} to use as much as we would like the original ellipticity of the symbol $p$ outside of $\set{p = 0}$. Even if $p$ is elliptic of order $1$ outside of a hyperplane, we can only use that $\tilde{p}$ is elliptic of order $1/s$ in a conical neighbourhood of $\set{p = 0}$.

What is at stake here is to capture the specific behaviour of the flow in its own direction. We will do it by ``factoring out'' the direction of the flow. Concretely, we will study a family of maps induced by the flow between disks transverse to the flow. This is a method that goes back to the work of Rugh \cite{rugh_correlation_spectrum, rugh_axiom_a} and then Fried \cite{fried_zeta, fried_zeta2} in the analytic category. We have also used this method recently in the smooth category to study zeta functions associated to pseudo-Anosov flows in a work with Jonathan Zung \cite{jezequel_zung}. This strategy is based on the understanding of the dynamical determinant \eqref{eq:determinant_simple}. Indeed, we will not study the action of the flow or its generator on a space of distributions. Instead, we will produce a family of trace class operators on some Hilbert spaces such that \eqref{eq:determinant_simple} is a quotient of products of determinants of these operators. These trace class operators are transfer operators associated to family of open hyperbolic maps in the spirit of \cite{rugh_axiom_a}. We will deal with the combinatorics of periodic orbits for the flow using a Markov partition. However, it is essential that we do not work with the transfer operator associated to the subshift of finite type that codes the flow: we would lose our regularity assumption on the flow by doing so. Instead, we work with a family of Gevrey maps indexed by the vertices of the graph associated to the Markov partition. The main difference between the present paper and \cite{rugh_correlation_spectrum,rugh_axiom_a,fried_zeta,fried_zeta2,jezequel_zung} are the spaces on which we let the transfer operator associated to our family of maps act. Our spaces are adapted to the Gevrey regularity (as in \cite[Appendix B]{jezequel_distribution}), and could be interpreted as spaces of ultradistributions, while the spaces of \cite{rugh_correlation_spectrum,rugh_axiom_a,fried_zeta,fried_zeta2} are adapted to analytic regularity. In \cite{jezequel_zung}, we work the spaces adapted to finite regularity from \cite{baladi_tsujii}.

A drawback of the method used in this paper is that we have access only to the dynamical determinant associated to an Anosov flow. A priori, we cannot say anything about the statistical properties of the flow. This is only because we know that the resonances are the zeroes of a dynamical determinant that we are able to prove Theorem \ref{theorem:resonances_simple}.

\subsection*{Structure of the paper}

In \S \ref{section:preliminaries}, we recall some basic definitions and results about hyperbolic flows (\S \ref{subsection:hyperbolic_flows}) and Gevrey regularity (\S \ref{subsection:Gevrey}). We also state in \S \ref{subsection:results} more general versions of Theorems \ref{theorem:resonances_simple} and \ref{theorem:determinant_simple}: Theorems \ref{theorem:bound_resonances} and \ref{theorem:bound_determinant}. The rest of the paper is dedicated to their proofs.

In \S \ref{section:periodic_orbit}, we prove Theorems \ref{theorem:bound_resonances} and \ref{theorem:bound_determinant} in the case of a basic set made of a single periodic orbit. Indeed, the strategy based on Markov partition that we use to prove Theorems \ref{theorem:bound_resonances} and \ref{theorem:bound_determinant} does not apply in that case. However, resonances for a single periodic orbit may be computed explicitly.

In \S \ref{section:Markov_partition}, we recall results on Markov partition from \cite{bowen_markov}. In particular, we explain how one can get rid of the errors in the counting of periodic orbits due to the lack of injectivity of symbolic coding. We use then these results to reduce the proof of Theorems \ref{theorem:bound_resonances} and \ref{theorem:bound_determinant} to a bound on dynamical determinants associated to systems of open hyperbolic maps (Lemma \ref{lemma:individual_estimates}).

In \S \ref{section:dynamical_determinant}, we establish Lemma \ref{lemma:individual_estimates}, ending the proof of Theorems \ref{theorem:bound_resonances} and \ref{theorem:bound_determinant}. This is done by designing spaces of ultradistributions adapted to families of Gevrey open hyperbolic maps (similar to the spaces constructed for Gevrey Anosov maps in \cite[Appendix B]{jezequel_distribution}).

Finally, we isolated some technical results on entire functions in Appendix~\ref{appendix:bound_entire_functions}.

\subsection*{Acknowledgements}

The author benefits from the support of the French government “Investissements d’Avenir” program integrated to France 2030, bearing the following reference ANR-11-LABX-0020-01.

\section{Preliminaries}\label{section:preliminaries}

We begin with some preliminary discussions of hyperbolic flows (\S \ref{subsection:hyperbolic_flows}) and Gevrey regularity (\S \ref{subsection:Gevrey}). After recalling all the necessary definitions, we will be able in \S \ref{subsection:results} to state more detailed versions of Theorems \ref{theorem:resonances_simple} and \ref{theorem:determinant_simple}: Theorems \ref{theorem:bound_resonances} and \ref{theorem:bound_determinant}.

\subsection{Hyperbolic flows and dynamical determinant}\label{subsection:hyperbolic_flows}

Let $M$ be a smooth ($C^\infty$) compact manifold and $\phi = (\phi_t)_{t \in \mathbb{R}}$ be a smooth flow with generator $X$. We endow $M$ with any smooth Riemannian metric. Let $K \subseteq M$ be a closed $\phi$-invariant set. We assume also that $\phi$ has no fixed point in $K$. Let us recall some standard definitions.

\begin{definition}[{\cite[Definition 17.4.1]{katok_hasselblatt}}]\label{definition:hyperbolic}
We say that $K$ is \emph{hyperbolic} for $\phi$ if:
\begin{itemize}
\item for every $x \in K$, the tangent space $T_x M$ splits as
\begin{equation}\label{eq:hyperbolic_splitting}
T_x M = E_{\un}(x) \oplus E_\st(x) \oplus \langle X(x) \rangle;
\end{equation}
\item for every $x \in K$ and $t \in \mathbb{R}$, we have
\begin{equation*}
D \phi_t(x) E_{\un}(x) = E_{\un}(\phi_t(x)) \textup{ and } D \phi_t(x) E_{\st}(x)= E_{\st}(\phi_t(x));
\end{equation*}
\item there are constants $C,\lambda > 0$ such that for every $x \in K, v_\un \in E_\un(x), v_\st \in E_\st(x)$ and $t \geq 0$ we have
\begin{equation*}
|D \phi_ t (x) \cdot v_\st | \leq C e^{- \lambda t} |v_\st| \textup{ and } |D \phi_{-t}(x) \cdot v_\un | \leq C e^{- \lambda t} |v_\un|.
\end{equation*}
\end{itemize}
\end{definition}

\begin{definition}[{\cite[(1.1)]{bowen_markov}}]
We say that $K$ is \emph{a basic set} for $\phi$ if:
\begin{itemize}
\item $K$ contains no fixed point of $\phi$ and is hyperbolic for $\phi$;
\item periodic orbits of $\phi$ contained in $K$ are dense in $K$;
\item the restriction of $\phi$ to $K$ is transitive;
\item there is an open set $U$ in $M$ such that $K = \bigcap_{t \in \mathbb{R}} \phi_t(U)$.
\end{itemize}
\end{definition}

Until the end of this subsection, we assume that $K$ is a basic set for $\phi$. Consider $p : E \to M$ a smooth vector bundle over $M$ and $\Phi$ a smooth lift of $\phi$ on $E$. I.e. $\Phi = (\Phi_t)_{t \in \mathbb{R}}$ is a flow on $E$ such that for every $t \in \mathbb{R}$ we have $p \circ \Phi_t = \phi_t \circ p$, and for every $x \in \mathbb{R}$ the map induced by $\Phi_t$ from $E_x \to E_{\phi_t(x)}$ is linear. From these data, one can define a dynamical determinant by the formula
\begin{equation}\label{eq:definition_dynamical_determinant}
d(z) = \exp\left( - \sum_{\gamma \in \mathfrak{P}} \frac{T_\gamma^{\#}}{T_\gamma}\frac{\tr(\Phi_{\gamma})}{|\det(I - \mathcal{P}_\gamma)|} e^{- z T_\gamma} \right)
\end{equation}
which makes sense for $z \in \mathbb{C}$ with $\re z \gg 1$. In this formula, $\mathfrak{P}$ denotes the set of periodic orbits in $K$ for $\phi$ and if $\gamma \in \mathfrak{P}$, we let
\begin{itemize}
\item $T_\gamma$ be the length of $\gamma$;
\item $T_\gamma^{\#}$ be the primitive length of $\gamma$, that is the length of the shortest periodic orbit with the same image as $\gamma$;
\item $\Phi_\gamma$ be the map induced by $\Phi_{T_\gamma}$ on $E_x = p^{-1}(\set{x})$ for any $x$ on $\gamma$;
\item $\mathcal{P}_\gamma$ be the linearized Poincaré map, that is the map induced by $D \phi_{T_\gamma}(x)$ on $E_{\un}(x) \oplus E_{\st}(x)$ for any $x$ on $\gamma$.
\end{itemize}
Notice that the linear maps $\mathcal{P}_\gamma$ and $\Phi_\gamma$ depend on the choice of a point $x$ on $\gamma$, but their conjugacy classes do not, so that \eqref{eq:definition_dynamical_determinant} makes sense. Let us recall \cite{bowen_periodic_orbits} that, for every $\epsilon > 0$, the number of periodic orbits for $\phi$ in $K$ of length less than $T$ is $\mathcal{O}(e^{(h(\phi)+\epsilon) T})$ as $T$ goes to $+ \infty$. Here, $h(\phi)$ denotes the topological entropy of $\phi$. It follows from this estimate on the number of periodic orbits that the formula \eqref{eq:definition_dynamical_determinant} indeed makes sense when $\re z \gg 1$.

The most important fact about the dynamical determinant $d(z)$ is that it has a holomorphic continuation to $\mathbb{C}$ whose zeroes are the Ruelle resonances\footnote{In this paper, we will use this fact as a definition for Ruelle resonances. For more intrinsic definitions and basic properties, one may refer to the textbooks \cite{baladi_book,lefeuvre_book}. The first reference deals with the discrete-time case. Notice that there is a notion of multiplicity for resonances. With our definition, the multiplicity of a resonance is just its multiplicity as a zero of the dynamical determinant. We will always count resonances according to multiplicity.} associated to $\Phi$. This is proven in this context in \cite[Theorem 4]{dyatlov_guillarmou_open_systems} but see also \cite{glp_zeta,dyatlov_zworski_zeta} for the Anosov flows case.

The topological properties of hyperbolic flows have been extensively studied. We will need the following result, known as the expansivity properties. For a proof, see for instance \cite[Proposition (1.6)]{bowen_periodic_orbits} and \cite[Theorem 3]{bowen_walters_oneparameter}.

\begin{proposition}\label{proposition:expansivity}
For every $\epsilon > 0$, there is a $\nu > 0$ such that if $(u_i)_{i \in \mathbb{Z}}$ and $(t_i)_{i \in  \mathbb{Z}}$ are sequences of real numbers such that $t_{i} \underset{i \to + \infty}{\to} + \infty$, $t_{i} \underset{i \to - \infty}{\to} -\infty$ and $0 < t_{i+1} - t_i < \nu$ and $|u_{i+1} - u_i| < \nu$ for every $i \in \mathbb{Z}$, and $x,y \in K$ are such that $d(\phi_{t_i}(x),\phi_{u_i}(y)) \leq \nu$ for every $i \in \mathbb{Z}$, then there is $t \in [-\epsilon,\epsilon]$ such that $y = \phi_t(x)$.
\end{proposition}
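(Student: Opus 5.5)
The plan is to deduce expansivity from the local product structure of the basic set $K$, namely the stable and unstable manifold theorem for hyperbolic sets of flows. First I fix the local geometry. For $\delta>0$ small, every $x \in K$ has local strong stable and strong unstable manifolds $W^{\st}_\delta(x)$ and $W^{\un}_\delta(x)$. Their tangent spaces at $x$ are $E_\st(x)$ and $E_\un(x)$, and they are characterized as follows: a point $y$ lies in $W^\st_\delta(x)$ if and only if $d(\phi_t(x),\phi_t(y))\le\delta$ for all $t\ge0$, and then this distance decays like $Ce^{-\lambda t}$. The unstable manifolds are characterized the same way for $t \leq 0$. By the splitting \eqref{eq:hyperbolic_splitting}, the weak stable manifold $\bigcup_{|t|\le\delta}\phi_t(W^\st_\delta(x))$ and $W^\un_\delta(y)$ meet transversally. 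So for $d(x,y)$ small there is a unique bracket point $z \in W^\un_\delta(y)$ of the form $z = \phi_\tau(w)$, with $w\in W^\st_\delta(x)$ and $|\tau|\lesssim d(x,y)$.

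The next step is to reparametrize. Given $\epsilon$, I choose $\nu$ small compared with $\delta$ and $\epsilon$. From the hypotheses on $(t_i)$ and $(u_i)$ I build a continuous increasing surjective reparametrization $s:\mathbb{R}\to\mathbb{R}$ such that $d(\phi_t(x),\phi_{s(t)}(y))\le C\nu$ for all $t$. To do this, interpolate linearly, setting $s(t_i)=u_i$, and use uniform continuity of the flow on the intervals $[t_i,t_{i+1}]$, whose lengths are less than $\nu$. Replacing $y$ by $\phi_{u_0-t_0}(y)$ for convenience, the orbits of $x$ and $y$ then $C\nu$-shadow each other for all time, up to this reparametrization.

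The core step is to show that a reparametrized shadowing orbit lies on the same orbit. Near each point $\phi_t(x)$, I project $\phi_{s(t)}(y)$ along the flow onto a small transversal section to $X$ through $\phi_t(x)$. This uses that $X$ does not vanish on $K$. The result is a point $y_t = \phi_{s(t)+\sigma(t)}(y)$ in the section, with $|\sigma(t)|\lesssim\nu$. The time change $t\mapsto s(t)+\sigma(t)$ is then tied to holonomy between sections, and I get $d(\phi_t(x),\phi_{s'(t)}(y))\lesssim\nu$ with $s'(t)-t$ locally almost constant. Now I write $y_0$ in local product coordinates relative to $x$ in the section, with an unstable component $a$ and a stable component $b$. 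Forward hyperbolicity (Definition \ref{definition:hyperbolic}) forces $|a|$ to grow like $e^{\lambda t}|a|$ while the points stay in the section at the times $\phi_t(x)$. This contradicts the uniform bound $\lesssim\nu$ unless $a=0$. Backward time gives $b=0$ in the same way. Hence $y_0=x$, so $y=\phi_t(x)$ with $|t|\lesssim\nu\le\epsilon$.

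The main obstacle is the time reparametrization in this last step. The hyperbolic estimates hold for the linearized flow along a fixed orbit, while the shadowing holds only up to a reparametrization with drift. I would handle it by working with Poincaré return maps between a finite family of local transversal sections. In that setting the flow becomes a family of uniformly hyperbolic maps, and the standard expansivity argument for hyperbolic maps applies, via the contraction of graph transforms in adapted cone fields. The drift in time is absorbed by the flow direction, which the sections quotient out. This is essentially the route of \cite[Proposition (1.6)]{bowen_periodic_orbits} and \cite[Theorem 3]{bowen_walters_oneparameter}.
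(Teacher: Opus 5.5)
\textbf{The gap.} The step that breaks is the one you present as a convenience: replacing $y$ by $\phi_{u_0-t_0}(y)$. The conclusion is about $y$ itself, so you are not free to make this normalization.

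Undo it and see what your argument actually produces. Run your product-coordinate step in the section through $\phi_{t_0}(x)$, where $s(t_0)=u_0$. It gives $\phi_{u_0+\sigma}(y)=\phi_{t_0}(x)$ with $|\sigma|\lesssim\nu$, that is, $y=\phi_v(x)$ with $v=t_0-u_0-\sigma$. Nothing in the hypotheses controls $t_0-u_0$.

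In fact the statement as written is false. Take $x\in K$ and $T$ with $\phi_T(x)\neq\phi_t(x)$ for all $|t|\le\epsilon$; any $|T|>\epsilon$ works if $x$ is not periodic. Put $y=\phi_T(x)$, $t_i=i\nu/2$ and $u_i=t_i-T$. Every hypothesis holds, with $d(\phi_{t_i}(x),\phi_{u_i}(y))=0$, yet the conclusion fails.

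The missing hypothesis is $t_0=u_0$. It is the discrete analogue of the condition $s(0)=0$ in the Bowen--Walters definition of expansivity. It does hold where the paper applies the proposition, in the proof of Lemma~\ref{lemma:value_trace}, since there $t_0=u_0=0$. With this hypothesis your final step gives $|v|=|\sigma|\lesssim\nu\le\epsilon$, as required. You should add it explicitly rather than manufacture it by moving $y$.

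\textbf{Smaller points and comparison with the paper.} The piecewise linear $s$ with $s(t_i)=u_i$ is continuous, but it need not be increasing or surjective. The hypotheses allow $u_{i+1}<u_i$ and do not assume $u_i\to\pm\infty$. Your shadowing estimate only uses continuity, so simply drop those two adjectives.

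Apart from the normalization issue, your outline is the standard argument behind the works the paper cites: transversal sections, expansivity for a sequence of uniformly hyperbolic Poincar\'e maps via cone fields, and the time drift absorbed by quotienting out the flow direction. The paper itself gives no proof of this proposition and only refers to Bowen and Bowen--Walters.
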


\subsection{Gevrey regularity}\label{subsection:Gevrey}

Let us fix $s > 1$ and recall some basic facts about $s$-Gevrey or, from now on, $G^s$ regularity.

\begin{definition}
Let $n \geq 1$ and $U$ be an open subset of $\mathbb{R}^n$. We say that a function $f : U \to \mathbb{C}$ is $G^s$ if $f$ is $C^\infty$ and for every compact subset $K$ of $U$ there are constants $C, R > 0$ such that for every $\alpha \in \mathbb{N}^n$ and $x \in K$, we have
\begin{equation*}
|\partial^\alpha f(x)| \leq C R^{|\alpha|} \alpha!^s.
\end{equation*}
As usual, for $\alpha = (\alpha_1,\dots, \alpha_n) \in \mathbb{N}^n$, we define $\alpha!$ as $\alpha_1! \dots \alpha_n!$.
\end{definition}

The class of $G^s$ functions shares many properties with the class of $C^\infty$ functions: it is closed under composition and the inverse function theorem (and thus the implicit function theorem and its standard consequences) holds in the $G^s$ category. Consequently, basic differential geometry translates naturally to the $G^s$ setting: a $G^s$ manifold is just a second countable Hausdorff topological space endowed with a maximal $G^s$ atlas (i.e. an atlas with $G^s$ change of charts).

Notice that since the class of $G^s$ functions is closed by differentiation, the tangent bundle $TM$ of a $G^s$ manifold $M$ has a natural structure of $G^s$ vector bundle. Hence, it makes sense to say that a vector field $X$ on $M$ is $G^s$. Moreover, since the class of $G^s$ function is closed under solving ODEs, if $(\phi_t)_{t \in \mathbb{R}}$ denotes the flow of $X$, then $X$ is $G^s$ if and only if the map $(t,x) \mapsto \phi_t(x)$ is. 

For general results on Gevrey regularity (and other Denjoy--Carleman classes of regularity), the reader may refer to \cite{convenient_setting} and references therein.

Let us now discuss some more technical facts about Gevrey regularity, the goal being to prove Lemma \ref{lemma:gevreyification} below. We will use this result in \S \ref{section:Markov_partition} to justify the existence of a Markov partition whose rectangles are contained in $G^s$ disks. Let us start with an approximation lemma.

\begin{lemma}\label{lemma:approximation_gevrey}
Let $M$ and $N$ be $G^s$ manifolds. Then $G^s$ maps from $M$ to $N$ are dense in the set of of $C^1$ maps from $M$ to $N$ endowed with the strong $C^1$ topology.
\end{lemma}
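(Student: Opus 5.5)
The plan is to reduce to the Euclidean case and use mollification by a compactly supported Gevrey kernel, patched together with a Gevrey partition of unity. The key point is that $s>1$: the space $G^s_c(\mathbb{R}^n)$ of compactly supported $G^s$ functions is nontrivial. Concretely, the function $t \mapsto \exp(-t^{-1/(s-1)})$ for $t>0$, extended by $0$, is $G^s$. From it we build a $G^s$ bump $\rho \geq 0$ with $\int \rho = 1$ and set $\rho_\epsilon = \epsilon^{-n}\rho(\cdot/\epsilon)$. Then for any continuous $f$ on $\mathbb{R}^n$, the convolution $f * \rho_\epsilon$ is $G^s$, since all derivatives fall on $\rho_\epsilon$. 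If $f$ is $C^1$, then $f*\rho_\epsilon \to f$ in $C^1$ uniformly on compact sets as $\epsilon \to 0$. The same bumps also give $G^s$ partitions of unity subordinate to any locally finite cover by chart domains.

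Next I embed the target. I would first show that $N$ admits a $G^s$ embedding into some $\mathbb{R}^m$ as a $G^s$ submanifold. I would do this either via the Whitney-type embedding built from $G^s$ bump functions and charts (for compact $N$ finitely many charts suffice; in general use a countable locally finite cover and proper maps), or simply by citing \cite{convenient_setting}. I also need a $G^s$ tubular neighbourhood $W$ of $N$ with a $G^s$ retraction $\pi : W \to N$. The natural construction, using the normal bundle and the inverse function theorem, requires the normal bundle to be $G^s$. The Gauss map $x\mapsto T_xN$ is only $G^s$ of one derivative less, which is still $G^s$, so this works. Alternatively one takes an approximate normal bundle, obtained by mollifying the smooth one, and then applies the $G^s$ inverse function theorem recalled above.

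Now take a $C^1$ map $f : M \to N$ and a strong $C^1$ neighbourhood of it, described by a positive continuous tolerance $\delta$. Cover $M$ by a locally finite countable family of relatively compact $G^s$ chart domains $U_j$, and let $(\chi_j)$ be a $G^s$ partition of unity subordinate to this cover. View $f$ as a map into $\mathbb{R}^m$. In the chart of $U_j$, mollify $f$ at a scale $\epsilon_j$ to get $f_j$, and set $g = \sum_j \chi_j f_j$. Then $g - f = \sum_j \chi_j (f_j - f)$, so
\[
\n{g-f}_{C^1} \lesssim \sum_j \n{\chi_j}_{C^1}\n{f_j - f}_{C^1(\supp \chi_j)} ,
\]
and by local finiteness we can choose each $\epsilon_j$ small enough that $g$ is within $\delta$ of $f$ in $C^1$ on each $U_j$, with margin to absorb the finitely many overlaps. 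In particular $g$ takes values in $W$, so $\pi\circ g$ is a $G^s$ map $M\to N$. Finally, since $\pi$ is $C^1$ and $\pi \circ f = f$, $\pi\circ g$ is $C^1$-close to $f$, with the closeness controlled by the $C^1$ norm of $\pi$ on a compact neighbourhood of $f(\overline{U_j})$.

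I expect the main obstacle to be the existence of the $G^s$ embedding and tubular retraction, that is, making the target-side geometry Gevrey. The source-side mollification is routine once $G^s$ bumps exist. If the tubular neighbourhood proves awkward, I would instead work chartwise in $N$: first approximate $f$ by a smooth map, so that images of small sets lie in single $G^s$ charts of $N$, and then mollify the coordinate expressions. Here the gluing must be done by an inductive patching over the cover, modifying $g$ only on $\supp \chi_j$ at step $j$ while using convexity in the target chart, as in the classical proof in Hirsch's book.
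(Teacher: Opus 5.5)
Your argument is correct in outline, but it takes a different route from the paper. The paper gives no new construction. It observes that the proof of Theorem 2.6 in \cite[Chapter 2, \S 2]{hirsch_differential_topology} goes through once the bump functions, charts and convolution kernels are chosen $G^s$. That proof works with charts on both $M$ and $N$, local convolutions and an inductive patching, which is essentially the fallback in your last paragraph. (There, continuity of $f$ already puts small sets into single target charts, so no preliminary smooth approximation is needed.)

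Your main route instead linearizes the target through a $G^s$ embedding $N \subseteq \mathbb{R}^m$ and a $G^s$ tubular retraction $\pi$. This buys a one-shot, non-inductive gluing $g = \sum_j \chi_j f_j$ in a vector space, and you never have to keep images inside target charts. The price is global target-side geometry that the paper never needs:
\begin{itemize}
\item a $G^s$ Whitney embedding. For non-compact $N$, a countable cover alone produces infinitely many coordinates, so you must invoke the usual finite-dimensional reduction, e.g.\ Whitney's Sard-type perturbation and projection argument. This survives in $G^s$ because it only uses $G^s$ bumps, linear maps and the $G^s$ inverse function theorem.
\item a $G^s$ tubular neighbourhood. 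Your normal-bundle argument is fine, since the normal bundle of a $G^s$ submanifold is a $G^s$ subbundle.
\end{itemize}
You also use implicitly that the strong $C^1$ topology on $C^1(M,N)$ is the one induced from $C^1_S(M,\mathbb{R}^m)$.

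One small inaccuracy: the closeness of $\pi\circ g$ to $\pi\circ f = f$ is not controlled by the $C^1$ norm of $\pi$ alone. Indeed,
\[
D(\pi\circ g) - D(\pi\circ f) = D\pi(g)(Dg - Df) + (D\pi(g) - D\pi(f))Df .
\]
So you need uniform continuity of $D\pi$ on compact neighbourhoods of $f(\overline{U_j})$, for instance a $C^2$ bound, which is available since $\pi$ is $G^s$. This must be built into the choice of the scales $\epsilon_j$.
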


\begin{proof}
One can just notice that the proof in the $C^\infty$ case also applies in $G^s$ regularity, once we know that there are $G^s$ bump functions. One can for instance look at \cite[Chapter 2, \S 2]{hirsch_differential_topology}, see in particular the proof of Theorem 2.6 there. In this proof, if all the bump functions, charts and convolution kernels are chosen to be $G^s$, then the resulting approximation functions will also be $G^s$. 
\end{proof}

We say that a subset $D$ of an $n$-dimensional $C^1$ manifold $M$ is a $C^1$ disk if there is a $C^1$ embedding of the open unit ball in $\mathbb{R}^{n-1}$ in $M$ whose image is $D$. The definition of $G^s$ disk is obtained by replacing all the occurrences of $C^1$ by $G^s$ in this definition. We will use the following fact:

\begin{lemma}\label{lemma:gsdisk}
Let $M$ be a $G^s$ manifold and $D \subseteq M$ a $C^1$ disk. Assume that $D$ is a $G^s$ submanifold of $M$, then $D$ is a $G^s$ disk.
\end{lemma}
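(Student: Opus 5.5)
The idea is to take the given $C^1$ embedding $\iota : B \to M$ of the open unit ball $B \subseteq \mathbb{R}^{n-1}$ with image $D$, and replace it by a $G^s$ embedding with the same image. Since $D$ is a $G^s$ submanifold of $M$, it carries a $G^s$ manifold structure induced by $G^s$ submanifold charts of $M$. The $C^1$ structure underlying it is the one making $D$ a $C^1$ submanifold. Hence $\iota : B \to D$ is a $C^1$ diffeomorphism onto the $G^s$ manifold $D$. Thus it suffices to show that a $G^s$ manifold which is $C^1$-diffeomorphic to $B$ is $G^s$-diffeomorphic to $B$. The corresponding embedding $B \to D \hookrightarrow M$ is then $G^s$, with image $D$.

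To produce the $G^s$ diffeomorphism, I would apply Lemma \ref{lemma:approximation_gevrey} to the $C^1$ diffeomorphism $f = \iota^{-1} : D \to B$. This gives a $G^s$ map $g : D \to B$ that is close to $f$ in the strong $C^1$ topology. The key input is the classical fact that $C^1$ diffeomorphisms form an open set in the strong $C^1$ topology (Hirsch, Chapter 2, Theorem 1.6). Consequently a sufficiently good approximation $g$ is itself a $C^1$ diffeomorphism $D \to B$. Its inverse is then $G^s$ by the $G^s$ inverse function theorem, since $g$ is a $G^s$ bijection with invertible differential everywhere. Then $g^{-1} : B \to D$ is the desired $G^s$ embedding.

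The main obstacle is the openness of diffeomorphisms for a noncompact domain, and in particular keeping surjectivity onto the open ball $B$. A naive approximation could miss points near the boundary, or map outside $B$. I would handle this as follows.
\begin{enumerate}
\item Choose the strong neighbourhood so that $|g(x) - f(x)| < \epsilon(x)$, where $\epsilon(x) \to 0$ as $f(x) \to \partial B$, together with $C^1$-closeness ensuring $Dg$ is invertible and $g$ is injective. This makes $g$ a proper local diffeomorphism into $B$ extending "to the boundary" like $f$.
\item Use that properness and injectivity give a closed and open image, which is therefore all of $B$ by connectedness.
\end{enumerate}
Alternatively, I would avoid the issue by working on a compact slightly larger disk. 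That route, however, requires $D$ to extend, which is not assumed. So I would rely on the strong-topology openness of $\mathrm{Diff}^1(D,B)$, which is exactly why the strong $C^1$ topology appears in Lemma \ref{lemma:approximation_gevrey}.
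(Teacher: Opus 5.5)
Your proposal is correct and follows the paper's argument: approximate in the strong $C^1$ topology using Lemma~\ref{lemma:approximation_gevrey}, use that $C^1$ diffeomorphisms form an open set (Hirsch, Chapter 2, Theorem 1.6), and conclude with the $G^s$ inverse function theorem. The only differences are cosmetic. You approximate $\iota^{-1} : D \to B$ rather than $\iota : B \to D$, so you invert at the end. Your properness-and-connectedness argument for surjectivity re-proves what Hirsch's openness theorem already provides.
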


\begin{proof}
Let $n$ denote the dimension of $M$ and $B$ be the unit ball in $\mathbb{R}^{n-1}$. By assumption there is a $C^1$ embedding $\kappa : B \to M$ whose image is $D$. In particular, $\kappa$ induces a $C^1$ diffeomorphism between $B$ and $D$. According to Lemma \ref{lemma:approximation_gevrey}, we may approximate $\kappa$ by a $G^s$ map $\tilde{\kappa}$ in the space of $C^1$ map from $B$ to $D$ endowed with the strong topology. Since the set of $C^1$ diffeomorphisms is open for this topology \cite[Chapter 2, Theorem 1.6]{hirsch_differential_topology}, we may assume that $\tilde{\kappa}$ is a $C^1$ diffeomorphism between $B$ and $D$. Hence, $\tilde{\kappa}$ is a $G^s$ diffeomorphism between $B$ and $D$ (because the inverse function theorem holds in $G^s$ regularity). It follows that $\tilde{\kappa}$ defines a $G^s$ embedding of $B$ in $M$ whose image is $D$.
\end{proof}

The following result will be used in \S \ref{section:Markov_partition} to upgrade the Markov partitions given by \cite{bowen_markov}.

\begin{lemma}\label{lemma:gevreyification}
Let $s > 1$ and $M$ be a $G^s$ manifold. Let $D \subseteq M$ be a $C^1$ disk and $K \subseteq D$ be compact. Let $X$ be a $C^1$ vector field on $M$, transverse to $D$, and $\phi$ be the associated flow. Let $\epsilon > 0$. Then, there is a $C^1$ disk $D' \subseteq D$ containing $K$ and a $C^1$ function $\tau : D' \to (-\epsilon,\epsilon)$ such that:
\begin{itemize}
\item for every $x \in D'$, the flow $\phi$ is defined at time $\tau(x)$;
\item the set $\set{ \phi_{\tau(x)}(x) : x \in D'}$ is a $G^s$ disk.
\end{itemize}
\end{lemma}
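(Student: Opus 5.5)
The idea is to replace $D$ by a nearby $G^s$ hypersurface that is $C^1$-close to it, and to slide along the flow to pass from one to the other. The tool is Lemma \ref{lemma:approximation_gevrey}, followed by Lemma \ref{lemma:gsdisk}.

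First I fix a $C^1$ embedding $\kappa : B \to M$ with image $D$, where $B$ is the open unit ball of $\mathbb{R}^{n-1}$. Choose $0<r<1$ such that $K \subseteq \kappa(B_r)$. Since $X$ is transverse to $D$, the map
\begin{equation*}
F : (-\delta,\delta) \times B \to M,\qquad F(t,y) = \phi_t(\kappa(y)),
\end{equation*}
is a $C^1$ local diffeomorphism near $\{0\}\times \overline{B_{r'}}$ for any $r<r'<1$. Shrinking $\delta$, it is a $C^1$ diffeomorphism from $(-\delta,\delta)\times B_{r'}$ onto an open flow box $V$; injectivity for small $\delta$ follows from the standard compactness argument. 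Here $\delta<\epsilon$ is chosen so that the flow is defined up to time $\delta$ on $\kappa(B_{r'})$. Let $\pi = (\pi_t,\pi_y) : V \to (-\delta,\delta)\times B_{r'}$ be the inverse.

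Next I apply Lemma \ref{lemma:approximation_gevrey} to $\kappa$, restricted to a neighbourhood of $\overline{B_{r'}}$ (using an open set of $\mathbb{R}^{n-1}$ as the $G^s$ manifold). This gives a $G^s$ map $\tilde\kappa$ that is $C^1$-close to $\kappa$ there. In particular $\tilde\kappa(\overline{B_{r'}}) \subseteq V$, and the composite $g = \pi_y\circ\tilde\kappa$ is $C^1$-close to the identity on $\overline{B_{r'}}$, while $\pi_t\circ\tilde\kappa$ is uniformly small. For a sufficiently good approximation, $g$ is a $C^1$ diffeomorphism from $B_{r'}$ onto its image, and that image contains $\overline{B_{r''}}$ for some $r<r''<r'$. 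I then set
\begin{equation*}
D' = \kappa(B_{r''}),\qquad \tau(\kappa(y)) = \pi_t\bigl(\tilde\kappa(g^{-1}(y))\bigr).
\end{equation*}
This $\tau$ is $C^1$ with values in $(-\delta,\delta)\subseteq(-\epsilon,\epsilon)$, and the flow is defined at time $\tau(x)$ by the choice of $\delta$. By construction $\phi_{\tau(x)}(x) = \tilde\kappa(g^{-1}(y))$ for $x=\kappa(y)$, so the set $\{\phi_{\tau(x)}(x) : x\in D'\}$ equals $\tilde\kappa(g^{-1}(B_{r''}))$. The set $D'$ is clearly a $C^1$ disk containing $K$.

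Finally, $\tilde\kappa$ is $C^1$-close to an embedding on the compact set $\overline{B_{r'}}$, so it is itself an injective immersion there, hence a $G^s$ embedding of a neighbourhood of $g^{-1}(\overline{B_{r''}})$. It follows that $\tilde\kappa(g^{-1}(B_{r''}))$ is a $G^s$ submanifold. It is also a $C^1$ disk, being the image of the unit ball under the $C^1$ embedding $y\mapsto \tilde\kappa(g^{-1}(r''y))$. Lemma \ref{lemma:gsdisk} then shows that it is a $G^s$ disk.

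I expect the main obstacle to be the quantitative step: showing that $C^1$-closeness of $\tilde\kappa$ to $\kappa$ makes $g$ a diffeomorphism whose image covers $\overline{B_{r''}}$, and makes $\tilde\kappa$ injective. This is a standard perturbation of the identity, via the inverse function theorem with uniform constants together with a degree or contraction argument. I would handle it using the openness of embeddings in the strong $C^1$ topology, as in the proof of Lemma \ref{lemma:gsdisk}, applied on a slightly larger compact ball.
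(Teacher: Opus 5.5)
Your argument is correct, but it takes a different route from the paper.

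The paper never approximates the parametrization of $D$. It builds the flow box $U$ over a smaller disk $D_0$ and takes the flow-time coordinate $\tau_0 : U \to (-\epsilon_0,\epsilon_0)$, so that $D_0 = \{\tau_0 = 0\}$ and $X\tau_0 = 1$. It then uses Lemma \ref{lemma:approximation_gevrey} to replace $\tau_0$ by a real-valued $G^s$ function $\tau_1$ with $\tau_1 - \tau_0$ and $1 - X\tau_1$ uniformly small. The new disk is the level set $\{\tau_1 = 0\}$, which is a $G^s$ hypersurface because $\tau_1$ is a $G^s$ submersion. The time $\tau(x)$ is the unique zero of $t \mapsto \tau_1(\phi_t(x))$; it exists and is unique because the $t$-derivative $X\tau_1(\phi_t(x))$ stays close to $1$. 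Its $C^1$ dependence on $x$ comes from the implicit function theorem, and Lemma \ref{lemma:gsdisk} concludes as in your proof.

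What the paper's route buys is that the step you single out as the main obstacle disappears. Each flow line of the box meets $\{\tau_1 = 0\}$ exactly once, so projection along the flow is automatically bijective. No $(n-1)$-dimensional perturbation-of-the-identity argument (injectivity of $g$, covering of $\overline{B_{r''}}$) is needed, and one only approximates a scalar function rather than a map into $M$. Your route, in exchange, gives an explicit $G^s$ parametrization $\tilde\kappa$ and an explicit formula for $\tau$. It also reads the $G^s$ submanifold property directly off $\tilde\kappa$ being an embedding, rather than from a level-set argument.

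Two small bookkeeping fixes. First, as written, $\tilde\kappa(\overline{B_{r'}}) \subseteq V$ cannot be expected: $V = F((-\delta,\delta)\times B_{r'})$ is built over the open ball and in general does not contain $\kappa(\partial B_{r'})$. Either build the flow box over a slightly larger ball (injectivity of $F$ holds on a neighbourhood of $\{0\}\times\overline{B_{r'}}$), or only require closeness on a smaller closed ball $\overline{B_\rho}$ with $r'' < \rho < r'$. Second, you do not need openness of embeddings for $\tilde\kappa$. Since $\pi_y\circ\tilde\kappa = g$ is injective with invertible differential, $\tilde\kappa$ is already an injective immersion there, with continuous inverse $g^{-1}\circ\pi_y$ on its image.
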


\begin{proof}
Let $D_0 \subseteq D$ be a $C^1$ disk such that $\overline{D}_0$ is compact and contained in $D$ and $K \subseteq D_0$. Since $X$ is transverse to $D$, there is $\epsilon_0$ such that for every $x \in D_0$ the integral curve for $X$ starting at $x$ is defined on $[- \epsilon_0,\epsilon_0]$ and the map
\begin{equation*}
\begin{array}{ccccc}
\Phi & : & D_0 \times ( - \epsilon_0,\epsilon_0) & \to & M \\
 & & (x,t) & \mapsto & \phi_t(x)
\end{array}
\end{equation*}
is a $C^1$ diffeomorphism on its image. Up to making $\epsilon_0$ smaller, we may assume that it is smaller than $\epsilon/2$. Let $U$ denote the image of $\Phi$. This is an open set of $M$, and there is a $C^1$ function $\tau_0 : U \to (-\epsilon_0,\epsilon_0)$ such that $\phi_{-\tau_0(x)}(x) \in D_0$ for every $x \in U$ (just take the last coordinate of $\Phi^{-1}(x)$).

Let then $D_1$ be a $C^1$ disk such that $\overline{D}_1 \subseteq D_0$ and containing $K$. Let $V = \Phi(D_1 \times ( - \epsilon_0/2,\epsilon_0/2))$. According to Lemma \ref{lemma:approximation_gevrey}, we may approximate $\tau_0$ on $V$ by a $G^s$ function $\tau_1 : V \to (-\epsilon_0,\epsilon_0)$ such that for every $x \in V$ we have $|\tau_1(x) - \tau_0(x)| \leq \delta$ and $|1 - X \tau_1(x)| \leq \delta$ where $\delta = \epsilon_0/(3 + \epsilon_0)$.

Since $\delta /(1 - \delta) < \epsilon_0/2$, we find that for every $x \in D_1$, there is a unique $\tau(x) \in ( - \epsilon_0/2,\epsilon_0/2)$ such that $\tau_1(\phi_{\tau(x)}(x)) = 0$. It follows from the Implicit Function Theorem that the function $\tau$ constructed this way is $C^1$. Moreover, it follows from the construction of $\tau$ that
\begin{equation*}
\set{ \phi_{\tau(x)}(x) : x \in D_1} = \set{ x \in V : \tau_1(x) = 0}.
\end{equation*}
Since $X \tau_1$ does not vanish, $\tau_1$ is a $G^s$ submersion on $V$, and thus the set $\set{ x \in V : \tau_1(x) = 0}$ is a $G^s$ manifold, hence a $G^s$ disk according to Lemma~\ref{lemma:gsdisk}.
\end{proof}

\subsection{Detailed results}\label{subsection:results}

With the context introduced in \S \ref{subsection:hyperbolic_flows} and \S \ref{subsection:Gevrey}, we can now state more general versions of Theorems \ref{theorem:resonances_simple} and \ref{theorem:determinant_simple}. We start with a bound on the number of resonances, recalling that resonances may be defined as the zeroes the (holomorphic continuation of the) dynamical determinant \eqref{eq:definition_dynamical_determinant}.

\begin{theorem}\label{theorem:bound_resonances}
Let $s > 1$. Assume that $\phi$ is a $G^s$ flow on a compact $G^s$ manifold $M$ of dimension $n$ with a basic set $K \subseteq M$. Let $E \to M$ be a $G^s$ vector bundle over $M$ and $\Phi : E \to E$ a $G^s$ lift of $\phi$ on $E$. For $r > 0$, let $N(r)$ be the number of associated resonances of modulus less than $r$. There is a constant $C > 0$ such that for every $r \geq 1$ we have
\begin{equation*}
N(r) \leq C r^{1 + (n-1)s}.
\end{equation*}
\end{theorem}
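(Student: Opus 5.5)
The plan is to deduce Theorem \ref{theorem:bound_resonances} from a growth bound on the dynamical determinant $d(z)$ of \eqref{eq:definition_dynamical_determinant} (the content of Theorem \ref{theorem:bound_determinant}), combined with Jensen's formula. Fix $\rho > 1+(n-1)s$ slightly larger, or better work with the exact exponent. First I establish that there is $C>0$ with $\log|d(z)| \leq C|z|^{1+(n-1)s}\log(2+|z|)$ for all $z \in \mathbb{C}$. I will then need a slightly sharper statement to avoid the logarithm in the resonance count. Jensen's formula applied on a disk centred at a point $z_0$ with $\re z_0 \gg 1$ (where $d(z_0)\neq 0$, and indeed $|d(z_0)|$ is close to $1$ since the series in \eqref{eq:definition_dynamical_determinant} is small) gives
\begin{equation*}
N(r) \leq \frac{1}{\log 2}\left(\max_{|z - z_0| \leq 2r + 2|z_0|} \log|d(z)| - \log|d(z_0)|\right),
\end{equation*}
so $N(r) = \mathcal{O}(r^{1+(n-1)s})$ provided $\log|d(z)| = \mathcal{O}(|z|^{1+(n-1)s})$. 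When $1+(n-1)s$ is an integer and only the bound with the logarithm is available, I will instead use the more precise form of the bound coming from the operator-theoretic proof: $d$ is a quotient of Fredholm determinants $\det(I - \mathcal{L}_z)$. The count of zeroes of $\det(I-\mathcal{L}_z)$ in a disk is controlled by $\#\{k : \sigma_k(\mathcal{L}_z) \geq 1/2\}$ summed suitably, which is a cleaner source for $N(r)$ than the growth of $|d|$ and gives no logarithmic loss. Zeroes of the denominators only add zeroes, since $N$ counts zeroes of $d$, which are bounded by the zeroes of the numerator.

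For the determinant bound itself, I follow the strategy announced in the introduction. First I treat the case where $K$ is a single periodic orbit (\S\ref{section:periodic_orbit}) by explicit computation: $d$ is then a finite product of functions like $\prod_k (1 - \mu e^{-zT})$, with zeroes on finitely many vertical lines spaced by $2\pi/T$. This gives $N(r)=\mathcal{O}(r)$. Otherwise, I take a Markov partition of $K$ (Bowen) and use Lemma \ref{lemma:gevreyification} to make its rectangles lie in $G^s$ transverse disks. I then write $d$ as a quotient of products of dynamical determinants of systems of $G^s$ open hyperbolic maps (Poincaré maps between disks, with roof functions $\tau$). The coding multiplicity corrections are handled by Bowen's inclusion–exclusion, with finitely many factors.

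For each such system, the transfer operator $\mathcal{L}_z u = \sum e^{-z\tau}\,\Phi\cdot u\circ T^{-1}$ acts on an anisotropic Gevrey ultradistribution space on $(n-1)$-dimensional disks, as in \cite[Appendix B]{jezequel_distribution}. The main obstacle is proving the singular value estimate $\sigma_k(\mathcal{L}_z) \leq C\exp(C|z| - c\,k^{1/((n-1)s)})$. The factor $e^{C|z|}$ comes from $|e^{-z\tau}|$, with $\tau$ bounded. The stretched-exponential decay in $k$ comes from Gevrey smoothing in $n-1$ dimensions, via a FBI/Gabor-type localisation where hyperbolicity gives a gain $e^{-c|\xi|^{1/s}}$. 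The key point is that $z$ enters only through the multiplier $e^{-z\tau}$, whose $G^s$ norms must be controlled uniformly. This requires $\tau$ to be $G^s$, and I expect the required bound to cost only the factor $e^{C|z|}$ when frequencies $|\xi| \gtrsim |z|^s$, with careful bookkeeping. Then $\log|\det(I-\mathcal{L}_z)| \leq \sum_k \log(1+\sigma_k)$ is dominated by the number of $k$ with $k^{1/((n-1)s)} \lesssim |z|$, namely $|z|^{(n-1)s}$, each contributing $\mathcal{O}(|z|)$. This gives $|z|^{1+(n-1)s}$. The denominators are nonvanishing on $\re z \gg 1$, and this is used to conclude via Appendix \ref{appendix:bound_entire_functions}.
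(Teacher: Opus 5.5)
Your route is the paper's. You treat a single periodic orbit explicitly. You use a $G^s$ Markov partition with Bowen's inclusion--exclusion to write $d=f/g$, with $f,g$ products of Fredholm determinants $\det(I-\mathcal{L}_z)$ on Gevrey anisotropic spaces over $(n-1)$-dimensional disks. You prove the estimate $\sigma_k(\mathcal{L}_z)\le Ce^{C|z|-c\,k^{1/((n-1)s)}}$. Finally you apply Jensen's formula to the numerator $f$, whose bound has no logarithm, using that the zeros of $f/g$ are among those of $f$. That last step is exactly Lemma~\ref{lemma:bound_entire_function}, so your appeal to $\#\{k:\sigma_k(\mathcal{L}_z)\ge 1/2\}$ is unnecessary, and as stated it is unsubstantiated. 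Using $|\det(I-\mathcal{L}_z)|\le\prod_k(1+\sigma_k(\mathcal{L}_z))$ is a slicker substitute for the paper's Grothendieck expansion plus Hadamard's inequality. The paper's decomposition \eqref{eq:bound_coefficients} gives exactly your singular value bound, since $\sigma_{k+1}(\mathcal{L}_z)\le\sum_{m\ge k}|\lambda_m|$ on the Hilbert space $\mathcal{H}_\epsilon$.

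Two steps need correcting.

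\textbf{Single periodic orbit.} For $n\ge 2$, $d$ is not a finite product. By \eqref{eq:periodic_determinant}, $d(z)=\prod_{\rho\in\mathfrak{R}}(1-\rho e^{-zt_0})$, where $\mathfrak{R}$ consists of all numbers $\zeta(-1)^{q_-}\prod_j\lambda_j^{-k_j}\prod_k\mu_k^{\ell_k}$. This set is infinite and accumulates at $0$. Consequences:
\begin{itemize}
\item the zeros lie on infinitely many vertical lines $\re z=t_0^{-1}\log|\rho|$;
\item convergence of the product needs an argument such as Lemma~\ref{lemma:simple_bound};
\item $O(r^{n-1})$ of these lines meet $\{|z|\le r\}$, each carrying $O(r)$ zeros there, so $N(r)=O(r^n)$, not $O(r)$.
\end{itemize}
This still suffices for the theorem because $n\le 1+(n-1)s$, but the claim as you wrote it is false.

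\textbf{Identification $\det(I-\mathcal{L}_z)=d_{\mathbf{k}}(z)$ for $\re z\gg1$.} You take this for granted, but it is where the paper does real work.
\begin{itemize}
\item With $T$ the Poincar\'e map, the operator $u\mapsto u\circ T^{-1}$ without a Jacobian factor has flat-trace weights $|\det(I-DT^{-m})|^{-1}=|\det DT^m|\,|\det(I-DT^m)|^{-1}$. These are not the weights $|\det(I-\mathcal{P}_\gamma)|^{-1}$ of \eqref{eq:definition_dynamical_determinant}, which is why the paper uses the composition operator $u\mapsto\chi e^{-z\tau}N\,u\circ F$.
\item The maps are only defined near the rectangles and are cut off, so the Poisson-summation computation of $\tr(\mathcal{L}_z^m)$ needs Lemma~\ref{lemma:useful_later}: the composed derivative has no eigenvalue $1$ on the relevant supports.
\item It also needs expansivity together with Lemma~\ref{lemma:better_partition}, to ensure each admissible cycle produces exactly one fixed point, namely the coded periodic point.
\end{itemize}
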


We have a related result for the dynamical determinant :

\begin{theorem}\label{theorem:bound_determinant}
Let $s > 1$. Assume that $\phi$ is a $G^s$ flow on a compact $G^s$ manifold $M$ of dimension $n$ with a basic set $K \subseteq M$. Let $E \to M$ be a $G^s$ vector bundle over $M$ and $\Phi : E \to E$ a $G^s$ lift of $\phi$ on $E$. Let $d(z)$ be the dynamical determinant defined as the holomorphic continuation to $\mathbb{C}$ of \eqref{eq:definition_dynamical_determinant}. There is a constant $C > 0$ such that for every $z \in \mathbb{C}$ we have
\begin{equation*}
|d(z)| \leq \begin{cases} C \exp \left(C |z|^{1 + (n-1)s}\right) & \textup{ if } 1 + (n-1)s \textup{ is not an integer,} \\
                          C \exp \left(C|z|^{1 + (n-1)s} \log(1 + |z|)\right) & \textup{ if } 1 + (n-1)s \textup{ is an integer.} \end{cases}
\end{equation*}
\end{theorem}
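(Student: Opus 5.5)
We reduce, following Rugh, from the flow to a finite system of open hyperbolic $G^s$ maps between transversal disks. Then we bound each resulting Fredholm determinant by controlling the singular values of a transfer operator acting on a space of ultradistributions adapted to $G^s$ regularity. The exponent $1+(n-1)s$ appears naturally. The transversal disks have dimension $n-1$, and a $G^s$ hyperbolic map in dimension $n-1$ has a transfer operator whose singular values decay like $\exp(-c\, k^{1/((n-1)s)})$. The flow direction contributes one more power of $|z|$, through the factor $e^{-z\tau}$ where $\tau$ is the return time.

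\textbf{Step 1 (single periodic orbit).} If $K$ is a single periodic orbit, we compute $d(z)$ explicitly. It is a finite product of factors of the form $\prod_k (1-\mu_k e^{-zT})$, with $\mu_k$ running over the products of eigenvalues of $\mathcal{P}_\gamma$ and $\Phi_\gamma$. This gives order at most $1$, and at most $\mathcal{O}(r)$ zeros in a disk of radius $r$ (in fact $\mathcal{O}(r^{n})$ or better), which is acceptable.

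\textbf{Step 2 (Markov partition).} Otherwise, we take a Bowen Markov partition by small rectangles lying in $C^1$ transversal disks. Using Lemma \ref{lemma:gevreyification}, we slide each disk along the flow so that the rectangles lie in $G^s$ disks $D_1,\dots,D_m$. The Poincaré maps $F_{ij}$ between neighbourhoods of these disks, together with the return times $\tau_{ij}$, are then $G^s$ and hyperbolic. Bowen's correction of the coding overcounting, using inclusion–exclusion over intersections of boundary pieces, expresses $d(z)$ as a finite quotient of products of dynamical determinants. Each of these is attached to a system of open $G^s$ hyperbolic maps with weights $e^{-z\tau}\Phi$, possibly restricted to boundary intersections or to exterior powers. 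The denominators are nonvanishing and can be absorbed by a standard argument, since the identity is one of entire functions. So it suffices to bound each factor $\det(I-\mathcal{L}_z)$, where $\mathcal{L}_z u=\sum e^{-z\tau_{ij}}\,\Phi\,\chi\, u\circ F_{ij}^{-1}$ is suitably weighted. This is the content of Lemma \ref{lemma:individual_estimates}.

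\textbf{Step 3 (main obstacle: singular values).} We build a Hilbert space of ultradistributions on each $D_i$, in the spirit of \cite[Appendix B]{jezequel_distribution}. It is defined by an FBI/Bargmann transform with anisotropic weights $\exp(\pm|\xi|^{1/s})$, positive in the unstable cones and negative in the stable ones. This should make $\mathcal{L}_z$ trace class with singular values
\[
\sigma_k(\mathcal{L}_z)\le C e^{C|z|}\exp\!\big(-c\,k^{1/((n-1)s)}\big).
\]
Here the loss $e^{C|z|}$ comes from $e^{-z\tau}$ and from the $z$-dependence of the $G^s$ cutoffs. Getting exactly this uniform dependence on $z$, with no loss in the exponent, is the delicate part. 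The fix we would try first is to choose the Gevrey cutoffs independently of $z$ and to use that the weight $e^{-z\tau}$ is $G^s$ with derivative bounds $|z|^{|\alpha|}$. Near frequency $|\xi|\lesssim |z|^{s}$ this only costs $e^{C|z|}$, and beyond that the ultradistribution weights dominate.

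\textbf{Step 4 (conclusion).} From Weyl's inequality, $|\det(I-\mathcal{L}_z)|\le\prod_k(1+\sigma_k)$. The factors with $k\lesssim |z|^{(n-1)s}$ contribute at most $\exp(C|z|\cdot|z|^{(n-1)s})$, and the rest contribute $\mathcal{O}(1)$. This yields Theorem \ref{theorem:bound_determinant}, with the logarithmic correction in the integer case handled by the appendix on entire functions. Jensen's formula then gives $N(r)\le C r^{1+(n-1)s}$, after normalizing $d$ at a point $z_0$ with $\re z_0\gg1$ where $|d(z_0)|\ge 1/2$, and this proves the statement.
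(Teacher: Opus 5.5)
Your architecture is the paper's: a $G^s$ Markov partition via Lemma \ref{lemma:gevreyification}, Bowen's inclusion--exclusion giving $d=f/g$, and anisotropic Gevrey spaces on the $(n-1)$-dimensional transversals with nuclear coefficients of size $e^{C|z|-c\,m^{1/(s(n-1))}}$. Bounding $\det(I-\mathcal{L}_z)$ by $\prod_k(1+\sigma_k)$ via Weyl's inequality is an equivalent substitute for the paper's Grothendieck expansion with Hadamard's inequality on \eqref{eq:sum_rank_one}.

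The real omission is the identity $\det(I-\mathcal{L}_z)=d_{\mathbf{k}}(z)$ for $\re z\gg1$. You take it for granted when you say it suffices to bound $\det(I-\mathcal{L}_z)$. Without it the estimate says nothing about $d_{\mathbf{k}}$, and you do not even know that $f$ and $g$ are entire. For cut-off open maps this is not automatic. You must show:
\begin{enumerate}[label=(\alph*)]
\item $\tr\mathcal{L}_z^m$ on the anisotropic space is the flat trace. The paper gets this from the frame identity \eqref{eq:inverse_Fourier} and Poisson summation after the change of variables $x\mapsto F_{\alpha_0,\dots,\alpha_m}(x)-x$.
\item $I-DF_{\alpha_0,\dots,\alpha_m}$ is invertible on the whole support of the cutoffs, not only on $K$. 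This is Lemma \ref{lemma:useful_later}, a cone argument along orbits staying $\delta$-close to the rectangles.
\item Each admissible cycle contributes exactly one fixed point, the coded point $\pi_A(p_{\mathbf{k}}((\alpha_{j \bmod m})_j))$. Spurious fixed points of the cut-off compositions are ruled out by expansivity (Proposition \ref{proposition:expansivity}), the smallness of the partition, and the no-short-return disks of Lemma \ref{lemma:better_partition}.
\item The cycle count yields the weight $T^{\#}/T$ (Remark \ref{remark:number_antecedents}).
\end{enumerate}
This is the content of Lemmas \ref{lemma:value_trace} and \ref{lemma:identification_determinant}.

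Two smaller points. In Step 2, if ``nonvanishing'' means zero-free, it is false: the even-length $d_{\mathbf{k}}$ have zeros in general. All you have is $g\not\equiv0$ and $f/g=d$ entire. The absorption is then Lemma \ref{lemma:bound_entire_function}: Hadamard factorization of $f$ and $g$, cancellation of the zeros of $g$, and Jensen applied to $f$ to count what remains. That is precisely where $\log(1+|z|)$ enters for integer $1+(n-1)s$; a zero-free $g$ would be $e^{Q}$ and produce no logarithm at all. In Step 1, the single-orbit determinant is the infinite product $\prod_{\rho\in\mathfrak{R}}(1-\rho e^{-zt_0})$ with $\#\set{\rho:|\rho|\ge r}=\mathcal{O}(|\log r|^{n-1})$. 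It therefore grows like $\exp(C|\re z|^n)$ and has about $r^n$ zeros in $\set{|z|\le r}$, so its order is $n$, not $1$. That is still enough, since $n\le 1+(n-1)s$.
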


Theorems \ref{theorem:bound_resonances} and \ref{theorem:bound_determinant} improve \cite[Theorem 3.1 and Proposition 3.2]{asterisque} in the case $s > 1$ by replacing the exponent $ns$ from this reference by $1 + (n-1)s$. The difficulty with integral exponent in Theorem \ref{theorem:bound_determinant} does not appear in \cite{asterisque} though. We suspect that the fact that the bound is slightly worst when $1 + (n-1)s$ is merely an artifact of the proof. The difficulty appears precisely in Lemma \ref{lemma:bound_entire_function}. On a technical level it is related to the fact that, altough for every $\alpha > 1$ we have
\begin{equation*}
\sum_{k = 1}^n \frac{1}{k^\alpha} \underset{n \to + \infty}{\sim} \frac{1}{1- \alpha} \frac{1}{n^{\alpha - 1}},
\end{equation*}
for $\alpha = 0$ we have
\begin{equation*}
\sum_{k = 1}^n \frac{1}{k} \underset{n \to + \infty}{\sim} \log(n),
\end{equation*}
and the right-hand side in this estimate is not exactly a constant term.

\section{Case of a single periodic orbit}\label{section:periodic_orbit}

In this section, we prove Theorems \ref{theorem:bound_resonances} and \ref{theorem:bound_determinant} in the case of a basic set which is reduced to a single periodic orbit. Indeed, the results on Markov partitions that we will use in the proof of Theorems \ref{theorem:bound_resonances} and \ref{theorem:bound_determinant} below do not apply in this situation. Fortunately, this case may be dealt with explicitly. Actually, the Gevrey regularity assumption is not needed in this specific situation.

In this section, $\phi = (\phi_t)_{t \in \mathbb{R}}$ denotes a smooth flow on a smooth manifold $M$, and $K \subseteq M$ is a basic set for $\phi$ that is reduced to a single periodic orbit. I.e. there is $x_0 \in K$ such that $K= \set{\phi_t(x_0) : t \in \mathbb{R}}$. It implies that $x_0$ is a periodic point, and we will denote by $t_0$ its minimal period. As above, let $E \to M$ be a smooth vector bundle and $\Phi :E \to E$ a smooth lift of $\phi$. 

If $\gamma_0$ denotes the orbit of $\phi$ of length $t_0$ passing through $x_0$, then the periodic orbits of $\phi$ are\footnote{If $\gamma$ is a periodic orbit for a flow $\phi$ and $p$ a positive integer, we denote by $\gamma^p$ the periodic orbit of $\phi$ obtained by going $p$ times along $\gamma$.} $\mathfrak{P} = \set{\gamma_0^p : p \in \mathbb{N}^*}$. Consequently for $z \in \mathbb{C}$ with $\re z \gg 1$, we have
\begin{equation*}
\sum_{\gamma \in \mathfrak{P}} \frac{T_\gamma^{\#}}{T_\gamma}\frac{\tr(\Phi_{\gamma})}{|\det(I - \mathcal{P}_\gamma)|} e^{- z T_\gamma} = \sum_{p \geq 1} \frac{1}{p} \frac{\tr(\Phi_{\gamma_0}^p)}{|\det(I - \mathcal{P}_{\gamma_0}^p)|} e^{- z pt_0}.
\end{equation*} 
Let $\lambda_1,\dots,\lambda_{d_u}$ and $\mu_1,\dots,\mu_{d_s}$ denote the eigenvalues of $\mathcal{P}_{\gamma_0}$ respectively of modulus larger than one and smaller than one. Let then $q_-$ denote the number of eigenvalues of $\mathcal{P}_{\gamma_0}$ in $(-\infty,-1)$. Then, for every $p \geq 1$, we have
\begin{equation*}
\begin{split}
|\det(I - \mathcal{P}_{\gamma_0}^p)|^{-1} & = (-1)^{p q_-} \left( \prod_{j = 1}^{d_u} (\lambda_j^p - 1) \prod_{k = 1}^{d_s}(1 - \mu_k^p) \right)^{-1} \\
         & = (-1)^{p q_-} \prod_{j= 1}^{d_u} \lambda_j^{-p} \left( \prod_{j = 1}^{d_u} (1 - \lambda_j^{-p}) \prod_{k = 1}^{d_s}(1 - \mu_k^p) \right)^{-1} \\
         & = \sum_{\substack{k_1,\dots,k_{d_u} \geq 1 \\ \ell_1,\dots,\ell_{d_s} \geq 0}} \left( (-1)^{q_-} \prod_{j= 1}^{d_u} \lambda_j^{-k_j} \prod_{k = 1}^{d_s} \mu_k^{\ell_k} \right)^p
\end{split}
\end{equation*}
Let then $\mathfrak{R}$ be the set of numbers of the form
\begin{equation}\label{eq:explicit_resonances}
\zeta (-1)^{q_-} \prod_{j= 1}^{d_u} \lambda_j^{-k_j} \prod_{k = 1}^{d_s} \mu_k^{\ell_k} 
\end{equation}
with $k_1,\dots,k_{d_u} \geq 1, \ell_1,\dots,\ell_{d_s} \geq 0$ and $\zeta$ an eigenvalue of $\Phi_{\gamma_0}$. The elements of $\mathfrak{R}$ have a multiplicity which is the number of ways to write them in the form \eqref{eq:explicit_resonances}, taking into account the multiplicities of the eigenvalues of $\Phi_{\gamma_0}$. This notation and the computation above yields for $\re z \gg 1$:
\begin{equation*}
\begin{split}
\sum_{\gamma \in \mathfrak{P}} \frac{T_\gamma^{\#}}{T_\gamma}\frac{\tr(\Phi_{\gamma})}{|\det(I - \mathcal{P}_\gamma)|} e^{- z T_\gamma} & = \sum_{\rho \in \mathfrak{R}} \sum_{p \geq 1} \frac{1}{p} (\rho e^{-z t_0})^p \\
        & = - \sum_{\rho \in \mathfrak{R}} \ln(1 - \rho e^{-z t_0}),
\end{split}
\end{equation*} 
where in the summation over $\mathfrak{R}$, the terms are repeated according to their multiplicities. It follows that for $\re z \gg 1$ we have
\begin{equation}\label{eq:periodic_determinant}
d(z) = \prod_{\rho \in  \mathfrak{R}} (1 - \rho e^{-z t_0}).
\end{equation}
From the definition of $\mathfrak{R}$, we find that the numbers of element of $\mathfrak{R}$ (counted with multiplicities) larger than some small $r > 0$ is a $\mathcal{O}(|\ln r|^{n-1})$. It follows from Lemma \ref{lemma:simple_bound} that the infinite product in \eqref{eq:periodic_determinant} actually converges for every $z \in \mathbb{C}$. Hence, we retrieve that $d(z)$ has a holomorphic extension to $\mathbb{C}$. Moreover, Lemma \ref{lemma:simple_bound} gives the bound
\begin{equation*}
|d(z)| \leq C \exp(C |\re z|^n)
\end{equation*}
for some $C > 0$ and every $z \in \mathbb{C}$, which is better than the bound from Theorem \ref{theorem:bound_determinant}. The bound on the number of resonances follows from Lemma \ref{lemma:bound_entire_function} (with ``$g = 0$'') or just by noticing that the resonances are, up to a factor $t_0^{-1}$, the logarithms of the elements of $\mathfrak{R}$.

\section{Markov partitions}\label{section:Markov_partition}

In this section, we discuss Markov partitions. They will play a key role in the proofs of Theorems \ref{theorem:bound_resonances} and \ref{theorem:bound_determinant}. We start in \S \ref{subsection:definition_Markov} with very general notions, but with a small twist that we will need later to deal with Gevrey regularity. We recall then in \S \ref{subsection:symbolic_coding} how to use a Markov partition to construct a symbolic flow semi-conjugated to a hyperbolic flow. It is well-known that this semi-conjugacy introduce errors when counting periodic orbits. There is however an argument of Bowen to fix this issue. We recall this argument in \S \ref{subsection:exact_counting} and use it to reduce the proofs of Theorems \ref{theorem:bound_resonances} and \ref{theorem:bound_determinant} to a statement about some symbolic dynamical systems (Lemma \ref{lemma:individual_estimates}) that we will prove in \S \ref{section:dynamical_determinant}.

Our exposition of Markov partitions and related topics is mostly based on \cite{bowen_markov}. Other possible references include \cite{chernov_markov_approximation} and \cite{fisher_hasselblatt}.

\subsection{Definition of Markov partitions}\label{subsection:definition_Markov}

From now on, let $\phi = (\phi_t)_{t \in \mathbb{R}}$ be a $G^s$ flow on a $G^s$ manifold $M$, and $K \subseteq M$ a basic set for $\phi$. We assume that $K$ is not reduced to a single periodic orbit (this case has been dealt with in \S \ref{section:periodic_orbit}). Let $d$ denote the distance associated to the smooth Riemannian metric on $M$. For $x \in K$ and $\epsilon_{0} > 0$, let us define the local strong stable and unstable manifold of $x$ as
\begin{equation}\label{eq:stable_manifold}
\begin{split}
W_{\epsilon_0}^{\st}(x) = \Big\{ y \in M : d(\phi_t(x),\phi_t(y)) & \underset{t \to + \infty}{\to} 0 \\ & \textup{ and } \forall t \in \mathbb{R}_+: d(\phi_t(x),\phi_t(y)) < \epsilon_0\Big\}
\end{split}
\end{equation}
and
\begin{equation}\label{eq:unstable_manifold}
\begin{split}
W_{\epsilon_0}^{\un}(x) = \Big\{ y \in M : d(\phi_{-t}(x),\phi_{-t}(y)) & \underset{t \to + \infty}{\to} 0 \\ & \textup{ and } \forall t \in \mathbb{R}_+: d(\phi_{-t}(x),\phi_{-t}(y)) < \epsilon_0 \Big\}.
\end{split}
\end{equation}
If $\epsilon_0 > 0$ is small enough, then these sets are indeed smooth submanifolds of $M$, tangent respectively to $E_{\st}(x)$ and $E_{\un}(x)$ at $x$. However, the dependence of these manifolds on $x$ is not smooth in general.

From now on, we fix $\epsilon_0 > 0$ small enough so that \eqref{eq:stable_manifold} and \eqref{eq:unstable_manifold} are smooth manifolds and the following property \cite[Proposition 6.2.2 and Theorem 6.2.7]{fisher_hasselblatt} holds true: there is $\delta_0 > 0$ such that for every $x,y \in K$ with $d(x,y) \leq \delta_0$ there is a unique $t \in [ - \epsilon_0,\epsilon_0]$ such that the manifolds $W_{\epsilon_0}^{\st}(\phi_t(x))$ and $W_{\epsilon_0}^{\un}(y)$ intersect. Moreover, there is a unique point of intersection $[x,y]$, and it belongs to $K$. The map $ (x,y) \mapsto [x,y]$ is continuous from $\set{(x,y) \in K^2: d(x,y) \leq \delta_0}$ to $M$.

The basic blocks for Markov partitions are so-called rectangles. If for three-dimensional Anosov flows, one may construct a Markov partition for which rectangles actually look like rectangles \cite[Theorem 9.1]{chernov_markov_approximation}, the situation is in general more complicated.

\begin{definition}\label{definition:rectangle}
Let $R$ be a closed subset of $K$. We say that $R$ is a rectangle if the following holds:
\begin{enumerate}[label=(\roman*)]
\item $R$ has diameter less than $\delta_0$;
\item there are a smooth disk $D$ containing $R$ and $\zeta > 0$ small such that the map
\begin{equation*}
\begin{array}{ccc}
D \times (- \zeta,\zeta) & \to & M \\
(x,t) & \to & \phi_t(x)
\end{array}
\end{equation*}
is a diffeomorphism on its image $U$;
\item if $(\pi,\tau): U \to D \times (- \zeta,\zeta)$ denote the inverse of the diffeomorphism from the previous item, then for every $x,y \in R$ we have $[x,y] \in U$ and $\pi([x,y]) \in R$.
\end{enumerate}
The point $\pi([x,y])$ in the last item will be denoted by $[x,y]_R$ (it does not depend on the choice of $D$ and $\zeta$). We will also denote by $R^*$ the interior of $R$ as a subset of $D$ (that does not depend on $D$ either). We say that $R$ is proper if $R = \overline{R^*}$.

If $R$ is a rectangle and $x \in R$, we define
\begin{equation*}
W_R^{\st}(x) = \set{[x,y]_R : y \in R} \textup{ and } W_R^{\un}(x) = \set{[y,x]_R : y \in R}.
\end{equation*}
\end{definition}

We can now recall the definition of a Markov partition.

\begin{definition}\label{definition:markov}
Let $\kappa > 0$. Let $(R_i)_{i \in I}$ be a finite family of proper rectangles such that every flow line for $\phi$ in $K$ intersects $\Omega \coloneqq \bigcup_{i \in I} R_i$ in positive and negative time. Let $T: \Omega \to \Omega$ be the first return map for $\phi$ and $\Omega^{*} = \cap_{m \in \mathbb{Z}} T^m( \bigcup_{i \in I} R_i^*)$. We say that $(R_i)_{i \in I}$ is a Markov partition for $\phi$ of size $\kappa > 0$ if\footnote{If $I$ is a subset of $\mathbb{R}$ and $A$ a subset of $M$, we write $\phi_I(A) = \set{\phi_t(x) : t \in I, x \in A}$.}:
\begin{enumerate}[label=(\roman*)]
\item $K = \phi_{[ - \kappa,0]}(\Omega)$;
\item for each $i \in I$, the rectangle $R_i$ is contained in a smooth disk $D_i$ of diameter less than $\kappa$ with the properties from Definition \ref{definition:rectangle};
\item if $i$ and $j$ are distinct elements of $I$ then at least one of the two sets $\phi_{[-\kappa,0]} (D_i) \cap D_j$ and $\phi_{[0,\kappa]}(D_i) \cap D_j$ is empty; \label{item:disjoint}
\item if $i,j \in I$ and $x \in R_{i,j} \coloneqq \overline{\set{ y \in \Omega^* : y \in R_i \textup{ and } T y  \in R_j}}$ then $W_{R_i}^{\st}(x) \subseteq R_{i,j}$;
\item if $i,j \in I$ and $x \in \overline{\set{ y \in \Omega^* : y \in R_i \textup{ and } T^{-1} y  \in R_j}}$ then $W_{R_i}^{\un}(x) \subseteq \overline{\set{ y \in \Omega^* : y \in R_i \textup{ and } T^{-1} y  \in R_j}}$.
\end{enumerate}
We shall say that $(R_i)_{i \in I}$ is a $G^s$ Markov partition if in addition the disks $D_i$'s may be chosen $G^s$.
\end{definition}

The existence of Markov partitions of arbitrarily small size for basic sets of hyperbolic flows has been established by Bowen \cite[Theorem (2.5)]{bowen_markov}. When the manifold $M$ is $G^s$, it is not very hard to deduce the existence of $G^s$ Markov partitions. Quite notably, the fact that the flow $\phi$ is $G^s$ or not does not play a role in this matter. To construct a $G^s$ Markov partition (of arbitrarily small size), one can just start from a standard Markov partition and use Lemma \ref{lemma:gevreyification} to replace the smooth disks by $G^s$ disks. This modification only involves arbitrarily small translation by the flow, so that the result is still a Markov partition (maybe of a slightly larger size).

\subsection{Symbolic coding}\label{subsection:symbolic_coding}

Let us now recall how one can use a Markov partition to code a hyperbolic flow using a subshift of finite type. Let us fix for this subsection a $G^s$ Markov partition $(R_i)_{i \in I}$, associated to $\phi$ and $K$, of small size $\kappa > 0$. We will only work with Markov partitions of sufficiently small size so that the results from \cite{bowen_markov} apply.

Let $\Omega = \bigcup_{i \in I} R_i$ and $T : \Omega \to \Omega$ be the first return map for $\phi$. Define the adjacency matrix $ A \in \set{0,1}^{I \times I}$ by
\begin{equation*}
A_{i,j} = \begin{cases} 1 & \textup{ if there is } x \in R_i^* \textup{ such that } T x \in R_j^*, \\ 0 & \textup{ otherwise.} \end{cases}
\end{equation*}
We let then $\Sigma_A$ be the set of bi-infinite paths in the graph whose adjacency matrix is $A$:
\begin{equation*}
\Sigma_A = \set{ (i_m)_{m \in  \mathbb{Z}} \in I^{\mathbb{Z}} : \forall m \in \mathbb{Z}, A_{i_m,i_{m+1}} = 1}.
\end{equation*}
Notice that the shift $\sigma :(i_m)_{m \in \mathbb{Z}} \mapsto (i_{m+1})_{m \in \mathbb{Z}}$ let $\Sigma_A$ invariant. We endow $\Sigma_A$ with the metric
\begin{equation*}
d((i_m)_{m \in \mathbb{Z}}, (j_m)_{m \in \mathbb{Z}}) = 2^{- m_0}, \quad m_0 = \inf \set{ m \in \mathbb{N} : i_m \neq j_m \textup{ or } i_{-m} \neq j_{-m}},
\end{equation*}
with the convenction that $2^{- \infty} = 0$.

Notice that if $i,j \in I$ are such that $A_{i,j} = 1$ then the set $R_{i,j}$ from Definition \ref{definition:markov} is non-empty. Moreover, the restriction of the first return map $T$ to $\set{x \in R_i^* : T x \in R_j^*} \subseteq R_{i,j}$ extends to a continuous map $T_{i,j} : R_{i,j} \to R_j$. In addition, it follows from the implicit function theorem that $T_{i,j}$ is of the form $x \mapsto \phi_{t_{i,j}(x)}(x)$ where $t_{i,j}$ is the restriction of a $G^s$ function on $D_i$ to $R_{i,j}$ (this is where we use the fact that $D_i$ is a $G^s$ disk).

There is a surjective H\"older-continuous map $\pi_A : \Sigma_A \to \Omega$ such that for every $\mathbf{i} = (i_m)_{m \in \mathbb{Z}} \in \Sigma_A$ we have
\begin{equation}
\pi_A(\mathbf{i}) \in R_{i_0,i_1} \textup{ and } T_{i_0,i_1}(\pi(\mathbf{i})) = \pi(\sigma(\mathbf{i})).
\end{equation}
See \cite[\S 2]{bowen_markov}, in particular Lemma (2.2). Hence, letting $\tau$ be the function defined on $\Sigma_A$ by $\tau(\mathbf{i}) = t_{i_0,i_1}(\pi_A(\mathbf{i}))$, one finds that $\phi$ is semi-conjugated \cite[Definition 1.3.1]{fisher_hasselblatt} to the suspension flow of $(\Sigma_A,\sigma)$ with roof function $\tau$. The issue is that this semi-conjugacy may induce errors when counting periodic orbits, but this can be fixed as we will see in \S \ref{subsection:exact_counting}. 

\begin{remark}\label{remark:suspension_flow}
As the notion just appeared, let us recall the definition of a suspension flow. Let $X$ be a set and $T : X \to X$ a bijection. Let $\tau :X \to \mathbb{R}_+^*$ be a function. The suspension of $(X,T)$ with roof function $\tau$ is the quotient $\widetilde{X}$ of $X \times \mathbb{R}$ under the action of $\mathbb{Z}$ given by $1 \cdot (x,t) = (Tx , t - \tau(x))$.

Notice that the translation flow on $X \times \mathbb{R}$ given by $\psi_t(x,t') = (x,t+t')$ for $x \in X, t,t' \in \mathbb{R}$ commutes with the action of $\mathbb{Z}$ on $X \times \mathbb{R}$. Hence, $\psi$ induces a flow $\tilde{\psi}$ on $\widetilde{X}$, that we call the suspension flow of $(X,T)$ with roof function $\tau$.

We are particularly interested in the periodic orbits for $\tilde{\psi}$. If $m \geq 1$, a periodic orbit of length $m$ for $T$ is a set of the form $\set{x,T x,\dots,T^{m-1} x}$ with $x \in X$ such that $T^m x = x$. If $\set{x,\dots,T^{m-1} x}$ is a periodic orbit of length $m$ for $T$ then the orbit of $\tilde{\psi}$ of length $\sum_{k = 0}^{m-1} \tau(T^k x)$ passing through the projection of $(x,0)$ to $\widetilde{X}$ is periodic. Moreover, this periodic orbit is primitive if and only if $m$ is the minimal period of $T$ (i.e. if $T^k x \neq x$ for $k = 1,\dots,m-1$). This construction describes exactly all the periodic orbits for $\tilde{\psi}$.

Notice that with our definitions if $x \in X$ is a periodic point for $T$ with minimal period $m_0$, then $\set{x,T x,\dots, T^{m_0 - 1}x}$ is a periodic of length $m$ for every multiple $m$ of $m_0$. The associated orbit for $\tilde{\psi}$ depends on the choice of $m$ (but they are all multiples of the primitive periodic orbits obtained by taking $m = m_0$).
\end{remark}

We want now to upgrade our Markov partition in prevision for the proof of Lemma \ref{lemma:value_trace}. Similar conditions are implicitly imposed in \cite{bowen_markov}, we detail this property in order to a lift a possible ambiguity.

\begin{lemma}\label{lemma:better_partition}
Let $T_0$ denote the length of the shortest periodic orbit in $K$ for $\phi$. Let $\kappa_0 > 0$. There is a $G^s$ Markov partition $(R_i)_{i \in I}$ for $\phi$ of size $\kappa \in (0,\kappa_0)$ such that the disks $(D_i)_{i \in I}$ in Definition \ref{definition:markov} may be chosen such that for every $i \in I, x \in D_i$ and $t \in [-T_0/2,T_0/2]$ we have $\phi_t(x) \notin D_i$.
\end{lemma}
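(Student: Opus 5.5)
The plan is to start from a $G^s$ Markov partition of small size, as provided by \cite[Theorem (2.5)]{bowen_markov} combined with Lemma \ref{lemma:gevreyification}. We then shrink the disks $D_i$ (keeping $R_i \subseteq D_i$) so that no short flow segment can start and end in the same disk. The key quantitative input is a uniform transversality statement. Since $X$ does not vanish on the compact set $K$, it is nonzero on a compact neighbourhood $V$ of $K$. By compactness and the flow box theorem, there are $\eta>0$ and $\rho>0$ such that, for every $x\in V$, the map $(y,t)\mapsto \phi_t(y)$ restricted to $(B(x,\rho)\cap \Sigma)\times(-\eta,\eta)$ is injective for any disk $\Sigma$ through $x$ that is uniformly transverse to $X$ (angle bounded below) and of diameter $<\rho$. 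Equivalently, if $y\in\Sigma$ and $\phi_t(y)\in\Sigma$ with $|t|<\eta$, then $t=0$.

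The proof then handles two time regimes separately.

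\emph{Short times.} For $t\in[-\eta,\eta]\setminus\{0\}$, the case $t=0$ is trivially allowed; the statement intends $t\neq0$ (or rather that $\phi_t(x)\notin D_i$ for $0<|t|\le T_0/2$). For these times, item (ii) of Definition \ref{definition:rectangle} already gives injectivity of the flow box on $D_i\times(-\zeta,\zeta)$. So $\phi_t(x)\notin D_i$ for $0<|t|<\zeta$, and by the uniformity above we may take $\zeta\ge\eta$ uniform in $i$, provided the disks are uniformly transverse and of diameter $<\kappa\le\rho$. The Gevreyification step of Lemma \ref{lemma:gevreyification} moves the disks only by a $C^1$-small perturbation along the flow, so uniform transversality is preserved.

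\emph{Intermediate times.} For $\eta\le|t|\le T_0/2$, I argue by contradiction with a sequence of partitions of sizes $\kappa_k\to0$. Suppose that for each $k$ there are $x_k\in D_{i_k}$ and $t_k\in[\eta,T_0/2]$ (the negative case is symmetric) with $\phi_{t_k}(x_k)\in D_{i_k}$. Since $\operatorname{diam} D_{i_k}<\kappa_k$ and $D_{i_k}$ contains points of $K$, we have $d(x_k,\phi_{t_k}(x_k))<\kappa_k$ and $x_k$ lies within $\kappa_k$ of $K$. Passing to a limit gives $x\in K$ and $t\in[\eta,T_0/2]$ with $\phi_t(x)=x$. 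This is a periodic orbit in $K$ of period at most $T_0/2$, contradicting the definition of $T_0$. Hence for $\kappa$ small enough the property holds, and we then choose $\kappa<\kappa_0$ as well.

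The main obstacle is making the short-time regime uniform in the partition, i.e. ensuring that the transversality constant and $\zeta$ do not degenerate as $\kappa\to0$ and after the $G^s$ modification. I would handle this by fixing, once and for all, a continuous transverse $G^s$ hyperplane field near $K$ (for instance a $G^s$ approximation of $E_{\un}\oplus E_{\st}$, via Lemma \ref{lemma:approximation_gevrey}). I would then require, in Bowen's construction, that each $D_i$ be a small piece of a disk tangent to within a fixed angle of this field. This is compatible with Bowen's construction, whose disks are small transversal sections, and with Lemma \ref{lemma:gevreyification}, which only reparametrizes along the flow by a $C^1$-small time function.
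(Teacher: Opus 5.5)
\textbf{There is a genuine gap in the short-time regime.} Your long-time step is fine. With partitions of sizes $\kappa_k\to0$ and bad pairs $t_k\in[\eta,T_0/2]$, a limit gives $x\in K$ with $\phi_t(x)=x$ for some $t\in[\eta,T_0/2]$, contradicting the definition of $T_0$. You are also right that the statement must be read with $t\neq0$.

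The contradiction argument needs the short-time regime \emph{uniformly in $k$}. If the $k$-th family of disks only has flow-box injectivity up to times $\zeta^{(k)}\to0$, bad pairs with $t_k\to0$ give nothing in the limit. Your mechanism for this uniformity fails for $n\ge3$. A disk of small diameter whose tangent planes all make an angle bounded below with $X$ (or stay within a fixed angle of a fixed transverse hyperplane field) can still meet a flow line twice at arbitrarily short time distance.

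Here is an example. In flow-box coordinates with $X=\partial_{x_3}$, take the helicoidal strip $\{(r\cos\theta,r\sin\theta,c\theta): r_1<r<r_2,\ 0<\theta<4\pi\}$. It is an embedded disk of diameter at most $2r_2+4\pi c$, and its normal makes an angle at most $\arctan(c/r_1)$ with $\partial_{x_3}$. Yet the points with parameters $(r,\theta)$ and $(r,\theta+2\pi)$ lie on the same flow line at time distance $2\pi c$, which can be as small as you like. For $n>3$, take a product with a small ball in the other transverse variables.

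So no pointwise angle condition yields a uniform $\eta$. You need a global condition, for instance that the disks are bounded-slope graphs over fixed transversals in fixed flow boxes, or sub-disks of one fixed finite family of transverse disks. Applying Bowen's theorem separately for each $\kappa_k$ gives no such control, so you would have to reopen his construction. The $G^s$ modification, by contrast, is harmless here. Translating a disk along the flow by a time function with values in $(-\epsilon,\epsilon)$ shrinks its short-time injectivity window by at most $2\epsilon$, with no $C^1$-smallness needed.

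\textbf{How the paper avoids this.} It never replaces the ambient disks. It fixes one $G^s$ Markov partition and covers each $R_i$ by finitely many sub-disks $D\subseteq D_i$ with $\phi_t(D)\cap D=\emptyset$ for $0<|t|\le 3T_0/4$. Short times come from the transversality of the fixed $D_i$; long times come from $T_0$ and continuity, which is a local form of your compactness argument. It then uses the H\"older continuity of $\pi_A$ to pass to the cylinder rectangles $\pi_A(\mathcal{R}_\omega)$, $\omega\in I_N$, each contained in one such sub-disk. Small distinct flow translations then restore the disjointness property (iii) of a Markov partition. Requiring all your disks to lie inside the disks of a single fixed partition in this way is exactly what closes your gap.
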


\begin{proof}
Start with a $G^s$ Markov partition $(R_i)_{i \in I}$ of size $\kappa_1 \in (0,\kappa_0)$ (whose existence is guaranteed by \cite[Theorem 2.5]{bowen_markov} and Lemma \ref{lemma:gevreyification}). Define the adjacency matrix $A$, the subshift $\Sigma_A$ and the map $\pi_A$ as above. For each $N \geq 1$, let $I_N$ denote the set of words $\omega_{-N} \dots \omega_0 \dots \omega_{N}$ of length $2N+1$ such that $A_{\omega_j,\omega_{j+1}} =1$ for $j = -N,\dots,N-1$. For $\omega \in I_N$ define then the sets
\begin{equation*}
\mathcal{R}_{\omega} = \set{(x_j)_{j \in \mathbb{Z}} \in \Sigma_A : x_j = \omega_j \textup{ for } j = -N,\dots,N} \textup{ and } \widetilde{R}_\omega = \pi_A(\mathcal{R}_\omega).
\end{equation*}
Now, notice that for every $i \in I$, the compact rectangle $R_i$ may be covered by a finite number of disks $D \subseteq D_i$ such that $\phi_t(D) \cap D = \emptyset$ for every $t \in [-3 T_0/4,3 T_0/4]$: we deal with small $t$'s by transversality of $D_i$ and the flow and with large $t$ by the definition of $T_0$ and continuity. Now, since $\pi_A$ is H\"older continuous, for $N$ large enough all the rectangles $\widetilde{R}_\omega$ for $\omega \in I_N$ are contained in such a disk. Hence, we would like to use $(\widetilde{R}_\omega)_{\omega \in I_N}$ as a new Markov partition, but this family of rectangles may miss property \ref{item:disjoint} in Definition \ref{definition:rectangle}. We fix this issue by translating these rectangles along the flow (for very small distinct times), and we get a Markov partition of size slightly larger than $\kappa_1$ with the required property.
\end{proof}

\begin{remark}\label{remark:really_small}
From now on, we fix a $G^s$ Markov partition $(R_i)_{i \in I}$ of small size $\kappa >0$ which in addition satisfies Lemma \ref{lemma:better_partition}, with disks $(D_i)_{i \in I}$. We will in addition assume that $2 \kappa > 0$ is smaller than the $\nu$ given by Proposition \ref{proposition:expansivity} applied with $\epsilon = T_0/2$. All the discussion from \S \ref{subsection:symbolic_coding} apply to this Markov partition, and we will use the notations introduced there.
\end{remark}

\subsection{Exact counting of periodic orbits}\label{subsection:exact_counting}

We explain now a method due to Bowen \cite[\S 5]{bowen_markov}, based on previous work by Manning \cite{manning_zeta} in the discrete-time case, to count periodic orbits of a hyperbolic flow without error, using a Markov partition. To do so, we study a family of subshifts of finite type instead of only one. We keep using the notations introduced in the previous subsections.

If $i \in I$, define 
\begin{equation*}
R_i^+ = \set{\phi_t(\pi(\mathbf{i})): \mathbf{i} = (i_m)_{m \in \mathbb{Z}} \in \Sigma_A, i_0 = i, t \in [0, \tau(\mathbf{i})]}.
\end{equation*}
If $\mathbf{k} = (k_1,\dots,k_\ell)$ is a $\ell$-uple of positive integers for some $\ell$, define
\begin{equation*}
\begin{split}
Q_{\mathbf{k}} = \Big\{(V_1,\dots,V_\ell) \in \mathcal{P}(I)^{\ell} : V_1,\dots,& V_\ell \textup{ pairwise disjoint} \\ & \textup{and } \# V_j = k_j \textup{ for } j = 1,\dots,\ell \Big\},
\end{split}
\end{equation*}
where $\mathcal{P}(I)$ denote the power set of $I$. Notice that the set $Q_{\mathbf{k}}$ is empty unless $\sum_{j = 1}^\ell k_j \leq \# I$. If $\mathbf{V} = (V_1,\dots,V_\ell) \in Q_{\mathbf{k}}$, then we define $\Un(\mathbf{V}) = \bigcup_{j = 1}^\ell V_j$. We define then
\begin{equation*}
\begin{split}
I_{\mathbf{k}} = \Bigg\{(\mathbf{V},i) \in Q_{\mathbf{k}} \times I : i & \in \Un(\mathbf{V}) \textup{ and there exist } x \in R_i \\ & \textup{ and } \epsilon > 0 \textup{ such that } \phi_{[0,\epsilon]}\set{x} \subseteq \bigcap_{j \in \Un(\mathbf{V})} R_j^+\Bigg\}. 
\end{split}
\end{equation*}
With $I_{\mathbf{k}}$ comes an adjacency matrix $A_{\mathbf{k}} \in \set{0,1}^{I_{\mathbf{k}} \times I_{\mathbf{k}}}$ defined in the following way: $A_{\mathbf{k}}((\mathbf{U},i), (\mathbf{V},j) ) = 1$ if there is $j_0 \in \set{1,\dots, \ell}$ such that:
\begin{itemize}
\item $U_m = V_m$ for $m \in \set{1,\dots,\ell} \setminus \set{j_0}$;
\item there are $i' \in I$ and $W \subseteq I$ such that $A_{i',j} = 1, U_{j_0} = W \cup \set{i'}$ and $V_{j_0} = W \cup \set{j}$.
\end{itemize}
We will let $\Sigma_{\mathbf{k}}$ denote the subshift of finite type associated to the adjacency matrix $A_{\mathbf{k}}$. Notice that $\Sigma_{(1)}$ identifies with $\Sigma_A$. Moreover, there is a natural map $p_{\mathbf{k}} : \Sigma_{\mathbf{k}} \to \Sigma_A$, see \cite[Lemma (5.2)]{bowen_markov}. There is then a function $t_{\mathbf{k}} : \Sigma_{\mathbf{k}} \to (0,\kappa]$ such that for every $x \in \Sigma_{\mathbf{k}}$ we have $\phi_{t_{\mathbf{k}}(x)} \circ \pi_A \circ p_{\mathbf{k}} (x) = \pi_A \circ p_{\mathbf{k}} \circ \sigma (x)$, see \cite[Lemma 5.3]{bowen_markov}. Hence, if $\widetilde{\Sigma}_{\mathbf{k}}$ denote the suspension of $(\Sigma_{\mathbf{k}}, \sigma)$ with roof function $t_{\mathbf{k}}$ and $\psi^{\mathbf{k}}$ the associated suspension flow (see Remark \ref{remark:suspension_flow}), then there is a continuous map\footnote{The map $\rho_{\mathbf{k}}$ is not surjective in general, and thus not a semi-conjugacy.} $\rho_{\mathbf{k}} : \widetilde{\Sigma}_{\mathbf{k}} \to M$ such that $\rho_{\mathbf{k}}(\psi^{\mathbf{k}}_t(x)) = \phi_t(\rho_{\mathbf{k}}(x))$ for every $t \in \mathbb{R}$ and $x \in \widetilde{\Sigma}_{\mathbf{k}}$. Let $\mathfrak{P}_{\mathbf{k}}$ denote the set of periodic orbits of $\psi_{\mathbf{k}}$. To any $\gamma$ in $\mathfrak{P}_{\mathbf{k}}$ one may associate its image\footnote{Notice that $\gamma$ and $\rho_{\mathbf{k}}(\gamma)$ have the same length, but $\gamma$ could be primitive while $\rho_{\mathbf{k}}(\gamma)$ is not.} by $\rho_{\mathbf{k}}$ that we will denote by $\rho_{\mathbf{k}}(\gamma) \in \mathfrak{P}$. 

In the following, we will denote by $\mathcal{N}$ the set of all tuples $\mathbf{k}$ of positive integers such that $\Sigma_{\mathbf{k}}$ is non-empty (notice that the set $\mathcal{N}$ is finite). Moreover, for $\mathbf{k} \in \mathcal{N}$, we denote by $\length(\mathbf{k})$ the integer $\ell \geq 1$ such that $\mathbf{k}$ is a $\ell$-uple. We can now state a counting formula that is crucial for the proof of Theorems~ \ref{theorem:bound_resonances} and \ref{theorem:bound_determinant}.

\begin{lemma}
For every $\gamma \in \mathfrak{P}$, we have
\begin{equation}\label{eq:correction_counting}
\sum_{\mathbf{k} \in \mathcal{N}} (-1)^{\length(\mathbf{k})+1} |\rho_{\mathbf{k}}^{-1}(\set{ \gamma})| = 1.
\end{equation}
Here, $|\rho_{\mathbf{k}}^{-1}(\set{ \gamma})|$ is just the number of orbits for $\psi_{\mathbf{k}}$ that are mapped to $\gamma$ by $\rho_{\mathbf{k}}$.
\end{lemma}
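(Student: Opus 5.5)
We follow Bowen's argument \cite[\S 5]{bowen_markov} (itself adapted from Manning \cite{manning_zeta}), reducing \eqref{eq:correction_counting} to an inclusion--exclusion identity. Fix $\gamma \in \mathfrak{P}$ of length $T_\gamma$ and a point $x$ on $\gamma$. For $t \in \mathbb{R}$, let
\begin{equation*}
J(t) = \set{ i \in I : \phi_t(x) \in R_i^+ }.
\end{equation*}
This is the set of symbols whose ``flow boxes'' contain the point $\phi_t(x)$. It is piecewise constant in $t$ and $T_\gamma$-periodic.

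First we identify the periodic orbits of $\psi^{(1)}$ over $\gamma$. These are the periodic sequences in $\Sigma_A$ whose suspension projects to $\gamma$. By the expansivity statement (Proposition \ref{proposition:expansivity}, together with Remark \ref{remark:really_small} and Lemma \ref{lemma:better_partition}), each such periodic orbit of $\psi^{(1)}$ is determined by a continuous choice, along $\gamma$, of a branch in the sets $J(t)$. Concretely, it is a ``thread'' $t \mapsto i(t) \in J(t)$, where transitions occur exactly according to the adjacency matrix $A$, and which closes up after time $T_\gamma$ up to a cyclic shift. Lemma \ref{lemma:better_partition} guarantees that a thread cannot revisit the same disk within time $T_0/2$. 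Hence the length of the orbit of $\psi^{(1)}$ is exactly $T_\gamma$, and counting threads is well defined. Following the pattern of Bowen's Lemma (5.2)--(5.3), the threads going around $\gamma$ once are permuted by the return map after time $T_\gamma$. In this way we obtain a permutation $\sigma_\gamma$ of a finite set $S$ of threads (of cardinality $\#J(0)$ essentially).

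Next we interpret $\Sigma_{\mathbf{k}}$. A point of $\Sigma_{\mathbf{k}}$ over $\gamma$ is an $\ell$-tuple of pairwise disjoint unordered sets of threads, of sizes $k_1,\dots,k_\ell$, evolving according to $A_{\mathbf{k}}$. A periodic orbit of $\psi^{\mathbf{k}}$ mapped to $\gamma$ by $\rho_{\mathbf{k}}$ (of length exactly $T_\gamma$) then corresponds to an ordered tuple $(V_1,\dots,V_\ell)$ of disjoint subsets of $S$ with $\#V_j = k_j$, each $\sigma_\gamma$-invariant. The orbit is considered up to the cyclic action, which we have to check does not introduce extra identifications: the orbit has length $T_\gamma$ and the tuple is fixed by $\sigma_\gamma$. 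Unwinding the definitions (the condition $i \in \Un(\mathbf{V})$ in $I_{\mathbf{k}}$ only marks a base symbol), the number $|\rho_{\mathbf{k}}^{-1}(\set{\gamma})|$ equals the number of ordered tuples of disjoint nonempty $\sigma_\gamma$-invariant subsets of $S$ with the prescribed cardinalities. A subtle point to be verified carefully is the role of non-primitive $\gamma$ and of the factor $i$: I expect the correct statement to be that $|\rho_{\mathbf{k}}^{-1}(\set{\gamma})|$ counts tuples of disjoint sets each being a union of cycles of $\sigma_\gamma$. This is the main obstacle. It requires the careful dictionary between orbits of $\psi^{\mathbf{k}}$ and invariant sets, including checking that distinct orbits of $\psi^{\mathbf{k}}$ give distinct tuples. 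This is where expansivity and the choice $2\kappa < \nu$ are used.

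Finally we carry out the combinatorial identity. Let $c \geq 1$ be the number of cycles of $\sigma_\gamma$; we have $c \geq 1$ because $\pi_A$ is surjective, so $\gamma$ has at least one coding. Summing over $\mathbf{k}$, the alternating sum becomes
\begin{equation*}
\sum_{\ell \geq 1} (-1)^{\ell+1} \#\set{\textup{ordered } \ell\textup{-tuples of disjoint nonempty unions of cycles}} .
\end{equation*}
Grouping by the union $U$ of the tuple, a nonempty set of $m$ cycles, the inner sum counts ordered set partitions of an $m$-element set with sign $(-1)^{\ell+1}$. This sum equals $(-1)^{m+1}$, by the standard identity $\sum_\ell (-1)^\ell \ell!\, S(m,\ell) = (-1)^m$. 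Hence the total is
\begin{equation*}
\sum_{m=1}^{c} \binom{c}{m} (-1)^{m+1} = 1 - (1-1)^c = 1,
\end{equation*}
which proves \eqref{eq:correction_counting}.
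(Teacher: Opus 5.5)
\textbf{There is a genuine gap, at exactly the point you flagged.} Write $\gamma=\gamma_0^p$ with $\gamma_0$ primitive of length $\tau_0$. Let $P$ be the permutation of threads obtained by going once around $\gamma_0$, so that your $\sigma_\gamma$ is $P^p$.

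A tuple $\mathbf{V}$ fixed by $P^p$ does give an orbit of $\psi^{\mathbf{k}}$ of length $p\tau_0$ over $\gamma_0$. However, if $\mathbf{V}$ has minimal period $d\mid p$ under $P$, then $\mathbf{V},P\mathbf{V},\dots,P^{d-1}\mathbf{V}$ are all fixed by $\sigma_\gamma$. They are the successive states of the \emph{same} orbit, which passes $d$ times over $x$. So $|\rho_{\mathbf{k}}^{-1}(\set{\gamma})|$ is the number of $\langle P\rangle$-orbits on the set of $\sigma_\gamma$-fixed tuples, not the number of fixed tuples. The cyclic action does create identifications as soon as $p\ge 2$.

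For example, suppose $\gamma_0$ has two threads exchanged by $P$ and $\gamma=\gamma_0^2$:
\begin{itemize}
\item for $\mathbf{k}=(1)$ there is one orbit (the primitive one of length $2\tau_0$), while you count $2$;
\item for $\mathbf{k}=(1,1)$ there is one orbit, while you count $2$ ordered tuples;
\item for $\mathbf{k}=(2)$ both counts are $1$.
\end{itemize}
Your alternating sum $2+1-2$ and the true one $1+1-1$ both equal $1$. That agreement is not a consequence of your argument.

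\textbf{How to repair it.} Apply your inclusion--exclusion to every power of $P$. It shows that the signed number $F(q)$ of tuples fixed by $P^q$ equals $1$ for every $q\ge 1$. Then you have two options.
\begin{itemize}
\item Apply Burnside's lemma to the action of $\mathbb{Z}/p\mathbb{Z}$ (generated by $P$) on $\Fix(P^p)$, separately for each $\mathbf{k}$, and sum with signs. This gives a signed orbit count of $\frac1p\sum_{j=0}^{p-1}F(\gcd(j,p))=1$.
\item Let $M(d)$ be the signed number of primitive orbits covering $\gamma_0$ exactly $d$ times. M\"obius-invert $F(p)=\sum_{d\mid p} d\,M(d)$ to get $M(d)=\delta_{d,1}$, which is exactly Bowen's identity \eqref{eq:bowen_counting_formula}. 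Then sum over $d\mid p$ as the paper does.
\end{itemize}

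\textbf{The dictionary itself is also only asserted.} You would need to prove:
\begin{itemize}
\item distinct codings over a periodic orbit never share a current symbol, so that threads are well defined;
\item the sequences of $\Sigma_{\mathbf{k}}$ lying over $\gamma$ are exactly the geometric configurations of disjoint groups of threads, including the role of the marked symbol $i$ in $(\mathbf{V},i)$.
\end{itemize}
This is the real content of Bowen's \S5. Your appeal to Lemma~\ref{lemma:better_partition} does not supply it.

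The paper avoids redoing all of this. It quotes \eqref{eq:bowen_counting_formula} for primitive orbits and only performs the bookkeeping $|\rho_{\mathbf{k}}^{-1}(\set{\gamma})|=\sum_{d\mid p}N^{\mathbf{k}}_{d\tau_0}$.
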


\begin{proof}
We are going to use an intermediate result from the proof of \cite[Theorem (5.4)]{bowen_markov}. Let $\gamma \in \mathfrak{P}$. Write $\gamma = \gamma_0^p$ with $\gamma_0$ a primitive periodic orbit and $p \geq 1$. Let $\tau_0$ denote the length of $\gamma_0$. For each $\tau \in (0,+ \infty)$ and $\mathbf{k} \in \mathcal{N}$, let $N_\tau^{\mathbf{k}}$ denote the number of primitive periodic orbits in $\widetilde{\Sigma}_{\mathbf{k}}$ whose image by $\rho_{\mathbf{k}}$ is a multiple of $\gamma_0$ and whose period is $\tau$. From the first equation in the proof of \cite[Theorem (5.4)]{bowen_markov}, we know that for every $\tau \in (0,+ \infty)$, we have
\begin{equation}\label{eq:bowen_counting_formula}
\sum_{\mathbf{k} \in \mathcal{N}} (-1)^{\length(\mathbf{k})+1} N_{\tau}^{\mathbf{k}} = \begin{cases} 1 & \textup{ if } \tau = \tau_0, \\ 0 & \textup{ otherwise.} \end{cases}
\end{equation}
Now, let $\mathbf{k} \in \mathcal{N}$ and consider an element $\tilde{\gamma}$ of $\rho_{\mathbf{k}}^{-1}(\set{ \gamma})$. There is a primitive periodic orbit $\tilde{\gamma}_0$ for the suspension flow on $\widetilde{\Sigma}_{\mathbf{k}}$ and an integer $m$ such that $\tilde{\gamma}= \tilde{\gamma}_0^m$. Since $\tilde{\gamma}_0$ projects in $K$ to a multiple of $\gamma_0$, we see that there is an integer $d \geq 1$ such that the length of $\tilde{\gamma}_0$ is $d \tau_0$. Consequently, we have $p \tau_0 = m d \tau_0$. Hence, $d$ is a divisor of $p$. Reciprocally, if $\tilde{\gamma}_0$ is a primitive periodic orbit for the suspended flow on $\widetilde{\Sigma}_{\mathbf{k}}$ whose image by $\rho_{\mathbf{k}}$ is $\gamma_0^d$ where $d$ divides $p$, then there is a multiple of $\tilde{\gamma}_0$ in $\rho_{\mathbf{k}}^{-1}(\set{ \gamma})$. Consequently, we have
\begin{equation*}
|\rho_{\mathbf{k}}^{-1}(\set{ \gamma})| = \sum_{d|p} N_{d \tau_0}^{\mathbf{k}}.
\end{equation*}
Summing over $\mathbf{k}$, it follows that
\begin{equation*}
\begin{split}
\sum_{\mathbf{k} \in \mathcal{N}}  (-1)^{\length(\mathbf{k})+1}|\rho_{\mathbf{k}}^{-1}(\set{ \gamma})| & = \sum_{d|p} \sum_{\mathbf{k} \in \mathcal{N}} (-1)^{\length(\mathbf{k})+1} N_{d \tau_0}^{\mathbf{k}} \\ & = 1.
\end{split}
\end{equation*}
Here, we used \eqref{eq:bowen_counting_formula} to see that the term corresponding to $d = 1$ is $1$ and that all other terms are zero.
\end{proof}

In order to take advantage of \eqref{eq:correction_counting}, we associate to $\mathbf{k} \in \mathcal{N}$ a dynamical determinant
\begin{equation*}
d_{\mathbf{k}}(z) = \exp \left( - \sum_{\gamma \in \mathfrak{P}_{\mathbf{k}}} \frac{T_{\rho_{\mathbf{k}}(\gamma)}^{\#}}{T_{\rho_{\mathbf{k}}(\gamma)}}\frac{\tr(\Phi_{\rho_{\mathbf{k}}(\gamma)})}{|\det(I - \mathcal{P}_{\rho_{\mathbf{k}}(\gamma)})|} e^{- z T_{\rho_{\mathbf{k}}(\gamma)}} \right),
\end{equation*}
which is a priori only defined for $\re z \gg 1$. However, we will prove:

\begin{lemma}\label{lemma:individual_estimates}
For every $\mathbf{k} \in \mathcal{N}$, the dynamical determinant $d_{\mathbf{k}}$ has a holomorphic continuation to $\mathbb{C}$. Moreover, there is a constant $C > 0$ such that for every $z \in \mathbb{C}$ we have
\begin{equation*}
|d_{\mathbf{k}}(z)| \leq C \exp(C |z|^{1 + (n-1) s}).
\end{equation*}
\end{lemma}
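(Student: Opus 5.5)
We will prove Lemma \ref{lemma:individual_estimates} following Rugh's strategy. The idea is to express $d_{\mathbf{k}}$ through Fredholm determinants of transfer operators. These operators are associated with a system of open $G^s$ hyperbolic maps between the disks $D_i$, and the spectral parameter $z$ enters only through the weight $e^{-z t_{i,j}}$.

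\textbf{Step 1: the system of open maps.} For each vertex $(\mathbf{V},i) \in I_{\mathbf{k}}$ we take a copy of a slightly enlarged $G^s$ disk $\widetilde{D}_i \supseteq D_i$, and we work in $G^s$ coordinates in which the stable and unstable directions are roughly horizontal and vertical. For each edge with $A_{\mathbf{k}} = 1$ we use the $G^s$ map $T_{i,j} = \phi_{t_{i,j}(\cdot)}(\cdot)$, followed by the projection $\pi$ onto $D_j$, defined near $R_{i,j}$. Each such map contracts the stable directions and expands the unstable directions of the $(n-1)$-dimensional disks. We then form the transfer operator
\begin{equation*}
\mathcal{L}_z u (y) = \sum_{x : T x = y} e^{-z t(x)} \, \Phi\text{-weight}(x)\, u(x),
\end{equation*}
acting on sections over the disjoint union of the disks. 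Following \cite[Appendix B]{jezequel_distribution}, we let it act on a Hilbert space of anisotropic ultradistributions. This space is defined through an FBI or Bargmann transform with weight $\exp(\pm |\xi|^{1/s})$: Gevrey-regular in the unstable variables and Gevrey-dual in the stable variables.

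\textbf{Step 2: traces and the determinant identity.} By a standard Atiyah--Bott/Lefschetz computation, $\tr \mathcal{L}_z^m$ is the sum over periodic words of length $m$ in $\Sigma_{\mathbf{k}}$ of $\tr(\Phi)\,e^{-zT}/|\det(I-DT^m)|$. Here $D T^m$ on the transversal disk has the same eigenvalues as $\mathcal{P}_\gamma$. Lemma \ref{lemma:better_partition} and Remark \ref{remark:really_small} (expansivity) ensure that fixed points of the maps correspond exactly to orbits in $\mathfrak{P}_{\mathbf{k}}$ with no spurious contributions. The factor $T^\#/T$ matches the $1/m$ from $\log\det$ after using Remark \ref{remark:suspension_flow}; if $\rho_{\mathbf{k}}$ does not preserve primitivity, this step needs care and may require a finite combination of such determinants. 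In any case we obtain $d_{\mathbf{k}}(z) = \det(I - \mathcal{L}_z)$, or a finite product or quotient of such determinants, first for $\re z \gg 1$. This gives the holomorphic continuation, since $z \mapsto \mathcal{L}_z$ is a holomorphic family of trace class operators.

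\textbf{Step 3: singular value estimates, the main obstacle.} We use $|\det(I-\mathcal{L}_z)| \le \prod_k (1+\sigma_k(\mathcal{L}_z))$, so we need the bound $\sigma_k(\mathcal{L}_z) \le C e^{C|z|} \exp(-c k^{1/((n-1)s)})$. The factor $e^{C|z|}$ comes from $|e^{-zt}| \le e^{\kappa|z|}$, together with the effect of the oscillating factor $e^{-i \operatorname{Im} z\, t(x)}$ on the FBI side. Multiplication by $e^{-zt}$ with $t$ a $G^s$ function shifts frequencies by $O(|z|)$. In the space with weight $\exp(\pm|\xi|^{1/s})$, this costs at most $\exp(C|z|)$, and we must control it uniformly using the Gevrey almost-analytic structure. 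The decay $\exp(-c k^{1/((n-1)s)})$ is the Gevrey analogue of Rugh's analytic bound on $(n-1)$-dimensional disks: hyperbolicity yields a gain $\exp(-c|\xi|^{1/s})$ on the FBI side, and counting phase space cells in dimension $2(n-1)$ converts this gain into that singular value decay. Making the conjugation estimate uniform in $z$ is where we expect the real work to lie.

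\textbf{Step 4: conclusion.} We then bound
\begin{equation*}
\sum_k \log\left(1 + C e^{C|z|} e^{-c k^{1/((n-1)s)}}\right).
\end{equation*}
Only $k \lesssim |z|^{(n-1)s}$ terms contribute, each by $O(|z|)$, and the tail is bounded. This gives $\log|d_{\mathbf{k}}(z)| \le C|z|^{1+(n-1)s}$, as claimed. The paper's Appendix lemmas can be invoked for this summation.
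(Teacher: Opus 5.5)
Your outline has the same architecture as the paper's proof. It uses a system of open $G^s$ maps indexed by $I_{\mathbf{k}}$ and a Gevrey anisotropic Hilbert space built on an escape function of order $|\xi|^{1/s}$. It identifies $\det(I-\mathcal{L}_z)$ through a flat-trace computation, expansivity and Lemma~\ref{lemma:better_partition}. It then obtains a decay $Ce^{C|z|-cm^{1/((n-1)s)}}$, summed as in your Step~4. Two points in Steps~1--2 need fixing.

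First, the map attached to an edge $\alpha\to\beta$ of $\Sigma_{\mathbf{k}}$ must be the flow map $D_{i(\alpha)}\to D_{i(\beta)}$ whose time extends $t_{\mathbf{k}}$ on $R_{\alpha,\beta}$. This is the paper's $F_{\alpha,\beta}$ with time $\tau_{\alpha,\beta}$, not the first-return map $T_{i,j}$ of the original partition. The definition of $A_{\mathbf{k}}$ only involves $A_{i',i(\beta)}=1$ for some $i'\in\Un(\mathbf{U})$, and $i'$ need not equal $i(\alpha)$. Moreover, the trace has to reproduce the lengths $\sum t_{\mathbf{k}}$.

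Second, your hedge ``or a finite product or quotient'' would be fatal if it were needed, since a quotient would not by itself give the continuation or the bound. It is not needed:
\begin{itemize}
\item The number of closed words of length $m$ producing a given $\gamma\in\mathfrak{P}_{\mathbf{k},m}$ is the minimal period $m^\#$ of the word.
\item Hence $\sum_m \frac1m\tr(\mathcal{L}_z^m)$ carries exactly the weight $m^\#/m=T^\#_\gamma/T_\gamma$, where $T^\#_\gamma$ is the primitive length \emph{in} $\widetilde{\Sigma}_{\mathbf{k}}$. This is what Remark~\ref{remark:number_antecedents} actually computes, although it writes $T^\#_{\rho_{\mathbf{k}}(\gamma)}$; the two differ exactly when $\rho_{\mathbf{k}}$ breaks primitivity.
\item So $\det(I-\mathcal{L}_z)$ is precisely the determinant with suspension-primitive weights, with no combination needed.
\item This is also the normalization that Bowen's identity \eqref{eq:bowen_counting_formula} recombines into $d=f/g$. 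In the notation of the proof of \eqref{eq:correction_counting}, the total weight of $\gamma_0^p$ in $\sum_{\mathbf{k}}(-1)^{\length(\mathbf{k})+1}\log d_{\mathbf{k}}$ is $\sum_{q\mid p}\frac{q}{p}\sum_{\mathbf{k}}(-1)^{\length(\mathbf{k})+1}N^{\mathbf{k}}_{q\tau_0}=\frac1p$, as required.
\end{itemize}

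You leave Step~3 unexecuted, and the paper shows it is less delicate than you expect. The paper uses the composition operator $u\mapsto\chi_{\alpha,\beta}e^{-z\tau_{\alpha,\beta}}N_{\alpha,\beta}\,u\circ F_{\alpha,\beta}$, which is dual to your pushforward, so the anisotropy is reversed. Instead of an FBI transform it uses the discrete frame $\theta_a(x)e^{2i\pi\ell\cdot x}$ with weights $e^{-\epsilon G(a,\ell)}$ and centres $a\in P_\alpha\subseteq K$. Thus $G$ is only evaluated on $K$, where the merely H\"older splitting is defined; your version would need the cone-type escape function you hint at in Step~1. Matrix entries are then bounded directly: Gevrey non-stationary phase handles entries away from the graph of $\mathcal{F}_{\alpha,\beta}$ (Lemma~\ref{lemma:non_stationary_phase}), and the decrease \eqref{eq:approximate_decay} of $G$ handles entries near it (Lemma~\ref{lemma:matrix_entry}). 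Since $z$ enters only through the amplitude $e^{-z\tau_{\alpha,\beta}}$, whose $G^s$ seminorms are $O(e^{C|z|})$, both bounds carry only a factor $e^{C|z|}$. No separate uniform-in-$z$ conjugation estimate is needed.

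For the final count, the paper uses Grothendieck's expansion and Hadamard's inequality. Your route $|\det(I-\mathcal{L}_z)|\le\prod_k(1+\sigma_k)$ is legitimate on a Hilbert space and shorter, with $\sigma_k\le\sum_{m\ge k}|\lambda_m|$ read off the rank-one expansion \eqref{eq:sum_rank_one}.

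Finally, your ``standard'' Lefschetz step is done in the paper by Poisson summation over the whole support of the amplitude. That requires $I-DF_{\alpha_0,\dots,\alpha_m}$ to be invertible on the whole support, not only at the fixed point, which is the role of Lemma~\ref{lemma:useful_later}.
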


Before proving this lemma, let us explain why it implies Theorems \ref{theorem:bound_resonances} and \ref{theorem:bound_determinant}.

\begin{proof}[Proof of Theorems \ref{theorem:bound_resonances} and \ref{theorem:bound_determinant}]
Let us introduce the entire functions
\begin{equation*}
f(z) = \prod_{\substack{\mathbf{k} \in \mathcal{N}\\ \length(\mathbf{k}) \textup{ odd}}} d_{\mathbf{k}}(z) \textup{ and } g(z) = \prod_{\substack{\mathbf{k} \in \mathcal{N} \\ \length(\mathbf{k}) \textup{ even}}} d_{\mathbf{k}}(z).
\end{equation*}
Notice that $f$ and $g$ satisfy the same bound as the individual $d_{\mathbf{k}}$'s (given in Lemma \ref{lemma:individual_estimates}). Moreover, it follows from \eqref{eq:correction_counting} that $d(z) = f(z)/g(z)$. Since $d(z)$ is an entire function, the result follows from Lemma \ref{lemma:bound_entire_function}.
\end{proof}

\section{Dynamical determinants for systems of open hyperbolic maps}\label{section:dynamical_determinant}

The goal of this section is to prove Lemma \ref{lemma:individual_estimates}. Let us consequently fix $\mathbf{k} \in \mathcal{N}$. For every $\alpha \in I_{\mathbf{k}}$, we define
\begin{equation*}
R_{\alpha} = \pi_A \circ p_{\mathbf{k}}(\set{(\alpha_m)_{m \in \mathbb{Z}} \in \Sigma_{\mathbf{k}} : \alpha_0 = \alpha}).
\end{equation*}
We also let $i(\alpha)$ be the element of $I$ such that there is $\mathbf{U} \in Q_{\mathbf{k}}$ such that $\alpha = (\mathbf{U}, i(\alpha))$. Notice that $R_{\alpha} \subseteq R_{i(\alpha)} \subseteq D_{i(\alpha)}$. Moreover, if $\beta \in I_{\mathbf{k}}$ is such that $A_{\mathbf{k}}(\alpha,\beta) = 1$, we also define
\begin{equation*}
R_{\alpha,\beta} = \pi_A \circ p_{\mathbf{k}}(\set{(\alpha_m)_{m \in \mathbb{Z}} \in \Sigma_{\mathbf{k}} : \alpha_0 = \alpha, \alpha_1 = \beta}).
\end{equation*}
Recall the function $t_{\mathbf{k}}$ introduced in \S \ref{subsection:exact_counting}. It follows from the implicit function theorem that there is a $G^s$ function $\tau_{\alpha,\beta} :\mathcal{U}_{\alpha,\beta} \to \mathbb{R}$ defined on an open neighbourhood $\mathcal{U}_{\alpha,\beta}$ of $R_{\alpha,\beta}$ in $D_{i(\alpha)}$ such that
\begin{equation*}
\tau_{\alpha,\beta} \circ \pi_A \circ p_{\mathbf{k}}((\alpha_m)_{m \in \mathbb{Z}}) = t_{\mathbf{k}}((\alpha_m)_{m \in \mathbb{Z}}) 
\end{equation*}
for every $(\alpha_m)_{m \in \mathbb{Z}} \in \Sigma_{\mathbf{k}}$ such that $\alpha_0 = \alpha$ and $\alpha_1 = \beta$, and
\begin{equation*}
\phi_{\tau_{\alpha,\beta}(x)}(x) \in D_{i(\beta)}
\end{equation*}
for every $x \in \mathcal{U}_{\alpha,\beta}$. By takin $\mathcal{U}_{\alpha,\beta}$ small enough, we may assume that $\tau_{\alpha,\beta}$ is valued in $(0,2\kappa)$. We let then $F_{\alpha,\beta} : \mathcal{U}_{\alpha,\beta} \to D_{i(\beta)}$ be defined by $F_{\alpha,\beta}(x) = \phi_{\tau_{\alpha,\beta}(x)}(x)$. Notice that the map $F_{\alpha,\beta}$ is $G^s$. This is precisely for this reason that we need to work with $G^s$ Markov partitions.

If $x \in D_i \cap K$ for some $i \in I$, let $E_{\un,i}(x) = T_x D_i \cap (E_\un(x) \oplus X(x))$ and $E_{\st,i}(x) = T_x D_i \cap (E_\st(x) \oplus X(x))$. Notice that if $A_{\mathbf{k}}(\alpha,\beta) = 1$ and $x \in R_{\alpha,\beta}$ then $D F_{\alpha,\beta}(x) E_{\un,i(\alpha)}(x) = E_{\un,i(\beta)}(F_{\alpha,\beta}(x))$ and $D F_{\alpha,\beta}(x) E_{\st,i(\alpha)}(x) = E_{\st,i(\beta)}(F_{\alpha,\beta}(x))$. We have $T_x D_{i(\alpha)} = E_{\un,i(\alpha)}(x) \oplus E_{\st,i(\alpha)}(x)$, but we will rather work with the dual decomposition $T_x^* D_i = E_{\un,i}^*(x) \oplus E_{\st,i}^*(x)$ where $E_{\un,i}^*(x)$ and $E_{\st,i}^*(x)$ are the annihilators of $E_{\un,i}(x)$ and $E_{\st,i}(x)$ respectively. We will use a norm on $T_x D_i$ defined in the following way: we choose a Mather metric\footnote{A Mather metric is a \emph{H\"older-continuous} Riemannian metric on $M$ that makes for every $x \in K$ the splitting \eqref{eq:hyperbolic_splitting} orthogonal and for which one may take $C = 1$ in the last point of Definition \ref{definition:hyperbolic}. Such a metric is defined on the stable and unstable directions by averaging any metric under the action of the flow for some large time, respectively in forward and backard time. See \cite{mather_norm}.} on $M$, it induces a metric on $E_\un(x) \oplus E_\st(x)$ that we identify then with $T_x D_i$ by projecting along the flow direction. The advantage of this definition is that if $x \in R_{\alpha,\beta}$ then, for the corresponding operator norms, $\n{D F_{\alpha,\beta}(x)_{| E_{\un,i(\alpha)}(x)}^{-1}}$ and $\n{DF_{\alpha,\beta}(x)_{|E_{\st,i(\alpha)}(x)}}$ are strictly less than $1$.

Writing $\mathcal{D} = \bigcup_{i \in I} D_i$, we can then introduce an escape function $G$ on $T^*_{K} \mathcal{D} = \bigcup_{x \in \mathcal{D} \cap K } T_x^* \mathcal{D}$, defined for $(x, \xi) \in T^* \mathcal{D}$ by
\begin{equation*}
G(x,\xi) = |\xi_\st|^{\frac{1}{s}} - |\xi_\un|^{\frac{1}{s}}.
\end{equation*}
Here, $\xi = \xi_{\un} + \xi_{\st}$ is the decomposition of $\xi$ according to the sum $T_x^* D_i = E_{\un,i}^*(x) \oplus E_{\st,i}^*(x)$ where $x \in D_i$, and we used the norms on $T_x^* D_i$ induced by the norm we just defined on $T_x D_i$. Beware that $G$ is a priori not smooth (it is continuous however) since $E_{\un,i}^*(x)$ and $E_{\st,i}^*(x)$ do not depend smoothly on $x$ in general. Notice that there is $C > 0$ such that for every $x \in R_{\alpha,\beta}$ and $\xi \in T^*_{x} D_{i(\alpha)}$, we have\footnote{We write $A^{-\top}$ for the transpose of the inverse of $A$.}
\begin{equation}\label{eq:decay_escape_function}
G(F_{\alpha,\beta}(x), DF_{\alpha,\beta}(x)^{-\top} \xi) - G(x, \xi)  \leq - C^{-1}|\xi|^{\frac{1}{s}}.
\end{equation}
This formula suggests to introduce the symplectic lift $\mathcal{F}_{\alpha,\beta}$ of $F_{\alpha,\beta}$, defined for $(x,\xi) \in T^* \mathcal{U}_{\alpha,\beta}$ by
\begin{equation*}
\mathcal{F}_{\alpha,\beta} (x,\xi) = (F_{\alpha,\beta}(x),(D F_{\alpha,\beta}(x))^{-\top}\xi).
\end{equation*}

Let us identify the $D_i$'s with disjoint balls in $\mathbb{R}^{n-1}$ and introduce the Kohn--Nirenberg distance on $T^* \mathbb{R}^{n-1}$. We introduce the metric $g_{KN}$ on $T^* \mathbb{R}^{n-1} \simeq \mathbb{R}^{n-1}_x \times \mathbb{R}^{n-1}_\xi$ by
\begin{equation*}
g_{KN} = \mathrm{d}x^2 + \frac{\mathrm{d}\xi^2}{1 + |\xi|^2}
\end{equation*}
and let $d_{KN}$ be the distance associated to this metric. Notice that two points $(x,\xi)$ and $(y,\eta)$ of $T^* \mathbb{R}^{n-1}$ are close for $d_{KN}$ is the Euclidean distance between $x$ and $y$ is small, $\brac{\xi}$ and $\brac{\eta}$ have the same order of magnitude\footnote{We use here the Japanese bracket notation $\brac{\xi} = \sqrt{1 + |\xi|^2}$.}, and the Euclidean distance between $\xi$ and $\eta$ is small in front of this order of magnitude.

It follows from \eqref{eq:decay_escape_function} and the continuity of the stable and unstable direction of $\phi$ that there are $\varpi > 0$ and $C > 0$ such that for every $x \in R_\alpha , y \in \mathcal{U}_{\alpha,\beta}, z \in R_\beta$ and $\xi,\eta, \mu \in \mathbb{R}^{n-1}$, if $d_{KN}((x,\xi),(y,\eta)) \leq \varpi$, $d_{KN}(\mathcal{F}_{\alpha,\beta}(y,\eta), (z,\mu)) \leq \varpi$ and $|\mu| \geq 1$ then
\begin{equation}\label{eq:approximate_decay}
G(z,\mu) - G(x,\xi) \leq - C^{-1} |\mu|^{\frac{1}{s}}.
\end{equation}

Let us now state a lemma that we need to fix some parameter in prevision for the proof of Lemma \ref{lemma:value_trace}.

\begin{lemma}\label{lemma:useful_later}
There is $\delta > 0$ such that the following holds. Let $m \geq 1$ be an integer. Let $\alpha_0,\dots,\alpha_{m-1} \in I_{\mathbf{k}}$ be such that $A_{\mathbf{k}}(\alpha_0,\alpha_1) = \dots = A_{\mathbf{k}}(\alpha_{m-2},\alpha_{m-1}) = A_{\mathbf{k}}(\alpha_{m-1},\alpha_0) = 1$. For $j = 0,\dots,m-1$, let $a_j \in R_{\alpha_j}$ be such that $\overline{B(a_j,\delta)} \cap R_{\alpha_j,\alpha_{j+1}} \neq \emptyset$ and let $x_j \in \overline{B(a_j,\delta)}$. For $j = 0,\dots,m-2$, assume that we have $ F_{\alpha_j,\alpha_{j+1}}(x_j) = x_{j+1}$. Assume also that $F_{\alpha_{m-1},\alpha_0}(x_{m-1}) \in \overline{B(a_0,\delta)}$. Then $1$ is not an eigenvalue of $DF_{\alpha_{m-1},\alpha_0}(x_{m-1}) \dots D F_{\alpha_0,\alpha_1}(x_0)$.
\end{lemma}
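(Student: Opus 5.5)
The proof will be a hyperbolicity argument based on invariant cone fields, combined with continuity of the splitting $E_{\un,i}\oplus E_{\st,i}$ on the compact set $K\cap\mathcal{D}$.

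\textbf{Cones at points of $R_{\alpha,\beta}$.} For $x\in R_{\alpha,\beta}$, the map $DF_{\alpha,\beta}(x)$ sends $E_{\un,i(\alpha)}(x)$ onto $E_{\un,i(\beta)}(F_{\alpha,\beta}(x))$, and likewise for the stable spaces. In the Mather-type norm it expands the unstable part and contracts the stable part by a factor $\theta<1$, uniform over the finitely many pairs $(\alpha,\beta)$ and over $x$ in the compact sets $R_{\alpha,\beta}$. For $x\in R_{\alpha}$ and small $c>0$, define the unstable cone
\begin{equation*}
\mathcal{C}^{\un}(x)=\set{v=v_\un+v_\st : |v_\st|\le c|v_\un|}.
\end{equation*}
This uses the splitting at $x$ and a smooth norm comparable to the Mather norm. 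The stable cone $\mathcal{C}^{\st}(x)$ is defined symmetrically. Then $DF_{\alpha,\beta}(x)$ maps $\mathcal{C}^{\un}(x)$ strictly inside $\mathcal{C}^{\un}(F_{\alpha,\beta}(x))$ and expands its vectors by a factor $\theta'^{-1}>1$. Its inverse does the same for the stable cones.

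\textbf{Robustness.} The maps $F_{\alpha,\beta}$ are $C^1$ on $\mathcal{U}_{\alpha,\beta}$, the splitting is continuous on the compact set, and there are finitely many indices. Hence there is $\delta>0$ with the following property. Let $a\in R_\alpha$ with $\overline{B(a,\delta)}\cap R_{\alpha,\beta}\ne\emptyset$, and let $x\in\overline{B(a,\delta)}$. I take $\delta$ small enough that $\overline{B(a,\delta)}\subseteq\mathcal{U}_{\alpha,\beta}$. Let $b\in R_\beta$ with $F_{\alpha,\beta}(x)\in\overline{B(b,\delta)}$. Then $DF_{\alpha,\beta}(x)$ maps $\mathcal{C}^{\un}(a)$ into $\mathcal{C}^{\un}(b)$, expanding by $\lambda>1$, and $DF_{\alpha,\beta}(x)^{-1}$ maps $\mathcal{C}^{\st}(b)$ into $\mathcal{C}^{\st}(a)$, expanding by $\lambda$. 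Here the cones are transported to nearby points via the chart identification of $D_i$ with a ball in $\mathbb{R}^{n-1}$.

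Applied along the chain, this requires $b=a_{j+1}$ and $F(x_j)=x_{j+1}\in\overline{B(a_{j+1},\delta)}$. That holds by hypothesis, including for the last step back to $a_0$. Composing, the product $L=DF_{\alpha_{m-1},\alpha_0}(x_{m-1})\cdots DF_{\alpha_0,\alpha_1}(x_0)$ maps $\mathcal{C}^{\un}(a_0)$ into itself with $|Lv|\ge\lambda^m|v|$. Also $L^{-1}$ maps $\mathcal{C}^{\st}(a_0)$ into itself with expansion $\lambda^m$.

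\textbf{Conclusion.} Suppose $Lv=v$ with $v\ne0$, and write $v=v_\un+v_\st$ at $a_0$. If $|v_\st|\le c|v_\un|$, then $v$ lies in the unstable cone, so $|v|=|Lv|\ge\lambda^m|v|$, which is a contradiction. Otherwise $|v_\st|> c|v_\un|$, so $v$ lies in the stable cone (for $c\le1$, taking the stable cone with aperture $c^{-1}$, so the two cones cover the space). Then $|v|=|L^{-1}v|\ge\lambda^m|v|$, again a contradiction. Hence $1$ is not an eigenvalue of $L$.

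\textbf{Main obstacle.} The delicate step is choosing the cone aperture $c$ and $\delta$ uniformly, so that cone invariance survives perturbation off $R_{\alpha,\beta}$ and the stable and unstable cones cover the whole tangent space. The approach is to use the norm $\max(|v_\un|,|v_\st|)$ adapted at $a$, with $c=1$. Then strict invariance at $R_{\alpha,\beta}$ with a margin $\theta<1$ is preserved under small perturbation of the linear map and of the base point. This follows from uniform continuity of $DF$ and of the splitting on compacts. The uniformity requires the factor $\theta$ to be uniform over all $x\in R_{\alpha,\beta}$, which holds because the Mather metric is continuous and the sets $R_{\alpha,\beta}$ are compact.
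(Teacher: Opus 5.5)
Your proposal is correct and follows essentially the paper's argument. The paper also freezes the splitting at the reference points $a_j$ and controls the perturbation by continuity (explicitly, Hölder errors $C\delta^{\varrho}$). It shows that the conditions $|v_{\un,j}|>\tau|v_j|$ and $|v_{\st,j}|>\tau|v_j|$ propagate with growth around the cycle, and concludes because $\tau<1/2$ forces one of them to hold. Your invariant-cone formulation, with the max-norm and aperture $c=1$ and explicit use of $L^{-1}$ for the stable cone, is a clean repackaging of that argument. One correction: measure expansion in the Mather norm at $a$ itself, as you do in your last paragraph, not in a merely comparable smooth norm, since one-step expansion can fail in the latter.
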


\begin{remark}
In the statement of Lemma \ref{lemma:useful_later}, we use $B(x,\delta)$ to denote the Euclidean ball of center $x$ and radius $\delta$ in $\mathbb{R}^{n-1}$. It is crucial in this lemma that we identified the $D_i$'s with balls in $\mathbb{R}^{n-1}$. This is because the derivative $DF_{\alpha_{m-1},\alpha_0}(x_{m-1}) \dots D F_{\alpha_0,\alpha_1}(x_0)$ is not intrinsically an endomorphism but is identified with a $(n-1) \times (n-1)$ matrix using the standard identification of the tangent space of $\mathbb{R}^{n-1}$ (at any point) with $\mathbb{R}^{n-1}$.
\end{remark}

\begin{proof}[Proof of Lemma \ref{lemma:useful_later}]
Let $v_0 \in \mathbb{R}^{n-1}$ be such that $$DF_{\alpha_{m-1},\alpha_0}(x_{m-1}) \dots D F_{\alpha_0,\alpha_1}(x_0) v_0 = v_0.$$ For $j = 1,\dots, m-1$, define $v_j = DF_{\alpha_{j-1},\alpha_j}(x_{j-1}) \dots D F_{\alpha_0,\alpha_1}(x_0) v_0$. For $j = 0,\dots,m-1$, notice that since we identified $D_j$ with a disk in $\mathbb{R}^{n-1}$, then $E_{\st,j}(a_j)$ and $E_{\un,j}(a_j)$ identify with subspaces of $\mathbb{R}^{n-1}$. Let us consequently split $v_j = v_{\st,j} + v_{\un,j}$ according to the decomposition $\mathbb{R}^{n-1} = E_{\st,j}(a_j) \oplus E_{\un,j}(a_j)$.

We have then for $j = 0,\dots, m-1$ (using a periodic indexing in the case $j = m-1$) that
\begin{equation*}
\begin{split}
v_{j+1} & = DF_{\alpha_{j}, \alpha_{j+1}}(x_j)v_j \\ & = (DF_{\alpha_j,\alpha_{j+1}}(x_j) - DF_{\alpha_j,\alpha_{j+1}}(a_j))v_j + DF_{\alpha_j,\alpha_{j+1}}(a_j) v_{\st,j} \\ & \qquad \qquad \qquad \qquad \qquad \qquad \qquad \qquad \qquad \qquad + DF_{\alpha_j,\alpha_{j+1}}(a_{j})v_{\un,j}.
\end{split}
\end{equation*}
In the following $C > 0$ denotes a constant that does not depend on $\delta$, nor on the other data from the statement of the lemma, and may change from one line to another. Using that $DF_{\alpha_j,\alpha_{j+1}}$ is smooth, we get
\begin{equation*}
|(DF_{\alpha_j,\alpha_{j+1}}(x_j) - DF_{\alpha_j,\alpha_{j+1}}(a_j))v_j| \leq C \delta |v_j|.
\end{equation*}
Notice that $DF_{\alpha_j,\alpha_{j+1}}(a_j) v_{\st,j}$ and $DF_{\alpha_j,\alpha_{j+1}}(a_{j})v_{\un,j}$ belong respectively to $E_{\st,j+1}(F_{\alpha_j,\alpha_{j+1}}(a_j))$ and $E_{\un,j+1}(F_{\alpha_j,\alpha_{j+1}}(a_j))$. Since $a_{j+1}$ is at distance at most $C\delta$ from $F_{\alpha_j,\alpha_{j+1}}(a_j)$, the stable and unstable directions are H\"older-continuous and $v_{\un,j+1}$ is the projection of $v_{j+1}$ on $E_{\un,j+1}(a_{j+1})$ along the direction $E_{\st,j+1}(a_j)$, we get, for some $\varrho > 0$ and $\lambda > 1$, that
\begin{equation}\label{eq:almost_unstable_property}
\begin{split}
|v_{\un,j+1}| & \geq (1 - C \delta^{\varrho})|DF_{\alpha_j,\alpha_{j+1}}(a_{j})v_{\un,j}| - C \delta^\varrho |DF_{\alpha_j,\alpha_{j+1}}(a_j) v_{\st,j}| \\ & \qquad \qquad \qquad \qquad \qquad \qquad - |(DF_{\alpha_j,\alpha_{j+1}}(x_j) - DF_{\alpha_j,\alpha_{j+1}}(a_j))v_j| \\
          & \geq \lambda( 1 - C \delta^{\varrho}) |v_{\un,j}| - C \delta^\varrho |v_j|.
\end{split}
\end{equation}
Here, we are using the Mather metric at $a_j$ to measure all quantities with a $j$ index. This is how we get $\lambda > 1$, we must however take into account the dependence of the metric on the point (since we do not have $F_{\alpha_j,\alpha_{j+1}}(a_j) = a_{j+1}$), this is included in the correction factor $1 - C \delta^\varrho$: the Mather metric is H\"older-continous. A similar reasoning yields
\begin{equation}\label{eq:almost_stable_property}
|v_{\st,j+1}| \leq \lambda^{-1}(1 + C \delta^\varrho) |v_{\st,j}| + C \delta^\varrho|v_j|.
\end{equation}

Let us then choose $\tau \in(0,\min(1 - \lambda^{-2},1/2))$ and assume that $\delta$ is small enough to ensure that
\begin{equation*}
1 + \frac{\lambda^{-1}(1 + C \delta^{\varrho}) + C \delta^{\varrho}}{\tau \lambda(1 - C \delta^{\varrho}) - C \delta \varrho} < \tau^{-1} \textup{ and } \lambda(1 - C \delta^\varrho) - C \tau^{-1} \delta^\varrho > 1,
\end{equation*}
with the constant $C$ from \eqref{eq:almost_unstable_property} and \eqref{eq:almost_stable_property}. Now, notice that if there is $j \in \set{0,\dots,m-1}$ such that $|v_{\un,j}| > \tau |v_j|$ then it follows from \eqref{eq:almost_unstable_property} and \eqref{eq:almost_stable_property} that $|v_{\un,j+1}| > \tau |v_{j+1}|$ and $|v_{\un,j+1}| \geq (\lambda (1-C \delta^\varrho) - C \tau^{-1} \delta^\varrho)|v_{\un,j}|$. Iterating this reasoning $m$ times and recalling that we use periodic indexing, we find that
\begin{equation*}
|v_{\un,j}| \geq (1-C \delta^\varrho - C \tau^{-1} \delta^\varrho)^m|v_{\un,j}| > |v_{\un,j}|,
\end{equation*}
a contradiction. Hence, we have $|v_{\un,j}| \leq \tau |v_j|$ for $j = 0,\dots,m-1$. Similarly, we also get $|v_{\st,j}| \leq \tau|v_j|$. Since $\tau < \frac{1}{2}$, it follows that $v_0 = 0$.
\end{proof}

Let us now fix $\delta > 0$ small enough so that the following holds:
\begin{itemize}
\item $\overline{B(0,\delta)} \subseteq ] - 1, 1[^{n-1}$;
\item for every $\ell \in \mathbb{Z}^{n-1}$ the diameter of 
\begin{equation*}
\set{(y,2 \pi \ell) : y \in B(0,\delta)}
\end{equation*}
for the Kohn--Nirenberg distance is less than $\varpi/10$ (where $\varpi$ is defined above \eqref{eq:approximate_decay});
\item for every $\alpha \in I_{\mathbf{k}}$ and $x \in D_{i(\alpha)}$, if there is $\beta \in I_{\mathbf{k}}$ such that $A_{\mathbf{k}}(\alpha,\beta) = 1$ and $\overline{B(x,\delta)}$ intersects $R_{\alpha,\beta}$, then $\overline{B(x,\delta)}$ is contained in $\mathcal{U}_{\alpha,\beta}$;
\item Lemma \ref{lemma:useful_later} holds.
\end{itemize}
For each $\alpha \in I_{\mathbf{k}}$, let us fix a finite set $P_{\alpha}$ of points in $R_{\alpha}$ such that 
\begin{equation*}
\mathcal{V}_\alpha \coloneqq \bigcup_{x \in P_{\alpha}} B(x,\delta/2)
\end{equation*}
covers $R_{\alpha}$. Choose then two families of $G^s$ functions $(\theta_a)_{a \in P_{\alpha}}, (\tilde{\theta}_a)_{a \in P_{a}}$ such that $\sum_{a \in P_{\alpha}} \theta_a \equiv 1$ on $\mathcal{V}_\alpha$ and for each $a \in P_\alpha$, the functions $\theta_a$ and $\tilde{\theta}_a$ are supported in $B(a,2\delta/3)$ and $\tilde{\theta}_a \equiv 1$ on a neighbourhood of the support of $\theta_a$.

Let then $e_1,\dots,e_d$ denote the canonical basis of $\mathbb{C}^d$ and define for $a \in P_{\alpha}, \ell \in \mathbb{Z}^{n-1}$ and $j \in \set{1,\dots,d}$ the functions
\begin{equation*}
\mathbf{e}_{a,\ell,j} : x \mapsto \theta_a(x) e^{2 i \pi \ell \cdot x} e_j \textup{ and } \tilde{\mathbf{e}}_{a,\ell,j} : x \mapsto \tilde{\theta}_a(x) e^{2 i \pi \ell \cdot x} e_j.
\end{equation*}
Notice then that for every smooth function $f : \mathbb{R}^{n-1} \to \mathbb{C}^d$ supported in $\mathcal{V}_\alpha$, the Poisson formula implies that
\begin{equation}\label{eq:inverse_Fourier}
f = \sum_{a \in P_{\alpha}} \sum_{\ell \in \mathbb{Z}^{n-1}} \sum_{j = 1}^d \langle f, \mathbf{e}_{a,\ell,j} \rangle \tilde{\mathbf{e}}_{a,\ell,j},
\end{equation}
where $\langle \cdot, \cdot \rangle$ denote the standard $L^2$ scalar product on $\mathbb{R}^{n-1}$. For $\epsilon > 0$ and $f \in C^\infty(\mathbb{R}^{n-1})$, define then
\begin{equation*}
\n{f}_{\epsilon,\alpha}^2 = \sum_{a \in P_{\alpha}} \sum_{\ell \in \mathbb{Z}^{n-1}} \sum_{j = 1}^d e^{- 2 \epsilon G(a,\ell)}|\langle f, \mathbf{e}_{a,\ell,j} \rangle|^2 \in \mathbb{R}_+ \cup \set{+ \infty}.
\end{equation*}
Let then $\mathcal{H}_{\epsilon,\alpha}$ be the completion of $$\set{f \in C_c^\infty(\mathbb{R}^{n-1}) : \supp f \subseteq \mathcal{V}_\alpha \textup{ and } \n{f}_{\epsilon,\alpha} < + \infty}$$ for the norm $\n{\cdot}_{\epsilon,\alpha}$. Having defined this space for each $\alpha \in I_{\mathbf{k}}$, we set then 
\begin{equation*}
\mathcal{H}_\epsilon = \bigoplus_{\alpha \in I_{\mathbf{k}}} \mathcal{H}_{\epsilon,\alpha}.
\end{equation*}

For every $i \in I$, the disk $D_i$ is simply connected, and thus the pullback of the vector bundle $E \to M$ by the injection $D_i \hookrightarrow M$ is trivial. Hence, for each $x \in D_i$ there is a linear isomorphism $H_{i,x}$ between $\mathbb{C}^d$ and the fiber $E_{x}$ such that the map $(x,v) \to H_{i,x}(v)$ from $D_i \times \mathbb{C}^d$ to $E$ is $G^s$. If $\alpha,\beta \in I_{\mathbf{k}}$ are such that $A_{\mathbf{k}}(\alpha,\beta) = 1$, we define for every $x \in \mathcal{U}_{\alpha,\beta}$ the matrix
\begin{equation}\label{eq:action_fiber}
N_{\alpha,\beta}(x) =\left( H_{i(\beta),F_{\alpha,\beta}(x)}^{-1} \Phi_{\tau_{\alpha,\beta}(x)} H_{i(\alpha),x}\right)^\top.
\end{equation}
Let then choose a $G^s$ function $\chi_{\alpha,\beta}$ compactly supported in $\mathcal{U}_{\alpha,\beta} \cap \mathcal{V}_\alpha$ and such that $\chi_{\alpha,\beta} \equiv 1$ on a neighbourhood of $R_{\alpha,\beta}$, and introduce for $z \in \mathbb{C}$ the operator $L_{z,\alpha,\beta}$ defined on $C^\infty(D_{i(\beta)},\mathbb{C}^d)$ by
\begin{equation*}
L_{z,\alpha,\beta} u (x) = \chi_{\alpha,\beta}(x) e^{-z \tau_{\alpha,\beta}(x)} N_{\alpha,\beta}(x) u (F_{\alpha,\beta}(x)).
\end{equation*}

The $L_{z,\alpha,\beta}$'s are the basic blocks for the construction of an operator $\mathcal{L}_z$ whose Fredholm determinant is $d(z)$ (see \eqref{eq:definition_entry} and \eqref{eq:abstract_operator}). However, before defining $\mathcal{L}_z$, we need to establish basic properties of the $L_{z,\alpha,\beta}$'s. This is the point of the following two lemmas.

\begin{lemma}\label{lemma:non_stationary_phase}
There is a constant $C > 0$ such that for every $\alpha,\beta \in I_{\mathbf{k}}$ such that $A_{\mathbf{k}}(\alpha,\beta) = 1$, every $a \in P_{\beta}$, every $b \in P_{\alpha}$, every $\ell,\ell' \in \mathbb{Z}^{n-1}$, every $p,p' \in \set{1,\dots,d}$ and every $z \in \mathbb{C}$, we have
\begin{equation}\label{eq:general_upper_bound_coeff}
|\langle  L_{z,\alpha,\beta} \tilde{\mathbf{e}}_{a,\ell,p}, \mathbf{e}_{b,\ell',p'} \rangle| \leq C e^{C|z|}.
\end{equation} 
If in addition the sets $\set{(y,2 \pi \ell) : y \in B(a,\delta)}$ and $\mathcal{F}_{\alpha,\beta}(\set{(y,2 \pi \ell') : y \in B(b,\delta)})$ are at distance larger than $\varpi/10$ then
\begin{equation}\label{eq:better_upper_bound_coeff}
|\langle  L_{z,\alpha,\beta} \tilde{\mathbf{e}}_{a,\ell,p}, \mathbf{e}_{b,\ell',p'} \rangle| \leq C e^{C|z| - C^{-1} \max(|\ell|,|\ell'|)^{\frac{1}{s}}}
\end{equation}
\end{lemma}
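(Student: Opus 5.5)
The coefficient is an explicit oscillatory integral:
\begin{equation*}
\langle L_{z,\alpha,\beta} \tilde{\mathbf{e}}_{a,\ell,p}, \mathbf{e}_{b,\ell',p'} \rangle = \int_{\mathbb{R}^{n-1}} \chi_{\alpha,\beta}(x)\theta_b(x)\tilde{\theta}_a(F_{\alpha,\beta}(x)) e^{-z\tau_{\alpha,\beta}(x)} \big(N_{\alpha,\beta}(x)e_p\big)\cdot \overline{e_{p'}}\, e^{2i\pi(\ell\cdot F_{\alpha,\beta}(x) - \ell'\cdot x)}\,\mathrm{d}x .
\end{equation*}
We write this as $\int A_z(x) e^{i\Psi(x)}\mathrm{d}x$ with phase $\Psi(x) = 2\pi(\ell\cdot F_{\alpha,\beta}(x) - \ell'\cdot x)$. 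The amplitude $A_z$ is $G^s$ and compactly supported in $B(b,2\delta/3)\cap \mathcal{U}_{\alpha,\beta}$. Since $\tau_{\alpha,\beta}$ takes values in $(0,2\kappa)$, we have $|e^{-z\tau_{\alpha,\beta}}|\le e^{2\kappa|z|}$. The bound \eqref{eq:general_upper_bound_coeff} then follows by taking absolute values inside the integral, because the supports are bounded and $N_{\alpha,\beta}$ is continuous on a compact set. Only finitely many pairs $(\alpha,\beta)$ and points $a,b$ occur, so the constants can be taken uniform.

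For \eqref{eq:better_upper_bound_coeff}, we use non-stationary phase in Gevrey regularity. We have $\nabla\Psi(x) = 2\pi(DF_{\alpha,\beta}(x)^\top \ell - \ell')$, and $\nabla\Psi(x)=0$ exactly when $\mathcal{F}_{\alpha,\beta}(x,2\pi\ell') = (F_{\alpha,\beta}(x),2\pi\ell)$. On the support, $x\in B(b,\delta)$ and $F_{\alpha,\beta}(x)\in B(a,\delta)$. The separation hypothesis (distance larger than $\varpi/10$ in the Kohn--Nirenberg metric) and the uniform bounds on $DF_{\alpha,\beta}$ and its inverse then give $|\nabla\Psi(x)| \ge c\,\max(|\ell|,|\ell'|)$ on the support. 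If $|\ell|$ and $|\ell'|$ are of different orders of magnitude, this is immediate. If they are comparable, a large $d_{KN}$ separation means that $|DF^\top\ell-\ell'|\gtrsim \brac{\ell'}$, after absorbing the $x$-variation over the $\delta$-ball. The choice of $\delta$ (the sets $\set{(y,2\pi\ell)}$ have $d_{KN}$-diameter below $\varpi/10$) is used exactly here. When $\max(|\ell|,|\ell'|)$ is bounded, the claim is just \eqref{eq:general_upper_bound_coeff}.

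Next we integrate by parts $k$ times with the operator $L = \frac{\nabla\Psi\cdot\nabla}{i|\nabla\Psi|^2}$, which satisfies $Le^{i\Psi}=e^{i\Psi}$. The transpose applied $k$ times to $A_z$ produces derivatives of $A_z$ and of $\nabla\Psi/|\nabla\Psi|^2$. Since $\Psi/\max(|\ell|,|\ell'|)$ has $G^s$ bounds uniform in $\ell,\ell'$, and $A_z$ has Gevrey bounds $C e^{C|z|} R^k k!^s$ together with the factor $|z|^k$ coming from derivatives of $e^{-z\tau}$, we get
\begin{equation*}
|\langle L_{z,\alpha,\beta} \tilde{\mathbf{e}}_{a,\ell,p}, \mathbf{e}_{b,\ell',p'} \rangle| \le C e^{C|z|}\,\frac{C^k (k!^s + |z|^k)}{\max(|\ell|,|\ell'|)^k}.
\end{equation*}
It is cleaner to bound $|z|^k \le k!\, e^{|z|}$, so that only $C e^{C'|z|}C^k k!^s/\lambda^k$ remains, with $\lambda=\max(|\ell|,|\ell'|)$. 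Optimizing in $k$ by taking $k\approx (\lambda/C)^{1/s}$ gives $e^{-c\lambda^{1/s}}$, which is the claimed bound.

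The main obstacle is the Gevrey bookkeeping in the repeated integration by parts. One has to control the $k$-th power of $L^\top$ with $k!^s$ growth, not worse. The standard route is a Faà di Bruno/Cauchy-type estimate for quotients and compositions of $G^s$ functions, or, alternatively, an almost-analytic-type argument. I would first try the classical lemma that the coefficients of $(L^\top)^k$ satisfy $G^s$ estimates of the form $C^{k+|\gamma|}(k+|\gamma|)!^s\lambda^{-k}$, proved by induction on $k$.
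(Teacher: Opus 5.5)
Your proposal is correct and follows the paper's proof: the same oscillatory integral, the sup bound for the first estimate, the lower bound $|\nabla\Psi|\gtrsim\max(|\ell|,|\ell'|)$ from the Kohn--Nirenberg separation, and then Gevrey non-stationary phase. (For the lower bound, note that $(F_{\alpha,\beta}(x),2\pi\ell)$ and $\mathcal{F}_{\alpha,\beta}(x,2\pi\ell')$ lie directly in the two separated sets and share the same base point, so no absorption over the $\delta$-ball and no case distinction are needed.) The only difference is that the paper cites \cite[Proposition 1.37]{asterisque} for the Gevrey integration by parts and \cite[Lemma 1.12]{asterisque} for the $\mathcal{O}(e^{C|z|})$ Gevrey bound on the amplitude, whereas you sketch both by hand; your inductive lemma for $({}^tL)^k$ is fine, although the induction closes most easily with different geometric constants attached to $k$ and to $|\gamma|$.
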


\begin{proof}
Let us write
\begin{equation}\label{eq:explicit_scalar_product}
\langle  L_{z,\alpha,\beta} \tilde{\mathbf{e}}_{a,\ell,p}, \mathbf{e}_{b,\ell',p'} \rangle = \int_{\mathbb{R}^{n-1}} h_z(x) e^{2 i \pi \Phi_{\alpha,\beta}^{\ell,\ell'}(x)} \mathrm{d}x,
\end{equation}
where 
\begin{equation*}
\Phi_{\alpha,\beta}^{\ell,\ell'}(x) = \ell \cdot F_{\alpha,\beta}(x) - \ell' \cdot x
\end{equation*}
and $h_z(x) = e^{-z \tau_{\alpha,\beta}(x)} g(x)$ with
\begin{equation*}
g(x) = \chi_{\alpha,\beta}(x) \theta_b(x) \tilde{\theta}_a(F_{\alpha,\beta}(x)) \langle N_{\alpha,\beta}(x) e_p, e_{p'} \rangle.
\end{equation*}
Notice that $g$ is a $G^s$, the $G^s$ estimate being uniform in all the parameters of the problem.

The estimate \eqref{eq:general_upper_bound_coeff} follows directly by bounding $h_z$ by its supremum norm (and using that the size of its support is bounded). To prove the estimate \eqref{eq:better_upper_bound_coeff}, we assume that the Kohn--Nirenberg distance between $\set{(y,2 \pi \ell) : y \in B(a,\delta)}$ and $\mathcal{F}_{\alpha,\beta}(\set{(y,2 \pi \ell') : y \in B(b,\delta)})$ is greater than $\varpi/10$ and we compute for $x$ in the support of $h_z$:
\begin{equation}\label{eq:non_stationary}
\begin{split}
|\nabla \Phi_{\alpha,\beta}^{\ell,\ell'}(x)| & = |D F_{\alpha,\beta}^\top (x) \ell - \ell'| \\ 
   & \geq C^{-1} |\ell - D F_{\alpha,\beta}(x)^{-\top} \ell'| \\
   & \geq C^{-1} \max(|\ell|,|\ell'|).
\end{split}
\end{equation}
Here, we used the fact that the distance between the points $(F_{\alpha,\beta}(x), \ell)$ and $(F_{\alpha,\beta}(x), DF_{\alpha,\beta}(x)^{-\top} \ell')$ for the Kohn--Nirenberg metric is bounded from below (by $\varpi/10$), whenever $x$ is in the support of $h_z$ (because it imposes that $F_{\alpha,\beta}(x) \in B(a,\delta)$). The estimate \eqref{eq:non_stationary} expresses the fact that the phase in \eqref{eq:explicit_scalar_product} is non-stationary. We will consequently end the proof by a Gevrey non-stationary phase argument.

Indeed, we can rewrite \eqref{eq:explicit_scalar_product} as (ignoring the case in which $\max(|\ell|,|\ell'|) = 0$)
\begin{equation*}
\int_{\mathbb{R}^{n-1}} h_z(x) e^{\max(|\ell|,|\ell'|) \frac{2 i \pi \Phi_{\alpha,\beta}^{\ell,\ell'}(x)}{\max(|\ell|,|\ell'|)}} \mathrm{d}x
\end{equation*}
and then apply \cite[Proposition 1.37]{asterisque} with large parameter $\max(|\ell|,|\ell'|)$. Notice that the condition that the imaginary part of the phase is positive on the boundary of the domain of integration from \cite[Proposition 1.37]{asterisque} is not satisfied here, but since the function $h_z$ is compactly supported, we could just make it happen artificially, so that the result still applies. A bound of the form $\mathcal{O}(e^{C|z|})$ on a Gevrey norm of $h_z$ may be obtained for instance by \cite[Lemma 1.12]{asterisque}.
\end{proof}

We continue our study of the operators $L_{z,\alpha,\beta}$'s. Notice that the following lemma also implies that, if $\epsilon > 0$ is small enough, then the space $\mathcal{H}_\epsilon$ is non-empty.

\begin{lemma}\label{lemma:matrix_entry}
Let $\epsilon > 0$ be small enough. There is a constant $C > 0$ such that for every $\alpha,\beta \in I_{\mathbf{k}}$ such that $A_{\mathbf{k}}(\alpha,\beta) = 1$, every $a \in P_{\beta}$, every $\ell \in \mathbb{Z}^{n-1}$, every $p \in \set{1,\dots,d}$ and every $z \in \mathbb{C}$, we have
\begin{equation}\label{eq:size_matrix_entry}
\n{L_{z,\alpha,\beta} \tilde{\mathbf{e}}_{a,\ell,p}}_{\epsilon,\alpha} \leq C e^{ - \epsilon G(a,\ell) - C^{-1}|\ell|^{\frac{1}{s}} + C |z|}.
\end{equation}
\end{lemma}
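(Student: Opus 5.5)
We expand the norm by definition:
\begin{equation*}
\n{L_{z,\alpha,\beta} \tilde{\mathbf{e}}_{a,\ell,p}}_{\epsilon,\alpha}^2 = \sum_{b \in P_\alpha} \sum_{\ell' \in \mathbb{Z}^{n-1}} \sum_{p'=1}^d e^{-2\epsilon G(b,\ell')} |\langle L_{z,\alpha,\beta} \tilde{\mathbf{e}}_{a,\ell,p}, \mathbf{e}_{b,\ell',p'}\rangle|^2 .
\end{equation*}
Since $P_\alpha$ and $\set{1,\dots,d}$ are finite, it suffices to bound the sum over $\ell'$ for fixed $b,p'$ by the square of the right-hand side of \eqref{eq:size_matrix_entry}. (Note that $L_{z,\alpha,\beta}\tilde{\mathbf{e}}_{a,\ell,p}$ is smooth and supported in $\supp \chi_{\alpha,\beta} \subseteq \mathcal{V}_\alpha$, so it belongs to the pre-completed space once the norm is finite.) We split the $\ell'$'s into two classes, according to whether the alternative of Lemma \ref{lemma:non_stationary_phase} applies: the \emph{far} indices, for which $\set{(y,2\pi\ell): y \in B(a,\delta)}$ and $\mathcal{F}_{\alpha,\beta}(\set{(y,2\pi\ell'): y \in B(b,\delta)})$ are at Kohn--Nirenberg distance larger than $\varpi/10$, and the \emph{near} indices.

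For the far indices we use \eqref{eq:better_upper_bound_coeff}. Since $|G(b,\ell')| \leq C|\ell'|^{1/s}$ and $|G(a,\ell)| \leq C|\ell|^{1/s}$, taking $\epsilon$ small compared to the constant $C^{-1}$ in \eqref{eq:better_upper_bound_coeff} gives
\begin{equation*}
e^{-\epsilon G(b,\ell')} |\langle \cdot,\cdot\rangle| \leq C e^{C|z| - (2C)^{-1}\max(|\ell|,|\ell'|)^{1/s}}.
\end{equation*}
Since $\max(|\ell|,|\ell'|)^{1/s} \geq \frac12(|\ell|^{1/s} + |\ell'|^{1/s})$, the factor $e^{-c|\ell'|^{1/s}}$ makes the sum over $\ell'$ converge. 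The remaining factor $e^{-c|\ell|^{1/s}}$ absorbs $e^{\epsilon G(a,\ell)}$ again when $\epsilon$ is small. This gives a contribution bounded by $Ce^{-\epsilon G(a,\ell) - C^{-1}|\ell|^{1/s} + C|z|}$.

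For the near indices we use \eqref{eq:general_upper_bound_coeff}, so everything rests on the escape function. Near-ness means that there are $y \in B(b,\delta)$ and $w \in B(a,\delta)$ with $d_{KN}(\mathcal{F}_{\alpha,\beta}(y,2\pi\ell'),(w,2\pi\ell)) \leq \varpi/10$. By the choice of $\delta$, the point $(b,2\pi\ell')$ is then $\varpi/10$-close to $(y,2\pi\ell')$, and $(a,2\pi\ell)$ is close to $(w,2\pi\ell)$. Here $b \in R_\alpha$, $a \in R_\beta$, and $y \in \mathcal{U}_{\alpha,\beta}$ because $\overline{B(b,\delta)}$ meets $\supp\chi_{\alpha,\beta}$ near $R_{\alpha,\beta}$; this uses the third property in the choice of $\delta$, and if $B(b,\delta)$ misses $\supp \chi_{\alpha,\beta}$ the coefficient simply vanishes. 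So \eqref{eq:approximate_decay} applies with $(x,\xi)=(b,2\pi\ell')$ and $(z,\mu)=(a,2\pi\ell)$ when $\ell \neq 0$. One small point is that $G$ is evaluated at $\ell$ rather than $2\pi\ell$; this is harmless up to a factor $(2\pi)^{1/s}$ by homogeneity. The result is
\begin{equation*}
-\epsilon G(b,\ell') \leq -\epsilon G(a,\ell) - \epsilon C^{-1}|\ell|^{1/s}.
\end{equation*}
Moreover, Kohn--Nirenberg closeness together with boundedness of $DF_{\alpha,\beta}$ forces $\brac{\ell'} \asymp \brac{\ell}$ and $|\ell' - DF^{\top}\ell| \leq C\varpi\brac{\ell}$. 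Hence the number of near $\ell'$ is $\mathcal{O}(\brac{\ell}^{n-1})$, and this polynomial factor is absorbed by halving the exponential gain $e^{-\epsilon C^{-1}|\ell|^{1/s}}$. The case $\ell = 0$ only involves boundedly many $\ell'$ and is trivial.

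The main obstacle is this near-diagonal step: checking carefully that the hypotheses of \eqref{eq:approximate_decay} hold for the grid points $a,b$. That is where the radius choices $\delta$, $\varpi/10$ and the support conditions on $\chi_{\alpha,\beta}$, $\theta_b$, $\tilde\theta_a$ must all fit together. Note also that the decay constant in the near region is $\epsilon C^{-1}$, so the constant $C$ in \eqref{eq:size_matrix_entry} is allowed to depend on $\epsilon$.
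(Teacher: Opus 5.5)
Your proposal is correct and follows the paper's proof essentially step by step. It uses the same near/far split of the indices $(b,\ell',p')$ at Kohn--Nirenberg distance $\varpi/10$, and the estimate \eqref{eq:better_upper_bound_coeff} with $\epsilon$ small on the far set. On the near set it combines \eqref{eq:general_upper_bound_coeff} with \eqref{eq:approximate_decay} at $x=b$, $z=a$, $\xi=\eta=2\pi\ell'$, $\mu=2\pi\ell$, together with polynomial counting of the near $\ell'$. Your added remarks on the homogeneity of $G$ (to pass from $2\pi\ell$ to $\ell$) and on the $\epsilon$-dependence of the final constant are points the paper leaves implicit. The membership $y\in\mathcal{U}_{\alpha,\beta}$ you worry about is automatic, since $\mathcal{F}_{\alpha,\beta}$ is only defined over $\mathcal{U}_{\alpha,\beta}$. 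Note also that $\overline{B(b,\delta)}$ meeting $\supp\chi_{\alpha,\beta}$ would not by itself trigger the third property of $\delta$, which requires $\overline{B(b,\delta)}\cap R_{\alpha,\beta}\neq\emptyset$.
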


\begin{proof}
The definition of the norm is
\begin{equation*}
\n{L_{z,\alpha,\beta} \tilde{\mathbf{e}}_{a,\ell,p}}_{\epsilon,\alpha}^2 = \sum_{b \in P_{\alpha}} \sum_{\ell \in \mathbb{Z}^{n-1}} \sum_{p' = 1}^d e^{- 2 \epsilon G(b,\ell')}|\langle L_{z,\alpha,\beta} \tilde{\mathbf{e}}_{a,\ell,p}, \mathbf{e}_{b,\ell,p'} \rangle|^2.
\end{equation*}
We will denote by $\mathcal{E}_1$ the set of $(b,\ell',p') \in P_{\alpha} \times \mathbb{Z}^{n-1} \times \set{1,\dots,d}$ such that the distance between $\set{(y,2 \pi \ell) : y \in B(a,\delta)}$ and $\mathcal{F}_{\alpha,\beta}(\set{(y,2 \pi \ell') : y \in B(b,\delta)})$ is less than or equal to $\varpi/10$ and $\mathcal{E}_2$ the complement of $\mathcal{E}_1$ in $P_{\alpha} \times \mathbb{Z}^{n-1} \times \set{1,\dots,d}$.

It follows from Lemma \ref{lemma:non_stationary_phase} that
\begin{equation*}
\begin{split}
& \sum_{(b,\ell',p') \in \mathcal{E}_2} e^{- 2 \epsilon G(b,\ell')}|\langle L_{z,\alpha,\beta} \tilde{\mathbf{e}}_{a,\ell,p}, \mathbf{e}_{b,\ell,p'} \rangle|^2 \\ & \qquad \qquad \qquad \leq C e^{C|z|} \sum_{(b,\ell',p') \in P_{\alpha} \times \mathbb{Z}^{n-1} \times \set{1,\dots,d}} e^{-2 \epsilon G(b,\ell') - C^{-1} \max(|\ell|,|\ell'|)^{\frac{1}{s}}} \\
 & \qquad \qquad \qquad \leq C e^{C|z| - 2 \epsilon G(a,\ell)} \sum_{(b,\ell',p') \in P_{\alpha} \times \mathbb{Z}^{n-1} \times \set{1,\dots,d}} e^{- C^{-1} \max(|\ell|,|\ell'|)^{\frac{1}{s}}}  \\
 & \qquad \qquad \qquad \leq C e^{C|z|- 2 \epsilon G(a,\ell) - C^{-1} |\ell|^{\frac{1}{s}}},
\end{split}
\end{equation*}
where the constant $C$ may change from one line to another. In particular to go from the second to the third line, where we assume that $\epsilon$ is small enough.

Assume now that $(b,\ell',p') \in \mathcal{E}_1$. Let $y \in B(b,\delta)$ be such that the distance between $\mathcal{F}_{\alpha,\beta}(y,2 \pi \ell')$ and $(a, 2 \pi \ell)$ is less than $\varpi/5$ (recall that the diameter of $\set{(y,2 \pi \ell) : y \in B(a,\delta)}$ is less than $\varpi/10$). If $\ell \neq 0$, we can apply \eqref{eq:approximate_decay} (with $x = b, z = a, \xi = \eta = 2 \pi \ell'$ and $\mu = 2 \pi \ell$), to find that, for another value of $C > 0$,
\begin{equation*}
G(a, \ell) - G(b,\ell') \leq - C^{-1} |\ell|^{\frac{1}{s}}.
\end{equation*}
It follows then from \eqref{eq:general_upper_bound_coeff} that 
\begin{equation*}
\begin{split}
& e^{- 2 \epsilon G(b,\ell')}|\langle L_{z,\alpha,\beta} \tilde{\mathbf{e}}_{a,\ell,p}, \mathbf{e}_{b,\ell,p'} \rangle|^2 \\ & \qquad \qquad \qquad \qquad \leq C e^{C|z| - 2 \epsilon G(a,\ell)} e^{2 \epsilon (G(a,\ell) - G(b,\ell')} \\ & \qquad \qquad \qquad \qquad \leq C e^{C |z| - 2 \epsilon G(a,\ell) - C^{-1} |\ell|^{\frac{1}{s}}}. 
\end{split} 
\end{equation*} 
Since the cardinal of $\mathcal{E}_1$ grows at most polynomially with $|\ell|$, it establishes \eqref{eq:size_matrix_entry} when $\ell$ is non-zero (summing over $\mathcal{E}_1$ only has the effect of making $C$ larger). The proof in the case $\ell = 0$ is actually simpler since both $|\ell|^{1/s} = G(a,\ell) = 0$ and $\ell'$ stays within a bounded set.
\end{proof}

Now, if $\alpha$ and $\beta$ are such that $A_{\mathbf{k}}(\alpha,\beta) = 1$, we define an operator $\mathcal{L}_{z,\alpha,\beta}$ going from $\mathcal{H}_{\epsilon, \beta}$ to $\mathcal{H}_{\epsilon,\alpha}$ by
\begin{equation}\label{eq:definition_entry}
\mathcal{L}_{z,\alpha,\beta} f = \sum_{a \in P_{\beta}} \sum_{\ell \in \mathbb{Z}^{n-1}} \sum_{p = 1}^d \langle f, \mathbf{e}_{a,\ell,p} \rangle L_{z,\alpha,\beta} \tilde{\mathbf{e}}_{a,\ell,p}.
\end{equation}
It follows from Lemma \ref{lemma:matrix_entry} that this operator is well-defined if $\epsilon > 0$ is sufficiently small. We fix such an $\epsilon$ from now on. Let then $\mathcal{L}_z$ be the operator acting on $\mathcal{H}_\epsilon$ defined for $f = (f_{\alpha})_{\alpha \in I_{\mathbf{k}}}$ by
\begin{equation}\label{eq:abstract_operator}
(\mathcal{L}_z f)_{\alpha} = \sum_{\substack{\beta \in I_{\mathbf{k}} \\ A_{\mathbf{k}}(\alpha,\beta) = 1}} \mathcal{L}_{z,\alpha,\beta} f_{\beta}.
\end{equation}

We will see later (Lemma \ref{lemma:identification_determinant}) that the determinant $\det(I - \mathcal{L}_z)$ is the holomorphic continuation of $d_{\mathbf{k}}(z)$. Lemma \ref{lemma:individual_estimates} will then be a consequence of the following bound:

\begin{lemma}\label{lemma:abstract_determinant}
For every $z \in \mathbb{C}$, the operator $\mathcal{L}_z$ is trace class. Moreover, the map $z \mapsto \det(I - \mathcal{L}_z)$ is holomorphic and there is a constant $C > 0$ such that for every $z \in \mathbb{C}$ we have
\begin{equation}\label{eq:bound_abstract_determinant}
|\det(I - \mathcal{L}_z)| \leq C \exp( C |z|^{1 + (n-1)s}).
\end{equation}
\end{lemma}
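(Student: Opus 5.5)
We view $\mathcal{L}_z$ through its matrix in the "basis" indexed by $(\alpha,a,\ell,p)$, with $a \in P_\alpha$, $\ell \in \mathbb{Z}^{n-1}$ and $p \in \set{1,\dots,d}$. The vectors $e^{\epsilon G(a,\ell)}\mathbf{e}_{a,\ell,p}$ (suitably interpreted) give an isometry of $\mathcal{H}_{\epsilon,\alpha}$ onto a closed subspace of $\ell^2(P_\alpha\times\mathbb{Z}^{n-1}\times\set{1,\dots,d})$. By \eqref{eq:definition_entry}, $\mathcal{L}_z$ factors as a sum of rank-one operators
\begin{equation*}
f \mapsto \langle f, \mathbf{e}_{a,\ell,p}\rangle\, L_{z,\alpha,\beta}\tilde{\mathbf{e}}_{a,\ell,p}.
\end{equation*}
The functional $f\mapsto \langle f,\mathbf{e}_{a,\ell,p}\rangle$ has norm at most $e^{\epsilon G(a,\ell)}$ on $\mathcal{H}_{\epsilon,\beta}$, and by Lemma \ref{lemma:matrix_entry} the vector has norm at most $Ce^{-\epsilon G(a,\ell)-C^{-1}|\ell|^{1/s}+C|z|}$. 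The escape weights therefore cancel. We obtain $\mathcal{L}_z=\sum_{\nu} u_\nu\otimes v_\nu$, where $\nu$ runs over $\mathbb{N}$ after enumerating the indices by increasing $|\ell|$, and
\begin{equation*}
\n{u_\nu}\n{v_\nu}\le C e^{C|z|-C^{-1}|\ell_\nu|^{1/s}}.
\end{equation*}
Since the number of indices with $|\ell|\le R$ is $O(R^{n-1})$, we have $|\ell_\nu|\gtrsim \nu^{1/(n-1)}$. This gives $\sum_\nu \n{u_\nu}\n{v_\nu}<\infty$, so $\mathcal{L}_z$ is trace class with trace norm at most $Ce^{C|z|}$. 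Holomorphy in $z$ follows because each rank-one piece is entire in $z$ (the dependence is only through $e^{-z\tau_{\alpha,\beta}}$), and the series converges locally uniformly in trace norm. Hence $\det(I-\mathcal{L}_z)$ is entire.

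For the bound \eqref{eq:bound_abstract_determinant}, we use the standard estimate for determinants of operators given as nuclear series. Truncate at $N$: write $\mathcal{L}_z = A_N + B_N$, where $A_N$ has rank at most $N$ and $B_N=\sum_{\nu>N}u_\nu\otimes v_\nu$. Then
\begin{equation*}
|\det(I-\mathcal{L}_z)|\le \prod_\nu (1+\sigma_\nu(\mathcal{L}_z)),
\end{equation*}
and the singular values satisfy $\sigma_{N+1}(\mathcal{L}_z)\le \n{B_N}_{\mathrm{tr}}\le C e^{C|z|-c N^{1/((n-1)s)}}$. This yields
\begin{equation*}
\log|\det(I-\mathcal{L}_z)|\le \sum_{N}\log\bigl(1+Ce^{C|z|-cN^{1/((n-1)s)}}\bigr).
\end{equation*}
Split the sum at $N_0\simeq (C'|z|)^{(n-1)s}$. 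For $N\le N_0$, each term is $O(|z|)$, contributing $O(|z|\cdot |z|^{(n-1)s})=O(|z|^{1+(n-1)s})$. For $N>N_0$, the terms decay stretched-exponentially and sum to $O(1)$ uniformly in $z$. (Using $\log\n{\cdot}$ more carefully, e.g. via Weyl's inequality applied to $\prod_{\nu\le N}\sigma_\nu$, gives the same thing.)

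The main point requiring care is justifying $\sigma_{N+1}\le$ trace norm of the tail. The tail itself is a nuclear operator with the rank-one decomposition above, so its operator norm is at most its nuclear norm. Combined with rank$(A_N)\le N$, this gives $\sigma_{N+1}(\mathcal{L}_z)\le\n{B_N}$. It is also necessary to check that the functionals $\langle\cdot,\mathbf{e}_{a,\ell,p}\rangle$ are indeed bounded by $e^{\epsilon G(a,\ell)}$ on $\mathcal{H}_{\epsilon,\beta}$. This is immediate from the definition of $\n{\cdot}_{\epsilon,\beta}$ on the dense subspace, and it extends to the completion. Finally, the exponent in the estimate $\sum_{|\ell|>R}e^{-c|\ell|^{1/s}}\lesssim e^{-c'R^{1/s}}$ produces $N^{1/((n-1)s)}$, which is exactly where the improvement from $ns$ to $1+(n-1)s$ arises: the phase space is only $(n-1)$-dimensional in frequency, while the $|z|$ dependence is only $e^{C|z|}$.
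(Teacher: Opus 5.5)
Your proposal is correct. The rank-one decomposition and the trace-class argument are the paper's: it also writes $\mathcal{L}_z=\sum_m\lambda_m f_m\otimes l_m$ with $\n{f_m}=\n{l_m}=1$ and $|\lambda_m|\le Ce^{C|z|-C^{-1}m^{1/(s(n-1))}}$. This comes from cancelling the weights $e^{\pm\epsilon G(a,\ell)}$ in \eqref{eq:definition_entry} against Lemma~\ref{lemma:matrix_entry} and relabelling $\ell\in\mathbb{Z}^{n-1}$, exactly as you do.

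The routes differ in the proof of \eqref{eq:bound_abstract_determinant}. The paper proceeds in four steps:
\begin{itemize}
\item it expands $\det(I-\mathcal{L}_z)$ in Grothendieck's series $\sum_p(-1)^p\sum_{m_1<\dots<m_p}\lambda_{m_1}\cdots\lambda_{m_p}\det(l_{m_i}(f_{m_j}))$;
\item it bounds each $p\times p$ determinant by $p^{p/2}$ using Hadamard's inequality;
\item it proves by induction that $\sum_{m_1<\dots<m_p}\exp(-C^{-1}\sum_k m_k^{1/(s(n-1))})\le B\exp(-B^{-1}p^{1+1/(s(n-1))})$;
\item it maximizes $pC|z|-C^{-1}p^{1+1/(s(n-1))}$ over $p$, the maximum being attained at $p\sim|z|^{s(n-1)}$.
\end{itemize}
You instead use that $\mathcal{H}_\epsilon$ is a Hilbert space. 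The bound $|\det(I-\mathcal{L}_z)|\le\prod_\nu(1+\sigma_\nu)$ together with $\sigma_{N+1}\le\n{B_N}$ (approximation numbers) makes the exponent $1+(n-1)s$ appear directly. It is the number of singular values exceeding $1$, roughly $|z|^{(n-1)s}$, times the logarithm of the largest, roughly $|z|$.

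What each buys: your argument is shorter and more transparent. It avoids both the inductive estimate on the multi-index sum and the need to absorb the $p^{p/2}$ factor. The paper's argument uses only a nuclear decomposition with normalized vectors and functionals, so it works in Banach spaces where singular values are unavailable. That generality is not needed here.

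The one place where you are thinner than the paper is holomorphy. Saying ``the dependence is only through $e^{-z\tau_{\alpha,\beta}}$'' gives pointwise holomorphy of $L_{z,\alpha,\beta}\tilde{\mathbf{e}}_{a,\ell,p}$, not holomorphy as an $\mathcal{H}_{\epsilon,\alpha}$-valued map. The paper supplies this in two steps. It first proves continuity in $\mathcal{H}_{\epsilon,\alpha}$ by dominated convergence, using that the bounds in the proof of Lemma~\ref{lemma:matrix_entry} are locally uniform in $z$. It then applies a Cauchy integral argument. The same locally uniform bounds close this in your framework in a line, so it is a matter of completeness rather than a gap.
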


\begin{proof}
Let us start by noticing that $\mathcal{L}_z$ may be written as\footnote{We write $f_m \otimes l_m$ for the rank $1$ operator $x \mapsto l_m(x) f_m$.}
\begin{equation}\label{eq:sum_rank_one}
\mathcal{L}_z = \sum_{m \geq 0} \lambda_m f_m \otimes l_m
\end{equation}
where $f_m \in \mathcal{H}_{\epsilon}$ and $l_m \in \mathcal{H}_\epsilon^*$ have norm $1$, while the sequence of complex numbers $(\lambda_m)_{m \geq 0}$ satisfies
\begin{equation}\label{eq:bound_coefficients}
|\lambda_m| \leq C e^{C|z| - C^{-1} m^{\frac{1}{s(n-1)}}}
\end{equation} 
for some $C > 0$ (that does not depend on $z$) and every $m \geq 0$. Indeed, each of the entry $\mathcal{L}_{z,\alpha,\beta}$ has this property (according to their definition \eqref{eq:definition_entry} and Lemma \ref{lemma:matrix_entry}), so that $\mathcal{L}_z$ can be written in this form just by relabelling. The exponent $1/s$ from Lemma \ref{lemma:matrix_entry} has been replaced here by $1/(s(n-1))$ because we included the summation over $\ell \in \mathbb{Z}^{n-1}$ into the single index $m$.

Since $\mathcal{L}_z$ is the sum of an absolutely convergent series of rank $1$ operator, it is trace class. To prove that $z \mapsto \det(I - \mathcal{L}_z)$ is holomorphic, we only need to prove that $z \mapsto \mathcal{L}_z$ is holomorphic (valued in the space of trace class operator) and thus that each term in \eqref{eq:sum_rank_one} is holomorphic in $z$ (since the series converges uniformly locally in $z$). Comparing with \eqref{eq:definition_entry}, we see that we want to prove that for every $\alpha,\beta \in I_{\mathbf{k}}$ such that $A_{\mathbf{k}}(\alpha,\beta) = 1$, every $a \in P_{\beta}$, every $\ell \in \mathbb{Z}^{n-1}$ and every $p \in \set{1,\dots,d}$ the map $z \mapsto L_{z,\alpha,\beta} \tilde{\mathbf{e}}_{a,\ell,p}$ (from $\mathbb{C}$ to $\mathcal{H}_{\epsilon,\alpha}$) is holomorphic. 

Let us start by noticing that for every $b \in P_{\alpha}, \ell' \in \mathbb{Z}^{n-1}$ and $p' \in \set{1,\dots,d}$ the function $z \mapsto \langle L_{z,\alpha,\beta} \mathbf{e}_{a,\ell,p} , \mathbf{e}_{b,\ell',p'} \rangle$ is continuous (it is even holomorphic by differentiation under the integral). For $z \in \mathbb{C}$, we can bound the terms appearing in the norm $\n{( L_{z,\alpha,\beta} - L_{z+h,\alpha,\beta}) \mathbf{e}_{a,\ell,p}}^2$, with $h$ small, as in the proof of Lemma \ref{lemma:matrix_entry}, and apply the dominated convergence theorem to prove that the map $z \mapsto L_{z,\alpha,\beta} \mathbf{e}_{a,\ell,p}$ is continuous as a map from $\mathbb{C}$ to $\mathcal{H}_{\epsilon,\alpha}$. Pick $z_0 \in \mathbb{C}$ and some $\delta > 0$ and define the integral
\begin{equation*}
g(z) = \frac{1}{2 i \pi} \int_{\partial \mathbb{D}(z_0,\delta)} \frac{L_{w,\alpha,\beta} \mathbf{e}_{a,\ell,p}}{w-z}\mathrm{d}w
\end{equation*}
for $z \in \mathbb{D}(z_0,\delta)$. We proved the continuity of the integrand in order to justify the definition of the integral. Notice that the function $g$ is holomorphic (by differentiation under the integral). Moreover, the map $z \mapsto L_{z,\alpha,\beta} \mathbf{e}_{a,\ell,p}$ is also continuous from $\mathbb{C}$ to $C^\infty(\mathbb{R}^{n-1})$ and it follows that the function $g$ actually takes value in the subset $\set{f \in C_c^\infty(\mathbb{R}^{n-1}) : \supp f \subseteq V_\alpha \textup{ and } \n{f}_{\epsilon,\alpha} < + \infty}$ of $\mathcal{H}_{\epsilon,\alpha}$. Consequently, for $z \in \mathbb{D}(z_0,\delta)$, we can evaluate $g(z)$ at a point $x \in \mathbb{R}^{n-1}$, and it follows from Cauchy's formula that $g(z)(x) = L_{z,\alpha,\beta} \mathbf{e}_{a,\ell,p}(x)$. Hence, $g$ is just the restriction of $z \mapsto L_{z,\alpha,\beta} \mathbf{e}_{a,\ell,p}$ to a $\mathbb{D}(z_0,\delta)$, which proves that $z \mapsto L_{z,\alpha,\beta} \mathbf{e}_{a,\ell,p}$ is holomorphic in a neighbourhood of $z_0$.

It remains to prove \eqref{eq:bound_abstract_determinant}. To do so, recall the formula \eqref{eq:sum_rank_one} and write
\begin{equation*}
\det(I - \mathcal{L}_z) = 1 + \sum_{p \geq 1} (-1)^p \sum_{0 \leq m_1 < \dots < m_p} \lambda_{m_1} \dots \lambda_{m_p} \det((l_{m_i}(f_{m_j})_{1 \leq i,j \leq p})
\end{equation*}
as in \cite[Chapter 2, p.13]{grothendieck_55}. Using Hadamard's inequality and \eqref{eq:bound_coefficients}, we find that
\begin{equation*}
|\det(I - \mathcal{L}_z)| \leq 1 + \sum_{p \geq 1} C^p e^{p C |z|} p^{\frac{p}{2}} \sum_{0 \leq m_1 < \dots < m_p} \exp( - C^{-1} \sum_{k = 1}^p m_k^{\frac{1}{s(n-1)}}).
\end{equation*}
Notice that 
\begin{equation*}
e^{- (2C)^{-1}(k-1)^{\frac{1}{s(n-1)}}} - e^{- (2C)^{-1} k^{\frac{1}{s(n-1)}}} \underset{k \to + \infty}{\sim} \frac{k^{\frac{1}{s(n-1)} - 1} e^{- (2C)^{-1} k^{\frac{1}{s(n-1)}}}}{2C s (n-1)},
\end{equation*}
and thus, for $k$ large enough, we have
\begin{equation*}
e^{- C^{-1} k^{\frac{1}{s(n-1)}}} \leq e^{- (2C)^{-1}(k-1)^{\frac{1}{s(n-1)}}} - e^{- (2C)^{-1} k^{\frac{1}{s(n-1)}}}.
\end{equation*}
It follows that there is a constant $A \geq 2C $ such that for every $m \geq 0$, we have
\begin{equation*}
\sum_{k \geq m} e^{ - C^{-1} k^{\frac{1}{s(n-1)}}} \leq A e^{-A^{-1} m^{\frac{1}{s(n-1)}}}.
\end{equation*}
Hence, for $p \geq 1$, we have
\begin{equation*}
\begin{split}
& \sum_{0 \leq m_1 < \dots < m_{p+1}} \exp( - C \sum_{k = 1}^{p+1} m_k^{\frac{1}{s(n-1)}}) \\ & \qquad \qquad \qquad \leq \sum_{0 \leq m_1 < \dots < m_p} \exp( - C \sum_{k = 1}^p m_k^{\frac{1}{s(n-1)}})\sum_{m > m_{p}} \exp(-C m^{\frac{1}{s(n-1)}}) \\
   & \qquad \qquad \qquad \leq A \sum_{0 \leq m_1 < \dots < m_p} \exp( - C \sum_{k = 1}^p m_k^{\frac{1}{s(n-1)}}) \exp(-A^{-1} (m_p + 1)^{\frac{1}{s(n-1)}}) \\
   & \qquad \qquad \qquad \leq A e^{-A^{-1} p^{\frac{1}{s(n-1)}}} \sum_{0 \leq m_1 < \dots < m_p} \exp( - C \sum_{k = 1}^p m_k^{\frac{1}{s(n-1)}}).
\end{split}
\end{equation*}
Hence, we find by induction that
\begin{equation*}
\begin{split}
\sum_{0 \leq m_1 < \dots < m_{p}} \exp( - C \sum_{k = 1}^{p} m_k^{\frac{1}{s(n-1)}}) & \leq A_0 A^p\exp(- A^{-1} \sum_{k= 1}^{p-1} k^{\frac{1}{s(n-1)}})\\
     & \leq B \exp( - B^{-1} p^{1 + \frac{1}{s(n-1)}}),
\end{split}
\end{equation*}
for some new constant $A_0,B > 0$ (the constant $A_0$ is just for the initialization in the induction). Hence, up to making $C$ larger, we get
\begin{equation*}
|\det(I - \mathcal{L}_z)| \leq C \sum_{p \geq 0} e^{p C|z| - C^{-1} p^{1 + \frac{1}{s(n-1)}}}.
\end{equation*}
Assume now that $|z| \geq 1$. Then, the terms with $p \geq T |z|^{n (s-1)}$ (for some large constant $T$) may be bounded by the terms of a geometric series that does not depend on $z$, and thus they only contribute by a bounded term. Each term with $p < T |z|^{s(n-1)}$ has at most the size of the right hand side of \eqref{eq:bound_abstract_determinant}. The number of these terms is polynomial in $|z|$ and can consequently be ignored (by making $C$ larger).
\end{proof}

Our goal is now to compute $\det(I - \mathcal{L}_z)$ for $\re z \gg 1$. This will be done by computing the traces of the iterates of $\mathcal{L}_z$. To this end, for each integer $m \geq 1$, we denote by $\mathfrak{P}_{\mathbf{k},m}$ the set of orbits of the suspension flow $\tilde{\psi}_{\mathbf{k}}$ on $\widetilde{\Sigma}_{\mathbf{k}}$ that corresponds to an orbit of length $m$ of the shift on $\Sigma_{\mathbf{k}}$ (see Remark \ref{remark:suspension_flow}).

\begin{lemma}\label{lemma:value_trace}
For every $z \in \mathbb{C}$ and $m \geq 1$, we have
\begin{equation*}
\tr(\mathcal{L}_z^m) =  m \sum_{\gamma \in \mathfrak{P}_{\mathbf{k},m}}  \frac{T_{\rho_{\mathbf{k}}(\gamma)}^{\#}}{T_{\rho_{\mathbf{k}}(\gamma)}} \frac{\tr(\Phi_{\rho_{\mathbf{k}}(\gamma)})}{|\det(I - \mathcal{P}_{\rho_{\mathbf{k}}(\gamma)})|} e^{- z T_{\rho_{\mathbf{k}}(\gamma)}}.
\end{equation*}
\end{lemma}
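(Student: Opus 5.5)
We will compute $\tr(\mathcal{L}_z^m)$ by expanding $\mathcal{L}_z^m$ along admissible words. From \eqref{eq:abstract_operator}, the diagonal block of $\mathcal{L}_z^m$ at $\alpha_0$ is a sum over cycles $\alpha_0\to\alpha_1\to\dots\to\alpha_{m-1}\to\alpha_0$ with $A_{\mathbf{k}}(\alpha_j,\alpha_{j+1})=1$ of the products $\mathcal{L}_{z,\alpha_0,\alpha_1}\cdots\mathcal{L}_{z,\alpha_{m-1},\alpha_0}$. By linearity of the trace it suffices to compute the trace of each such composition. We first express the composition through the basis $\mathbf{e},\tilde{\mathbf{e}}$: using \eqref{eq:definition_entry} and \eqref{eq:inverse_Fourier}, the trace on $\mathcal{H}_{\epsilon,\alpha_0}$ equals
\begin{equation*}
\sum_{a\in P_{\alpha_0}}\sum_{\ell,p}\langle \mathcal{L}_{z,\alpha_0,\alpha_1}\cdots\mathcal{L}_{z,\alpha_{m-1},\alpha_0}\tilde{\mathbf{e}}_{a,\ell,p},\mathbf{e}_{a,\ell,p}\rangle,
\end{equation*}
which is absolutely convergent by Lemma \ref{lemma:matrix_entry}. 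Since the intermediate functions $L_{z,\alpha,\beta}\tilde{\mathbf{e}}$ are smooth and supported in $\mathcal{V}_\alpha$, \eqref{eq:inverse_Fourier} shows that on such functions $\mathcal{L}_{z,\alpha,\beta}$ coincides with $L_{z,\alpha,\beta}$. Thus the composition is the genuine transfer operator
\begin{equation*}
u\mapsto \chi_{\alpha_0,\alpha_1}\cdots(\chi_{\alpha_{m-1},\alpha_0}\circ F^{(m-1)})\, e^{-z\tau^{(m)}} N^{(m)}\, u\circ F^{(m)},
\end{equation*}
with Birkhoff sums of roof functions and products of the matrices $N_{\alpha_j,\alpha_{j+1}}$ along the composed map $F^{(m)}=F_{\alpha_{m-1},\alpha_0}\circ\dots\circ F_{\alpha_0,\alpha_1}$.

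Next we use the standard flat trace computation. Summing over $\ell,p$ with Poisson summation turns $\sum_{a,\ell}\langle \cdot\,\tilde{\mathbf{e}}_{a,\ell,p},\mathbf{e}_{a,\ell,p}\rangle$ into $\sum_a\int \theta_a(x)\tilde{\theta}_a(F^{(m)}(x)) g(x)\,\delta(F^{(m)}(x)-x)\,dx$ (justified by the support condition $\supp\tilde\theta_a\subset B(a,2\delta/3)\subset]-1,1[^{n-1}$, so only the zero lattice translate contributes). Lemma \ref{lemma:useful_later} guarantees that every fixed point $x$ of $F^{(m)}$ lying in the relevant supports is nondegenerate, so the delta contributes $|\det(I-DF^{(m)}(x))|^{-1}$. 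Using $\tilde\theta_a=1$ on $\supp\theta_a$ and $\sum_a\theta_a=1$, we obtain the sum over fixed points $x$ of $F^{(m)}$ of $\chi$-products times $e^{-z\tau^{(m)}(x)}\tr N^{(m)}(x)/|\det(I-DF^{(m)}(x))|$. One must justify this rigorously for distribution-like traces; we would regularize by truncating $|\ell|\le L$ and use the decay of Lemma \ref{lemma:non_stationary_phase} to pass to the limit, which is the main technical step.

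Finally identify fixed points with orbits. A fixed point of $F^{(m)}$ with the cutoffs nonzero yields a periodic $\phi$-orbit shadowed by the cycle within $2\kappa<\nu$; by Proposition \ref{proposition:expansivity} and Lemma \ref{lemma:better_partition}, it lies in $K$ and equals $\pi_A\circ p_{\mathbf{k}}$ of the periodic sequence $(\alpha_j)$, and conversely each periodic sequence gives exactly one fixed point (uniqueness from expansivity with $\epsilon=T_0/2$), where cutoffs equal $1$. There $\tau^{(m)}=T_{\rho_{\mathbf{k}}(\gamma)}$, $\tr N^{(m)}=\tr\Phi_{\rho_{\mathbf{k}}(\gamma)}$ (transpose and conjugation do not change trace), and $DF^{(m)}$ is conjugate to $\mathcal{P}_{\rho_{\mathbf{k}}(\gamma)}$. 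Summing over all $\alpha_0$ and all cyclic words, each orbit $\gamma\in\mathfrak{P}_{\mathbf{k},m}$ corresponds to $m/\text{(minimal period)}$ rotations times multiplicity, i.e. the periodic points of $\sigma^m$ in $\Sigma_{\mathbf{k}}$: a shift orbit of minimal period $m_0\mid m$ gives $m_0$ points, and the prefactor bookkeeping $m\,T^\#/T$ versus $m_0$ must be matched carefully using that $\gamma$ may be non-primitive in $\Sigma_{\mathbf{k}}$ while $\rho_{\mathbf{k}}(\gamma)$'s primitivity differs — this combinatorial check is the second delicate point; we expect it to reduce to the counting in Remark \ref{remark:suspension_flow}, with $T^\#/T$ coming from $\rho_{\mathbf{k}}(\gamma)$ as written.
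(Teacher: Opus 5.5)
Your route is the paper's: expand $\tr(\mathcal L_z^m)$ over cycles, collapse the compositions of the $\mathcal L_{z,\alpha,\beta}$ into the genuine transfer operator via \eqref{eq:inverse_Fourier}, apply Poisson summation using Lemma~\ref{lemma:useful_later}, show via Proposition~\ref{proposition:expansivity} and Lemma~\ref{lemma:better_partition} that $y_*=\pi_A(p_{\mathbf k}((\alpha_{j \bmod m})_j))$ is the only fixed point, and evaluate there. You are right that the fixed point must be shown to lie in $K$ before expansivity applies. Local maximality $K=\bigcap_t\phi_t(U)$ gives this, and the paper skips it.

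The Poisson step needs no truncation or limiting argument. Lemma~\ref{lemma:useful_later} does not assume $x_0$ is a fixed point, so $DF^{(m)}-I$ is invertible on the whole support of the integrand, not only at fixed points. Hence $x\mapsto F^{(m)}(x)-x$ is a local diffeomorphism there. After a partition of unity and the change of variables $y=F^{(m)}(x)-x$, each piece is $\sum_\ell\hat\psi(-\ell)=\sum_{k\in\mathbb Z^{n-1}}\psi(k)=\psi(0)$ for some $\psi\in C_c^\infty(]-1,1[^{n-1})$. Only $k=0$ contributes because $x$ and $F^{(m)}(x)$ both lie in $B(a,2\delta/3)$, so $F^{(m)}(x)-x$ stays in a small ball around $0$. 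Where $B(a,2\delta/3)$ sits is irrelevant.

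The genuine problem is the final step you flag, and it cannot be closed the way you expect. A given $\gamma\in\mathfrak P_{\mathbf k,m}$ arises from exactly $m^\#$ cycles $(\alpha_0,\dots,\alpha_m)$: the distinct rotations of the word, with $m^\#$ the minimal period of the shift orbit. Each contributes the same value, with no further multiplicity. So what your computation proves is
\begin{equation*}
\tr(\mathcal L_z^m)=\sum_{\gamma\in\mathfrak P_{\mathbf k,m}} m^\#(\gamma)\,\frac{\tr(\Phi_{\rho_{\mathbf k}(\gamma)})}{|\det(I-\mathcal P_{\rho_{\mathbf k}(\gamma)})|}\,e^{-zT_{\rho_{\mathbf k}(\gamma)}} .
\end{equation*}
By the computation in Remark~\ref{remark:number_antecedents}, $m^\#(\gamma)=m\,T^\#_\gamma/T_\gamma$, where $T^\#_\gamma$ is the primitive length of $\gamma$ \emph{in the suspension} $\widetilde\Sigma_{\mathbf k}$.

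This is not $m\,T^\#_{\rho_{\mathbf k}(\gamma)}/T_{\rho_{\mathbf k}(\gamma)}$ in general. As the paper's own footnote says, $\gamma$ can be primitive in $\widetilde\Sigma_{\mathbf k}$ while $\rho_{\mathbf k}(\gamma)=\gamma_0^d$ with $d\ge2$; these are exactly the orbits counted by $N^{\mathbf k}_{d\tau_0}$, $d>1$, in \eqref{eq:bowen_counting_formula}. Such a $\gamma$ carries weight $m$ in the trace but only $m/d$ on the right-hand side of the statement. For the trivial lift on the trivial line bundle, where all weights are positive, the two sides then genuinely differ.

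The paper's proof makes the same identification silently. Remark~\ref{remark:number_antecedents} computes the primitive length of $\gamma$ but labels it $T^\#_{\rho_{\mathbf k}(\gamma)}$, and the last line of the proof even writes the inverted ratio. So this is not a check you can defer to Remark~\ref{remark:suspension_flow}. The correct conclusion of your argument is the formula with $T^\#_\gamma/T_\gamma$ computed in $\widetilde\Sigma_{\mathbf k}$.

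This is harmless for the main theorems if $d_{\mathbf k}$ is defined with that weight. Lemma~\ref{lemma:identification_determinant} then goes through unchanged. The identity $d=f/g$ then follows directly from \eqref{eq:bowen_counting_formula}, whose alternating sum kills the contributions of all primitive suspension orbits lying over $\gamma_0^d$ with $d\ge2$, rather than from \eqref{eq:correction_counting}.
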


\begin{remark}\label{remark:number_antecedents}
Before starting the proof of Lemma \ref{lemma:value_trace}, let us describe the elements of $\mathfrak{P}_{\mathbf{k},m}$ for $m \geq 1$. By definition, $\gamma \in \mathfrak{P}_{\mathbf{k},m}$ corresponds to a periodic orbit $\set{x, \sigma x, \dots, \sigma^{m-1} x}$ for the shift $\sigma$ on $\Sigma_{\mathbf{k}}$ (with $\sigma^m x = x$). Notice that a point $x \in \Sigma_{\mathbf{k}}$ satisfies $\sigma^m x = x$ if and only if it is of the form $x = (\alpha_{k \mod m})_{k \in \mathbb{Z}}$ with $\alpha_0,\dots,\alpha_{m-1} \in I_{\mathbf{k}}$ that satisfy $A_{\mathbf{k}}(\alpha_0,\alpha_1) = \dots = A_{\mathbf{k}}(\alpha_{m-2}, \alpha_{m-1}) = A_{\mathbf{k}}(\alpha_{m-1},\alpha_0) = 1$.

Moreover, for such an $x$, if $m^\#$ denote the minimal period of $x$, then there are $m^\#$ points in the orbit of $x$. Hence, if $\gamma$ is the orbit in $\mathfrak{P}_{\mathbf{k},m}$ that corresponds to the orbit of $x$ (as explained in Remark \ref{remark:suspension_flow}), there are $m^\#$ points in $\Sigma_{\mathbf{k}}$ that give rise to this orbit. Notice also that the primitive length and length of $\gamma$ are respectively
\begin{equation*}
\sum_{k = 0}^{m^\# - 1} t_{\mathbf{k}}(\sigma^k(x)) \textup{ and }\sum_{k = 0}^{m-1} t_{\mathbf{k}}(\sigma^k(x)) = \frac{m}{m^\#} \sum_{k = 0}^{m^\# - 1} t_{\mathbf{k}}(\sigma^k(x)).
\end{equation*}
It follows that the number of points in $\Sigma_{\mathbf{k}}$ that give rise to the orbit $\gamma$ is $m \frac{T_{\rho_{\mathbf{k}}(\gamma)}^\#}{T_{\rho_{\mathbf{k}}(\gamma)}}$.
\end{remark}

\begin{proof}[Proof of Lemma \ref{lemma:value_trace}]
Let us fix $z \in \mathbb{C}$ and $m \geq 1$. We may write $\mathcal{L}_z^m$ as a matrix of operator $((\mathcal{L}_z^m)_{\alpha,\beta})_{\alpha,\beta \in I_{\mathbf{k}}}$ where 
\begin{equation*}
(\mathcal{L}_z^m)_{\alpha,\beta} = \sum_{\substack{\alpha_0, \dots , \alpha_m \in I_{\mathbf{k}} \\ \alpha_0 = \alpha,\alpha_m = \beta \\A_{\mathbf{k}}(\alpha_0,\alpha_1) = \dots = A_{\mathbf{k}}(\alpha_{m-1}, \alpha_m) = 1}} \mathcal{L}_{z,\alpha_0,\alpha_1} \dots \mathcal{L}_{z,\alpha_{m-1},\alpha_m}.
\end{equation*}
Only the diagonal terms contribute to the trace of $\mathcal{L}_z^m$ and thus we have
\begin{equation*}
\begin{split}
\tr(\mathcal{L}_z^m) & = \sum_{\alpha \in I_{\mathbf{k}}} \tr((\mathcal{L}_z^m)_{\alpha,\alpha}) \\
                     & = \sum_{\substack{\alpha_0, \dots , \alpha_m \in I_{\mathbf{k}} \\ \alpha_0 = \alpha_m \\A_{\mathbf{k}}(\alpha_0,\alpha_1) = \dots = A_{\mathbf{k}}(\alpha_{m-1}, \alpha_m) = 1}} \tr(\mathcal{L}_{z,\alpha_0,\alpha_1} \dots \mathcal{L}_{z,\alpha_{m-1},\alpha_m}).
\end{split}
\end{equation*}
Let us fix $\alpha_0,\dots,\alpha_m \in I_{\mathbf{k}}$ such that $\alpha_0 = \alpha_m$ and $A_{\mathbf{k}}(\alpha_0,\alpha_1) = \dots = A_{\mathbf{k}}(\alpha_{m-1}, \alpha_m) = 1$ and compute the contribution of the corresponding term. Recalling the definition \eqref{eq:definition_entry}, we find that $\mathcal{L}_{z,\alpha_0,\alpha_1} \dots \mathcal{L}_{z,\alpha_{m-1},\alpha_m}$ maps $f$ to 
\begin{equation*}
\begin{split}
& \sum_{a_1 \in P_{\alpha_1},\dots, a_m \in P_{\alpha_m}} \sum_{\ell_1,\dots, \ell_m \in \mathbb{Z}^{n-1}} \sum_{1 \leq p_1,\dots,p_m \leq d}  \\ & \qquad \qquad \langle f, \mathbf{e}_{a_m, \ell_m,p_m} \rangle \prod_{j = 1}^{m-1} \langle L_{z,\alpha_{j},\alpha_{j+1}} \tilde{\mathbf{e}}_{a_{j+1},\ell_{j+1}, p_{j+1}}, \mathbf{e}_{a_{j},\ell_j,p_j} \rangle L_{z,\alpha_0,\alpha_1} \tilde{\mathbf{e}}_{a_1,\ell_1,p_1}.
\end{split}
\end{equation*}
The trace of a rank $1$ operator $e \otimes l$ is $l(e)$ and consequently
\begin{equation*}
\begin{split}
& \tr(\mathcal{L}_{z,\alpha_0,\alpha_1} \dots \mathcal{L}_{z,\alpha_{m-1},\alpha_m}) \\ &  \quad = \sum_{a_1 \in P_{\alpha_1},\dots, a_m \in P_{\alpha_m}} \sum_{\ell_1,\dots, \ell_m \in \mathbb{Z}^{n-1}} \sum_{1 \leq p_1,\dots,p_m \leq d}  \\ & \qquad \qquad  \langle L_{z,\alpha_0,\alpha_1} \tilde{\mathbf{e}}_{a_1,\ell_1,p_1}, \mathbf{e}_{a_m, \ell_m,p_m} \rangle \prod_{j = 1}^{m-1} \langle L_{z,\alpha_{j},\alpha_{j+1}} \tilde{\mathbf{e}}_{a_{j+1},\ell_{j+1}, p_{j+1}}, \mathbf{e}_{a_{j},\ell_j,p_j} \rangle .
\end{split}
\end{equation*}
Using repetitively \eqref{eq:inverse_Fourier}, we find that
\begin{equation*}
\begin{split}
& \tr(\mathcal{L}_{z,\alpha_0,\alpha_1} \dots \mathcal{L}_{z,\alpha_{m-1},\alpha_m}) \\ &  \qquad = \sum_{a \in P_{\alpha_0}} \sum_{\ell \in \mathbb{Z}^{n-1}} \sum_{1 \leq p \leq d} \langle L_{z,\alpha_0,\alpha_1} \dots L_{z,\alpha_{m-1},\alpha_m} \tilde{\mathbf{e}}_{a,\ell,p},\mathbf{e}_{a,\ell,p} \rangle.
\end{split}
\end{equation*}
For $a \in P_{\alpha_0}, \ell \in \mathbb{Z}^{n-1}$ and $1 \leq p \leq d$, we have explicitly
\begin{equation*}
\langle L_{z,\alpha_0,\alpha_1} \dots L_{z,\alpha_{m-1},\alpha_m} \tilde{\mathbf{e}}_{a,\ell,p},\mathbf{e}_{a,\ell,p} \rangle = \int_{\mathbb{R}^{n-1}} h_{a,p}(x) e^{2 i \pi (F_{\alpha_0,\dots,\alpha_m}(x) - x)\cdot \ell} \mathrm{d}x,
\end{equation*}
where 
\begin{equation*}
\begin{split}
h_{a,p}(x) = & \tilde{\theta}_a(F_{\alpha_0,\dots,\alpha_m}(x)) \theta_a(x) \left(\prod_{k = 0}^{m-1} \chi_{\alpha_k,\alpha_{k+1}}(F_{\alpha_0,\dots,\alpha_{k}}(x)) \right)\\ & \qquad \qquad \times \exp( - z \sum_{k= 0}^{m-1} \tau_{\alpha_k,\alpha_{k+1}}(F_{\alpha_0,\dots,\alpha_{k}}(x))) \langle N_{\alpha_0,\dots,\alpha_m}(x) e_p, e_p \rangle.
\end{split}
\end{equation*}
We also used the notations
\begin{equation*}
F_{\alpha_0,\dots,\alpha_{k}} = F_{\alpha_{k-1},\alpha_{k}} \circ \dots \circ F_{\alpha_1,\alpha_2} \circ F_{\alpha_0,\alpha_1}
\end{equation*}
for $k = 0,\dots,m-1$ (with the convention that $F_{\alpha_0,\alpha_0}$ is the identity) and
\begin{equation*}
N_{\alpha_0,\dots, \alpha_{m}}(x) = N_{\alpha_0,\alpha_1}(x) N_{\alpha_1,\alpha_2}( F_{\alpha_0,\alpha_1}(x)) \dots N_{\alpha_{m-1},\alpha_m} (F_{\alpha_0,\dots,\alpha_{m-1}}(x))
\end{equation*}
for $x$ in the support of $\prod_{k = 0}^{m-1} \chi_{\alpha_k,\alpha_{k+1}} \circ F_{\alpha_0,\dots,\alpha_{k}}$.

Notice that if $x_0$ belongs to the support of $h_{a,p}$ then Lemma \ref{lemma:useful_later} implies that $D F_{\alpha_0,\dots,\alpha_{m}} (x_0) - I$ is invertible. Hence, $x \mapsto F_{\alpha_0,\dots,\alpha_m}(x)-x$ induces a diffeomorphism from a neighbourhood of $x_0$ to its image. Consequently, we may cover the support of $h_{a,p}$ by open sets $V_1,\dots, V_q$ such that for $j = 1,\dots,q$ the map $x \mapsto F_{\alpha_0,\dots,\alpha_m}(x) - x$ induces a diffeomorphism from $V_j$ to an open subset\footnote{If $x$ is in the support of $h_{a,p}$ then $x$ and $F_{\alpha_0,\dots,\alpha_{m}}(x)$ belongs respectively to the support of $\theta_a$ and $\tilde{\theta}_a$, and thus to $\overline{B}(a,2\delta/3)$. Hence, $x - F_{\alpha_0,\dots,\alpha_m}(x) \in \overline{B}(0,\delta) \subseteq ]-1,1[^{n-1}$.} $W_j$ of $]-1,1[^{n-1}$. We denote by $g_j : W_j \to V_j$ the inverse of this map. Let then $(w_j)_{1 \leq j \leq q}$ be a family of $C^\infty$ functions on $\mathbb{R}^{n-1}$ such that $\sum_{j = 1}^q w_j \equiv 1$ on a neighbourhood of the support of $h_{a,p}$ and, for $j = 1,\dots,q$, the function $w_j$ is supported in $V_j$.

Fix $a \in P_{\alpha_0}$ and $p \in \set{1,\dots,d}$. For $j = 1,\dots,q$, the change of variable formula implies that for $\ell \in \mathbb{Z}^{n-1}$ we have
\begin{equation*}
\begin{split}
\int_{\mathbb{R}^{n-1}} w_j(x) h_{a,p}(x) & e^{2 i \pi (F_{\alpha_0,\dots,\alpha_m}(x) - x)\cdot \ell} \mathrm{d}x \\ & \qquad = \int_{W_j} w_j \circ g_j(x) h_{a,p} \circ g_j(x) e^{2 i \pi x \cdot \ell} |\det D g_j(x)| \mathrm{d}x.
\end{split}
\end{equation*}
Summing over $\ell \in \mathbb{Z}^{n-1}$, the Poisson summation formula implies that
\begin{equation*}
\begin{split}
\sum_{\ell \in \mathbb{Z}^{n-1}}\int_{\mathbb{R}^{n-1}} w_j(x) h_{a,p}(x) & e^{2 i \pi (F_{\alpha_0,\dots,\alpha_m}(x) - x)\cdot \ell} \mathrm{d}x \\ & \qquad \qquad \qquad \quad = \frac{w_j(g_j(0)) h_{a,p}(g_j(0))}{|\det( I - D F_{\alpha_0,\dots,\alpha_m}(g_j(0))|},
\end{split}
\end{equation*}
where we recall that $w_j$ is supported in $]-1,1[^{n-1}$. If $0$ does not belong to $W_j$, then the right hand side is zero. Notice that there is a fixed point for $F_{\alpha_0,\dots,\alpha_m}$ in $V_j$ if and only if $0$ belongs to $W_j$, and when it is the case $g_j(0)$ is the only fixed point of $F_{\alpha_0,\dots,\alpha_m}$ in $V_j$. Hence, we have
\begin{equation*}
\begin{split}
\sum_{\ell \in \mathbb{Z}^{n-1}}\int_{\mathbb{R}^{n-1}} w_j(x) h_{a,p}(x) & e^{2 i \pi (F_{\alpha_0,\dots,\alpha_m}(x) - x)\cdot \ell} \mathrm{d}x \\ & \qquad \quad = \sum_{x_* \in \Fix(F_{\alpha_{0},\dots,\alpha_m})} \frac{w_j(x_*) h_{a,p}(x_*)}{|\det( I - D F_{\alpha_0,\dots,\alpha_m}(x_*)|}.
\end{split}
\end{equation*}
Summing over $j$, we get
\begin{equation}\label{eq:poisson_summation}
\begin{split}
\sum_{\ell \in \mathbb{Z}^{n-1}} \langle L_{z,\alpha_0,\alpha_1} \dots & L_{z,\alpha_{m-1},\alpha_m} \tilde{\mathbf{e}}_{a,\ell,p}, \mathbf{e}_{a,\ell,p} \rangle \\ & = \sum_{x_* \in \Fix(F_{\alpha_{0},\dots,\alpha_m})} \frac{h_{a,p}(x_*)}{|\det(I - D F_{\alpha_{0}, \dots,\alpha_m}(x_*))|}.
\end{split}
\end{equation}
Let $y_* = \pi_A(p_{\mathbf{k}}((\alpha_{j \mod m})_{j \in \mathbb{Z}}))$. We will show that $\Fix(F_{\alpha_0,\dots,\alpha_m}) = \set{y_*}$. Recalling the definition of $\tau_{\mathbf{k}}$ in \S \ref{subsection:exact_counting} and the definition of the $\tau_{\alpha,\beta}$'s, we find that $y_* \in \Fix(F_{\alpha_0,\dots,\alpha_m})$. Reciprocally, if $x_* \in \Fix(F_{\alpha_0,\dots,\alpha_m})$, then we find that for every $j \in \mathbb{Z}$ we have $\phi_{t_j}(x_*) \in D_{i(\alpha_{j \mod m})}$ and $\phi_{u_j}(y_*) \in D_{i(\alpha_{j \mod m})}$ where (with periodic indexing of the $\alpha_k$'s)
\begin{equation*}
t_j = \begin{cases} \sum_{k= 0}^{j-1} \tau_{\alpha_{k},\alpha_{k+1}}(F_{\alpha_0,\dots,\alpha_k}(x_*)) & \textup{ if } j \geq 0 \\
                    - \sum_{k= 0}^{-j-1} \tau_{\alpha_{-k-1},\alpha_{-k}}((F_{\alpha_{-k-1},\alpha_{-k}} \circ \dots \circ F_{\alpha_{-1},\alpha_0})^{-1}(x_*))   & \textup{ if }  j < 0. \end{cases}
\end{equation*}
The $u_j$'s are defined similary with $x_*$ replaced by $y_*$. Let $\nu$ be given by Proposition \ref{proposition:expansivity} with $\epsilon = T_0/2$ and recall that we use a Markov partition of size less than $\nu$ and that the $\tau_{\alpha,\beta}$'s take value in $(0,2\kappa) \subseteq (0,\nu)$. For every $j \in \mathbb{Z}$, we have $0 < t_{j+1} - t_j < \nu, 0 < u_{j+1} - u_j < \nu$ and $d(\phi_{t_j}(x_*),\phi_{u_j}(x_*)) < \nu$. Hence, there is $t \in [-T_0/2,T_0/2]$ such that $y_* = \phi_t(x_*)$, but then Lemma \ref{lemma:better_partition} (see also Remark \ref{remark:really_small}) implies that $t =0$ and thus $x_* = y_*$.

Let then $\gamma_{\alpha_0,\dots,\alpha_m}$ denote the periodic orbit of the suspension flow $\tilde{\psi}_{\mathbf{k}}$ of length $\sum_{k= 0}^{m-1} \tau_{\alpha_k,\alpha_{k+1}}\circ F_{\alpha_0,\dots,\alpha_k}(y_*)$ passing through the projection of the point $((\alpha_{k \mod m})_{k \in \mathbb{Z}},0)$. Since the points $y_*, F_{\alpha_0,\alpha_1}(y_*), \dots, F_{\alpha_0,\dots,\alpha_{m-1}}(y_*)$ belong respectively to $R_{\alpha_0,\alpha_1},R_{\alpha_1,\alpha_2},\dots, R_{\alpha_{m-1},\alpha_m}$, we have
\begin{equation*}
\left(\prod_{k = 0}^{m-1} \chi_{\alpha_k,\alpha_{k+1}}(F_{\alpha_0,\dots,\alpha_{k}}(y_*)) \right) = 1.
\end{equation*}
The lengths of $\gamma_{\alpha_0,\dots,\alpha_m}$ and its image by $\rho_{\mathbf{k}}$ are the same and thus:
\begin{equation*}
\sum_{k= 0}^{m-1} \tau_{\alpha_k,\alpha_{k+1}}(F_{\alpha_0,\dots,\alpha_{k}}(y_*)) = T_{\rho_{\mathbf{k}}(\gamma_{\alpha_0,\dots,\alpha_m})}.
\end{equation*}
Since $F_{\alpha_0,\dots,\alpha_m}$ is induced by the flow on a disk transverse to $\rho_{\mathbf{k}}(\gamma_{\alpha_0,\dots,\alpha_m})$, we have
\begin{equation*}
\det(I - D(F_{\alpha_0,\dots,\alpha_m})(y_*)) = \det(I - \mathcal{P}_{\rho_{\mathbf{k}}(\gamma_{\alpha_0,\dots, \alpha_{m}})}).
\end{equation*}
Recalling the definition \eqref{eq:action_fiber}, we find
\begin{equation*}
N_{\alpha_0,\dots,\alpha_m}(y_*) = \left(H_{i(\alpha_0),y_*}^{-1} \Phi_{\rho_{\mathbf{k}}(\gamma_{\alpha_0,\dots,\alpha_m})} H_{i(\alpha_0),y_*} \right)^\top.
\end{equation*}
Hence, summing \eqref{eq:poisson_summation} over $a$ and $p$, we get
\begin{equation*}
\tr(\mathcal{L}_{z,\alpha_0,\alpha_1} \dots \mathcal{L}_{z,\alpha_{m-1},\alpha_m}) = \frac{e^{-zT_{\rho_{\mathbf{k}}(\gamma_{\alpha_0,\dots,\alpha_m})}} \tr(\Phi_{\rho_{\mathbf{k}}(\gamma_{\alpha_0,\dots,\alpha_m})})}{|\det(I - \mathcal{P}_{\rho_{\mathbf{k}}(\gamma_{\alpha_0,\dots, \alpha_{m}})})|.}
\end{equation*}
Here, we used that $\sum_{a \in P_{\alpha_0}} \tilde{\theta}_a(y_*) \theta_a(y_*) = \sum_{a \in P_{\alpha_0}} \theta_a(y_*) = 1$ since $y_*$ belongs to $R_{\alpha_0}$. Summing over all cycles of length $m$ in the graph defining the shift $\Sigma_{\mathbf{k}}$ the result follows. Indeed, the number of different $(\alpha_0,\dots,\alpha_m)$'s that give rise to a given $\gamma \in \mathfrak{P}_{\mathbf{k},m}$ is the minimal period of $(\alpha_{j \mod m})_{j \in \mathbb{Z}}$, which is also $m \frac{T_{\rho_{\mathbf{k}}(\gamma)}}{T^\#_{\rho_{\mathbf{k}}(\gamma)}}$, see Remark \ref{remark:number_antecedents}.
\end{proof}

\begin{lemma}\label{lemma:identification_determinant}
For every $z \in \mathbb{C}$ with $\re z \gg 1$, we have $d_{\mathbf{k}}(z) = \det(I - \mathcal{L}_z)$.
\end{lemma}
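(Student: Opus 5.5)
The plan is to use the standard identity between a Fredholm determinant and the traces of powers of the operator. Since $\mathcal{L}_z$ is trace class (Lemma \ref{lemma:abstract_determinant}), for $z$ such that the spectral radius of $\mathcal{L}_z$ is less than $1$ we have
\begin{equation*}
\det(I - \mathcal{L}_z) = \exp\left( - \sum_{m \geq 1} \frac{1}{m} \tr(\mathcal{L}_z^m)\right),
\end{equation*}
with an absolutely convergent series. Inserting Lemma \ref{lemma:value_trace}, the factor $m$ cancels the $\frac{1}{m}$. Every $\gamma \in \mathfrak{P}_{\mathbf{k}}$ belongs to $\mathfrak{P}_{\mathbf{k},m}$ for exactly one $m$: following Remark \ref{remark:suspension_flow}, orbits are labelled by their length, and distinct $m$ give orbits of distinct lengths. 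Hence $\sum_{m \geq 1} \sum_{\gamma \in \mathfrak{P}_{\mathbf{k},m}} = \sum_{\gamma \in \mathfrak{P}_{\mathbf{k}}}$, and the exponent becomes exactly the one defining $d_{\mathbf{k}}(z)$.

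The main step is to justify this for $\re z \gg 1$. I would first show that the series $\sum_{\gamma \in \mathfrak{P}_{\mathbf{k}}}$ in the definition of $d_{\mathbf{k}}$ converges absolutely when $\re z$ is large. The number of orbits in $\mathfrak{P}_{\mathbf{k},m}$ is at most $(\# I_{\mathbf{k}})^m$. The length $T_{\rho_{\mathbf{k}}(\gamma)}$ is at least $c m$ for some $c > 0$, because $t_{\mathbf{k}}$ is bounded below by a positive constant; this follows from property \ref{item:disjoint} of Definition \ref{definition:markov} and the transversality of the disks. The factors $\tr(\Phi_\gamma)$ and $|\det(I - \mathcal{P}_\gamma)|^{-1}$ are at most $C^m$. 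So the $m$-th block is bounded by $(C')^m e^{-c m \re z}$, and the sum converges once $\re z > c^{-1}\log C'$. The same bound gives $\sum_m \frac{1}{m}|\tr(\mathcal{L}_z^m)| < \infty$.

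Next I need the identity $\log\det(I - \mathcal{L}_z) = -\sum_m \frac{1}{m}\tr(\mathcal{L}_z^m)$ in this range. Rather than bounding the operator norm of $\mathcal{L}_z$ directly, which would require controlling $e^{-z\tau}$ in the anisotropic norms, I would argue through the eigenvalues. By Lidskii's theorem, $\tr(\mathcal{L}_z^m) = \sum_j \lambda_j(z)^m$ and $\det(I - \mathcal{L}_z) = \prod_j (1 - \lambda_j(z))$, where $\lambda_j(z)$ are the eigenvalues of $\mathcal{L}_z$. Because $\sum_m \frac{1}{m}\tr(\mathcal{L}_z^m) w^m$ converges for $|w| < R$ with $R$ large, the function $\log \det(I - w\mathcal{L}_z)$ is holomorphic on $|w| < R$. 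Its Taylor coefficients are $-\frac{1}{m}\tr(\mathcal{L}_z^m)$, since this holds near $w = 0$. The determinant $\det(I - w\mathcal{L}_z)$ has zeros exactly at $w = \lambda_j^{-1}$, so the convergence forces all $|\lambda_j| < 1/R$, and the series identity holds at $w = 1$.

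Both sides of the resulting equality are holomorphic in $z$ on a half-plane $\re z \gg 1$, by Lemma \ref{lemma:abstract_determinant} and the absolute convergence above. So the equality holds there, and $\det(I - \mathcal{L}_z)$ gives the holomorphic continuation. I expect the only delicate point to be the lower bound $t_{\mathbf{k}} \geq c > 0$. If that is not directly available, I would replace it by the fact that the flow length of any $m$-periodic word grows linearly in $m$, using a bounded number of consecutive returns.
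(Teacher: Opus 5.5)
Your proof is correct and follows the paper's argument. You use the identity $\det(I - w\mathcal{L}_z) = \exp(-\sum_m \frac{w^m}{m}\tr(\mathcal{L}_z^m))$ for small $w$, continue it analytically in $w$ past $w = 1$ using the convergence that Lemma \ref{lemma:value_trace} gives when $\re z \gg 1$, and then set $w = 1$ and regroup the sum over $\mathfrak{P}_{\mathbf{k}}$. Your explicit estimate (at most $(\#I_{\mathbf{k}})^m$ cycles, lengths at least $cm$, weights at most $C^m$) supplies the convergence the paper only asserts, and the Lidskii detour is just another phrasing of the same continuation step.
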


\begin{proof}
For every $z \in \mathbb{C}$, if $w \in \mathbb{C}$ is sufficiently small, we have
\begin{equation*}
\det( I - w \mathcal{L}_z) = \exp( - \sum_{m \geq 1} \frac{w^m}{m} \tr(\mathcal{L}_z^m)).
\end{equation*}
It follows from Lemma \ref{lemma:value_trace} that if $\re z \gg 1$ then the series in the right hand side converges whenever $|w| \leq 2$ and thus the equality still holds in this range of $w$ by the analytic continuation principle. Hence, if $\re z \gg 1$, we can take $w = 1$ in the equality above and get
\begin{equation*}
\begin{split}
\det(I - \mathcal{L}_z) & = \exp( - \sum_{m \geq 1} \frac{1}{m} \sum_{\gamma \in \mathfrak{P}_{\mathbf{k},m}} m \frac{\tr(\Phi_{\rho_{\mathbf{k}}(\gamma)})}{|\det(I - \mathcal{P}_{\rho_{\mathbf{k}}(\gamma)})|} e^{- z T_{\rho_{\mathbf{k}}(\gamma)}})\\
       & = \exp( - \sum_{\gamma \in \mathfrak{P}_{\mathbf{k}}} \frac{T_{\rho_{\mathbf{k}}(\gamma)}^{\#}}{T_{\rho_{\mathbf{k}}(\gamma)}} \frac{\tr(\Phi_{\rho_{\mathbf{k}}(\gamma)})}{|\det(I - \mathcal{P}_{\rho_{\mathbf{k}}(\gamma)})|} e^{- z T_{\rho_{\mathbf{k}}(\gamma)}}) = d_ {\mathbf{k}}(z).
\end{split}
\end{equation*}
\end{proof}

We can now end the proof of Lemma \ref{lemma:individual_estimates}, and thus of Theorems \ref{theorem:bound_resonances} and \ref{theorem:bound_determinant}.

\begin{proof}[Proof of Lemma \ref{lemma:individual_estimates}]
This is a consequence of Lemmas \ref{lemma:abstract_determinant} and \ref{lemma:identification_determinant}.
\end{proof}

\appendix

\section{Bound on entire functions}\label{appendix:bound_entire_functions}

We gathered in this appendix some merely technical results on entire functions.

\begin{lemma}\label{lemma:simple_bound}
Let $(\lambda_j)_{j \geq 0}$ be a sequence of complex numbers. Let $\alpha > 0$. Assume that
\begin{equation*}
\# \set{ j \in \mathbb{N} : |\lambda_j| \geq r } \underset{r \to 0}{=} \mathcal{O}(|\log r|^\alpha).
\end{equation*}
Then the infinite product
\begin{equation}\label{eq:little_infinite_product}
\prod_{j \geq 0} (1 - z \lambda_j)
\end{equation}
converges for $z \in \mathbb{C}$ to an entire function $f(z)$ that satisfies
\begin{equation*}
|f(z)| \leq C \exp( C\log(1+|z|)^{1 + \alpha})
\end{equation*}
for some $C > 0$ and every $z \in \mathbb{C}$.
\end{lemma}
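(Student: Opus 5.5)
The plan is to control the counting function $n(t) = \#\set{j : |\lambda_j| \geq 1/t}$ for $t \geq 1$. By assumption, $n(t) \leq C_0 (1 + \log t)^\alpha$ for all $t \geq 1$. Indeed, for $r$ small this is the hypothesis, and for $r$ in a compact range the count is bounded by its value at the smallest such $r$. In particular only finitely many $\lambda_j$ exceed any given threshold in modulus, and $\lambda_j \to 0$.

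\textbf{Convergence.} The convergence of \eqref{eq:little_infinite_product} reduces to showing $\sum_j |\lambda_j| < \infty$. This follows by a dyadic decomposition. The number of $j$ with $2^{-k-1} \leq |\lambda_j| < 2^{-k}$ is at most $C(1+k)^\alpha$, so
\begin{equation*}
\sum_j |\lambda_j| \leq C \sum_{k \geq -k_0} (1+|k|)^\alpha 2^{-k} < \infty .
\end{equation*}
Hence the product converges locally uniformly on $\mathbb{C}$ to an entire function $f$.

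\textbf{Growth bound.} Fix $|z| \geq 2$ and split the indices according to whether $|z\lambda_j| \geq 1/2$ or not.
\begin{itemize}
\item For the large factors, use $|1 - z\lambda_j| \leq 1 + |z||\lambda_j| \leq 3|z\lambda_j|$. Bound each such factor crudely by $3|z|\Lambda$, where $\Lambda = \sup_j |\lambda_j|$. There are at most $n(2|z|) \leq C(1+\log|z|)^\alpha$ such indices, so their contribution is at most $\exp\big(C (1+\log |z|)^\alpha \log(3\Lambda|z|)\big) \leq \exp\big(C \log(1+|z|)^{1+\alpha}\big)$.
\item For the small factors, use $\log|1 - w| \leq |w|$. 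Their contribution is bounded by $\exp\big(|z| \sum_{|\lambda_j| < 1/(2|z|)} |\lambda_j|\big)$.
\end{itemize}

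The remaining task is to show that the small-factor tail is bounded. Using the dyadic count again, with $k_1 \approx \log_2(2|z|)$,
\begin{equation*}
|z| \sum_{|\lambda_j| < 1/(2|z|)} |\lambda_j| \leq |z| \sum_{k \geq k_1} C (1+k)^\alpha 2^{-k} \leq C' |z| (1+k_1)^\alpha 2^{-k_1} \leq C''(1 + \log|z|)^\alpha .
\end{equation*}
This is harmless compared with $\log(1+|z|)^{1+\alpha}$.

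Combining the two parts with a trivial bound for $|z| \leq 2$, where $f$ is continuous on a compact set, gives the claim. The only real care point is the tail sum estimate, which the dyadic summation handles directly; I do not expect a genuine obstacle.
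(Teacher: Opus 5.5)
Your proposal is correct and follows essentially the same route as the paper. Both use a dyadic count to get $\sum_{|\lambda_j| \leq r} |\lambda_j| \leq C r (1 + |\log r|^\alpha)$, which gives convergence and controls the tail, and both split the product at $|\lambda_j| \approx |z|^{-1}$, bounding each of the $\mathcal{O}((1+\log|z|)^\alpha)$ large factors by $C|z|$ and the small ones by $\exp(|z|\sum|\lambda_j|)$; the differences (threshold $1/(2|z|)$ instead of $1/|z|$, the bound $3|z\lambda_j|$ instead of $2A|z|$) are cosmetic.
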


\begin{proof}
Let $C$ be a constant such that 
\begin{equation*}
\# \set{ j \in \mathbb{N} : |\lambda_j| \geq r } \leq C(1 + |\log r|^{\alpha})
\end{equation*}
for every $r > 0$. Now, for $r \in (0,1)$, we have
\begin{equation*}
\begin{split}
\sum_{\substack{j \geq 0 \\ |\lambda_j| \leq r}} |\lambda_j| & = \sum_{k \geq 0} \sum_{\substack{j \geq 0 \\  \frac{r}{2^{k+1}} < |\lambda_j| \leq \frac{r}{2^k}}} |\lambda_j| \leq \sum_{k \geq 0} \frac{C r (1 + |\log(2^{-k-1}r)|^\alpha)}{2^k} \\
       & \leq C r (1 + |\log r|^\alpha),
\end{split}
\end{equation*}
where the constant $C$ may be larger on the second line. In particular, we find that the series $\sum_{j \geq 0} |\lambda_j|$ converges, which proves that the infinite product \eqref{eq:little_infinite_product} converges to an entire function $f(z)$.

Let $A = \sup\limits_{j \geq 0} |\lambda_j|$. For $z \in \mathbb{C}$ with $|z| \geq 1$, we have
\begin{equation*}
\begin{split}
|f(z)| & \leq \prod_{\substack{j \geq 0 \\ |\lambda_j| \leq |z|^{-1}}} (1 + |z||\lambda_j|) \prod_{\substack{j \geq 0 \\ |\lambda_j| > |z|^{-1}}} (1 + |z||\lambda_j|) \\
    & \leq \exp(|z| \sum_{\substack{j \geq 0 \\ |\lambda_j| \leq |z|^{-1}}} |\lambda_j|) (2 A |z|)^{\# \set{j \geq 0 : |\lambda_j| > |z|^{-1}}}\\
    & \leq \exp(C(1 + |\log z |^\alpha + C (1 + |\log z|^\alpha)|\log(2A |z|)|)).
\end{split}
\end{equation*}
The result follows.
\end{proof}

\begin{lemma}\label{lemma:bound_entire_function}
Let $f$ and $g$ be two entire functions. Let $\alpha > 0$. Assume that there is a constant $C > 0$ such that for every $z \in \mathbb{C}$ we have
\begin{equation}\label{eq:almost_finite_order}
|f(z)| \leq C \exp(C |z|^\alpha) \textup{ and } |g(z)| \leq C \exp(C |z|^\alpha).
\end{equation}
Assume that $g$ is non-identically equal to $0$ and that the function $f/g$ is entire (i.e. that the zeros of $g$ are contained in the zeros of $f$). Then 
\begin{equation}\label{eq:general_bound_zeros}
\#\set{ z \in \mathbb{C}: z \textup{ zero of } f/g \textup{ and } |z| \leq r} \underset{r \to + \infty}{=} \mathcal{O}(r^\alpha),
\end{equation}
where the zeros are counted with multiplicities. Moreover, there is a constant $C_0 > 0$ such that for every $z \in \mathbb{C}$, we have
\begin{equation*}
\left| \frac{f(z)}{g(z)}\right| \leq \begin{cases} C_0 \exp(C_0 |z|^\alpha) & \textup{ if } \alpha \textup{ is not an integer },\\
C_0 \exp(C_0 |z|^\alpha\log(1+|z|)) & \textup{ if } \alpha \textup{ is an integer.} \end{cases}
\end{equation*}
\end{lemma}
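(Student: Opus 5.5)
We plan to handle the zeros first, then the growth of $h = f/g$ through a Hadamard factorization.

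\emph{Step 1: counting zeros.} Since $g \not\equiv 0$, we may translate $z$ (which only changes constants in \eqref{eq:almost_finite_order}) and assume $g(0) \neq 0$; replacing $f,g$ by $f/z^k, g/z^k$ if needed, we may also assume $f(0)\neq 0$. Let $n_f(r)$ and $n_g(r)$ count the zeros of $f$ and $g$ in the disk of radius $r$. Jensen's formula together with \eqref{eq:almost_finite_order} gives $n_f(r) \le \frac{1}{\log 2}\int_r^{2r} \frac{n_f(t)}{t}\,\mathrm{d}t \le C(2r)^\alpha + C'$, and similarly for $g$. The zeros of $h$ are those of $f$ with the multiplicities of the zeros of $g$ removed, so their number in the disk of radius $r$ is at most $n_f(r) = \mathcal{O}(r^\alpha)$. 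This proves \eqref{eq:general_bound_zeros}.

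\emph{Step 2: the order of $h$.} Nevanlinna theory gives $T(r,h) \le T(r,f) + T(r,1/g) = T(r,f)+T(r,g) + \mathcal{O}(1) = \mathcal{O}(r^\alpha)$, by the first main theorem. Since $h$ is entire, $\log M(r,h) \le 3T(2r,h) = \mathcal{O}(r^\alpha)$. In particular $h$ has order at most $\alpha$, so Hadamard's theorem applies: $h(z) = e^{P(z)} z^k \prod_j E_q(z/a_j)$, where $q = \lfloor \alpha \rfloor$ (or the genus), $\deg P \le \lfloor \alpha\rfloor$, and $(a_j)$ are the zeros of $h$.

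A more elementary alternative for this step is a Borel--Carathéodory argument on $\log h$ after dividing out the zeros, but the Nevanlinna route is cleanest. Note that $\log M(r,h) = \mathcal{O}(r^\alpha)$ already gives the non-integer case directly, up to constants, since $M(r,h) \le C\exp(Cr^\alpha)$ for large $r$ and $h$ is bounded on compact sets. Hence the claimed bound holds, with the $\log$ factor being superfluous even in the integer case.

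\emph{Expected obstacle.} The only delicate point is justifying $\log M(r,h) \lesssim T(2r,h)$ together with $T(r,1/g) = T(r,g)+\mathcal{O}(1)$. Both are standard, but the second requires $g(0)\neq 0$, which Step 1 arranged. If one prefers to avoid Nevanlinna theory, one can bound the canonical products $\Pi_f$ and $\Pi_g$ using $n(r) = \mathcal{O}(r^\alpha)$. The standard estimate $\log|E_q(u)| \lesssim \min(|u|^{q},|u|^{q+1})$, summed against $n(t)$, gives $|z|^\alpha$ when $\alpha \notin \mathbb{Z}$ and $|z|^\alpha\log|z|$ when $\alpha \in \mathbb{Z}$; this is where the logarithm in the statement arises. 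One then controls $e^{P}$ by a Borel--Carathéodory bound on $\log(h/\Pi_h)$, using a lower bound on $|\Pi_g|$ off small exceptional disks (Cartan's lemma). Since the paper's statement carries the log factor, its author likely followed this second route, and the main work is that lower bound on $|\Pi_g|$.
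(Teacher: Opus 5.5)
Your argument is correct, but it takes a different route from the paper.

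The paper never estimates the characteristic of $h=f/g$. It applies the Hadamard factorization to $f$ and to $g$ separately, with the common genus $p=\lfloor\alpha\rfloor$. The factors attached to common zeros then cancel exactly, giving $f/g=e^{P-Q}z^{m-m'}\prod_j W_p(z/\tilde\lambda_j)$ with $\deg(P-Q)\le p\le\alpha$. The exponential factor is trivially bounded. The only work is a dyadic estimate of the canonical product over the surviving zeros, using $\tilde n(r)=\mathcal{O}(r^\alpha)$ obtained from Jensen's formula exactly as in your Step 1. The factor $\log(1+|z|)$ enters through $\sum_{t\le T}2^{t(\alpha-p)}=T+1$ when $\alpha=p$. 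So your guess that a factorization proof must go through Borel--Carath\'eodory and a Cartan-type lower bound on $|\Pi_g|$ is off: factoring $f$ and $g$ individually makes the polynomial part free.

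Your route is $m(R,h)\le m(R,f)+m(R,g)-\log|g(0)|$, which only uses $\log^+|f/g|\le\log^+|f|+\log^+|g|-\log|g|$ and Jensen's inequality $\frac{1}{2\pi}\int_0^{2\pi}\log|g(Re^{i\theta})|\,\mathrm{d}\theta\ge\log|g(0)|$. This is followed by the Poisson bound $\log M(r,h)\le\frac{R+r}{R-r}\,m(R,h)$ with $R=2r$. It is shorter and needs no factorization at all; the appeal to Hadamard in your Step 2 is superfluous. It also gives a strictly stronger conclusion: $|f/g|\le C_0\exp(C_0|z|^\alpha)$ holds for integer $\alpha$ too.

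The paper's loss is inherent to its estimate. That estimate uses only the counting function and puts absolute values on $\sum_{|\tilde\lambda_j|\le 2|z|}(z/\tilde\lambda_j)^p$. At integer order this information cannot give finite type (Lindel\"of's theorem; $1/\Gamma$ has $n(r)\sim r$ but $\log M(r)\sim r\log r$). Your argument therefore confirms the author's suspicion that the logarithm in Theorem~\ref{theorem:bound_determinant} is an artifact.

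One small repair in Step 1: once $g(0)\neq0$, you cannot also divide $g$ by $z^k$. Instead, translate to a point where $fg\neq0$ (assuming $f\not\equiv0$, as the statement implicitly does), or divide only $f$ by $z^m$ before applying Jensen's formula, as the paper does.
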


\begin{proof}
Let $p$ be the integer part of $\alpha$. The orders of $f$ and $g$ are less than $\alpha$ and thus we may apply the Weierstrass Factorization Theorem to $f$ and $g$, which gives:
\begin{equation*}
f(z) = e^{P(z)} z^m \prod_{j \geq 1} W_p(\frac{z}{\lambda_j}) \textup{ and } g(z) = e^{Q(z)} z^{m'} \prod_{j \geq 1} W_p(\frac{z}{\mu_j})
\end{equation*}
where $P$ and $Q$ are polynomials of degree at most $p$, the numbers $m$ and $m'$ are the multiplicity of $0$ as a zero of $f$ and $g$ respectively, $(\lambda_j)_{j \geq 1}$ and $(\mu_{j})_{j \geq 1}$ are the non-zero zeroes of $f$ and $g$ respectively, and the factor $W_p$ is defined by
\begin{equation*}
W_p(z) = (1 - z) \exp\left(\sum_{k = 1}^p \frac{z^k}{k}\right) = \exp\left(- \sum_{k = p +1}^{+ \infty} \frac{z^k}{k}\right).
\end{equation*}
The last expression is only valid for $|z| < 1$. Let $(\tilde{\lambda}_j)_{j \geq 1}$ be the sequence $(\lambda_j)_{j \geq 1}$ from which we removed the $\mu_j$'s. We have then that
\begin{equation}\label{eq:f_sur_g}
\frac{f(z)}{g(z)} = e^{P(z) - Q(z)} z^{m - m'} \prod_{j \geq 1} W_p(\frac{z}{\tilde{\lambda}_j}).
\end{equation}
To estimate this product, let us introduce:
\begin{equation*}
n(r) = \# \set{ j \geq 1 : |\lambda_j| \leq r} \textup{ and } \widetilde{n}(r) =\set{ j \geq 1 : |\tilde{\lambda}_j| \leq r}.
\end{equation*}
Introduce also $\tilde{f}(z) = z^{-m} f(z)$. Since $\tilde{f}(0) \neq 0$, Jensen's formula gives
\begin{equation*}
\log |\tilde{f}(0)| + \int_{0}^{r} \frac{n(t)}{t}\mathrm{d}t = \frac{1}{2 \pi} \int_0^{2 \pi} \log |\tilde{f}(r e^{i \theta})| \mathrm{d}\theta.
\end{equation*}
Thus, we have
\begin{equation*}
n(r) \leq 2 \int_{r}^{2r} \frac{n(t)}{t} \mathrm{d}t \leq - 2 \log |\tilde{f}(0)| + \frac{1}{\pi} \int_0^{2 \pi} \log |\tilde{f}(r e^{i \theta})| \mathrm{d}\theta.
\end{equation*}
We deduce then from \eqref{eq:almost_finite_order} that $n(r) \underset{r \to + \infty}{=} \mathcal{O}(r^{\alpha})$ and thus that $\widetilde{n}(r) \underset{r \to + \infty}{=} \mathcal{O}(r^{\alpha})$. We just proved \eqref{eq:general_bound_zeros}.

Notice that, in order to obtain the bound that we announced on $f/g$, we only need to estimate the product $\prod_{j \geq 1} W_p(\tilde{\lambda}_j^{-1} z)$ (the other factors clearly satisfy this bound). Assume first that $\alpha$ is not an integer. Let us notice that there is a constant $C$ such that
\begin{equation*}
|W_p(z)| \leq \exp(C |z|^p) \textup{ for every } z \in  \mathbb{C} \setminus \mathbb{D}(0,\frac{1}{2})
\end{equation*}
and
\begin{equation*}
|W_p(z)| \leq \exp(C |z|^{p+1}) \textup{ for every } z \in  \mathbb{D}(0,\frac{1}{2}).
\end{equation*}
We can split the product accordingly. We estimate first
\begin{equation*}
\left| \prod_{\substack{j \geq 1 \\ |\tilde{\lambda}_j| \leq 2|z|}} W_p(\tilde{\lambda}_j^{-1} z) \right| \leq \exp \left(C |z|^p\sum_{\substack{j \geq 1 \\ |\tilde{\lambda}_j| \leq 2|z|}} |\tilde{\lambda}_j|^{-p}\right).
\end{equation*}
To bound the sum in the exponential, let us consider the smallest integer $T$ such that $2^{T+1} \geq 2 |z|$. Then, we have
\begin{equation}\label{eq:packets}
\begin{split}
\sum_{\substack{j \geq 1 \\ |\tilde{\lambda}_j| \leq 2|z|}} |\tilde{\lambda}_j|^{-p} & \leq \sum_{\substack{j \geq 1 \\ 0 \leq |\tilde{\lambda}_j| \leq 1}} |\tilde{\lambda}_j|^{-p} + \sum_{t = 0}^T \sum_{\substack{j \geq 1 \\ 2^t \leq |\tilde{\lambda}_j| \leq 2^{t+1}}} |\tilde{\lambda}_j|^{-p} \\
& \leq C + \sum_{t = 0}^T 2^{-tp} \widetilde{n}(2^{t+1}) \leq C(1 + \sum_{t =0}^T 2^{t(\alpha - p)}) \leq C 2^{T(\alpha - p)} \\
& \leq C |z|^{\alpha -p},
\end{split}
\end{equation}
where the constant $C$ may change from one line to another and within the second line, but never depends on $|z|$. Here, we used that $\alpha > p$ since $\alpha$ is not an integer. We also assumed in the last line that $|z| \geq 1$. Hence, we find, with a new constant $C$, that
\begin{equation}\label{eq:small_lambda}
\left| \prod_{\substack{j \geq 1 \\ |\tilde{\lambda}_j| \leq 2|z|}} W_p(\tilde{\lambda}_j^{-1} z) \right| \leq \exp(C |z|^{\alpha})
\end{equation}
for $|z|$ large. We deal now with the other factors:
\begin{equation*}
\left| \prod_{\substack{j \geq 1 \\ |\tilde{\lambda}_j| > 2|z|}} W_p(\tilde{\lambda}_j^{-1} z) \right| \leq \exp(C |z|^{p+1} \sum_{\substack{j \geq 1 \\ |\tilde{\lambda}_j| > 2|z|}} |\tilde{\lambda}_j|^{-p-1}).
\end{equation*}
Summing by packets as above, we find that 
\begin{equation*}
\sum_{\substack{j \geq 1 \\ |\tilde{\lambda}_j| > 2|z|}} |\tilde{\lambda}_j|^{-p-1} \underset{|z| \to + \infty}{=} \mathcal{O}(|z|^{\alpha - p - 1})
\end{equation*}
and it follows, with a new constant $C$ and for $|z|$ large that
\begin{equation}\label{eq:large_lambda}
\left| \prod_{\substack{j \geq 1 \\ |\tilde{\lambda}_j| > 2|z|}} W_p(\tilde{\lambda}_j^{-1} z) \right| \leq \exp(C |z|^{\alpha}).
\end{equation}

Putting our two estimates \eqref{eq:small_lambda} and \eqref{eq:large_lambda} together and recalling \eqref{eq:f_sur_g}, the result is proven when $\alpha$ is not an integer. When it is an integer, we do the same computation, but we see in the estimate for the first factor (corresponding to small $\tilde{\lambda}_j$) that a factor $\log |z|$ appears. Indeed, we have $\alpha = p$ and thus in \eqref{eq:packets} we get $\sum_{t = 0}^T 2^t(\alpha-p)= T+1$ and the order of magnitude of $T$ is $\log(1 + |z|)$. Hence, in \eqref{eq:small_lambda} the factor $|z|^\alpha$ in the exponential is replaced by $|z|^\alpha\log(1+|z|)$.
\end{proof}

\bibliographystyle{alpha}
\bibliography{zeta_gevrey.bib}

\end{document}